    \newtheorem{thm}{Theorem}[section]%
    \newtheorem{prop}[thm]{Proposition}%
    \newtheorem{cor}[thm]{Corollary}%
    \newtheorem{lem}[thm]{Lemma}%
    \newtheorem{defn}[thm]{Definition}%
    \newtheorem{rem}[thm]{Remark}%
    \numberwithin{equation}{section}
\begin{document}
    \begin{center}
		   {\bf Construction of solutions of nonlinear irregular singular differential equations by 
		   		Borel summable functions and an application to Painlev\'{e} equations }\vspace{5mm} \\
	  		{Sunao {\sc \={O}uchi}}
			\footnote{Sophia Univ. Tokyo Japan, e-mail s\_ouchi@sophia.ac.jp} 
		\end{center} 
	\begin{abstract}
			A system of nonlinear differential equations 
			$x^{1+\gamma}\frac{dY}{dx}= F_0(x)+A(x)Y+F(x,Y)$ is considered. 
			The origin $x=0$ is irregular singular. There exist pioneering works about them.
			We study more precisely than preceding works, 
			the meaning of asymptotic expansion of transformations and solutions by using Borel 
			summable functions in asymptotic analysis, and apply results to Painlev\'{e} equations. 		 
			\footnote {Key Words and Phrases:	Irregular singular, Borel summable, transseries,\par 
			\quad Painlev\'{e} equation	\par 
			{2020 Mathematical Classification Numbers. 34M30; Secondary  34M04, 34M55}} 
	\end{abstract}
	\section{\normalsize Introduction}
			The main purpose of the present paper is to construct solutions of the following system 
			of nonlinear differential equations with irregular singularity at $x=0$,  
		\begin{equation}\left \{
				\begin{aligned}
				Y=&{}^t(y_1.y_2,\cdots,y_n) \\ 	
				x^{1+\gamma}\frac{dY}{dx}=& F_0(x)+A(x)Y+F(x,Y)
				\end{aligned} \right . \label{Eq-00}
		\end{equation}
 					There are pioneering researches about \eqref{Eq-00} by Hukuhara \cite{Huk-1}, Malmquist 
 				\cite{Malm-1}, Trjitzinsky \cite{Trj} Iwano \cite{Iwano-1} \cite{Iwano-2} and 
				many other important ones. 
    	  They constructed formal solutions and showed the existence 
				of genuine solutions under some conditions. The theory of multisummable 
				functions in asymptotic analysis has been  developed after their studies (see Balser \cite{Bal}). 
				Borel summability is a special case of multisummability. It was not used in the pioneering 
				researches. For nonlinear equations it is shown  that formal power series 
				solutions of ordinary differential equations are multisummable in Braaksma \cite{Braak} and 
				those of some class of partial differential equations	are multisummable in \={O}uchi \cite{Ouchi}. 
					Let us return to a classical important result due to Malmquist \cite{Malm-1}.
		    Let $\Lambda =\{\lambda_1,\cdots, \lambda_{n}\}$ be the set of eigenvalues of $A(0)$
		    and assume they are distinct. Let $\Lambda' =\{\lambda_1,\cdots, \lambda_{n'}\}$ and 
			  $\Lambda'' =\{\lambda_{n'+1},\cdots, \lambda_{n}\}$. $\Lambda'$ and $\Lambda''$ are separated by a 
			  straight line through the origin in the complex plane. It is shown in \cite{Malm-1} that 
		    there exists an $n'$-parameter family of solutions in some sector corresponding to 
		    $\Lambda'$. It is the main purpose that we try to have another look at this result, by
		    applying theory of Borel summable functions in asymptotic analysis. 
		    We construct transformations and solutions more precisely and clearly in a function class with 
		    some Gevrey type estimates. 
		    For $\gamma=1$  (rank 1) equation \eqref{Eq-00} was treted in Costin,O \cite{Cos-1}, 
		    Costin,O, Costin,R.D \cite{Cos-2} and Braaksma,Kuik \cite{Bra-Kuik} in a different way from 
		    that in the present paper. The structure of solutions were studied there,  
		    by applying the resurgence theory due to \'{E}calle. We use only the elementary properties of 
		    Borel summable functions in this paper. 
		    \par
		    In secton 1 we sum up shortly what we need about Borel summable functions. 
			  In section 2 we study case $F_0(x)\equiv 0$ and give one of main results (Theorem \ref{main-th})
	      that is construction of exponential series solutions called often {\it transseries} solutions. 
		     In section 3 we give the proof of Theorem \ref{main-th} and remarks.
		     In section 4 we study case $F_0(x)\not \equiv 0$, by reducing to the former case. 
		     In section 5 we apply the results to Painlev\'{e} II, IV equations as examples. In section 6
		     we give the proof of Proposition \ref{prop-1} concerning diagonalization of linear systems.    
         The main results in this paper are obtained under the condition that 
         the eigenvalues of matrix $A(0)$ are distinct. 
         It will be studied in another paper for the case multiple eigenvalues of matrix $A(0)$ 
         appear. 
\section{\normalsize Borel summable functions with holomorphic parameters}
			We introduce some notations and definitions.
			Let $I=(\alpha,\beta)$ be an open interval and $\widetilde{\mathbb{C}}_{\{0\}}$ be 
			the universal covering
			space of $\mathbb{C}-\{0\}$. 
			$S(I)=S(\alpha,\beta)=\{x\in \widetilde{\mathbb{C}}_{\{0\}}; \arg x\in I\}$. 
			$S_{0}(I)=S_0(\alpha,\beta)=\{x\in S(I); 0<|x|<\rho(\arg x)\}$,
			where $\rho(t)$ is some positive continuous function on $I$. The same notation 
			$S_{0}(\cdot)$ is	used for various $\rho(\cdot)$. 
			 For arbitrary small $\epsilon>0$, $I_{\epsilon}=
			(\alpha+\epsilon, \beta-\epsilon)\subset I$.  
	 		${\mathscr O}(U)$ is the set of holomorphic functions on a domain $U$. 
	 		${\mathbb C}[[x]]$ is the set of formal power series of $x$. ${\mathbb N}$ is the 
	 		set of nonnegative integers and ${\mathbb Z}$ is the 
	 		set of integers. Let $k=(k_1,\cdots,k_n)\in  {\mathbb N}^n$ and 
	 		$Y=(y_1.\cdots,y_n)\in {\mathbb C}^n$. Then we use notations $k!=k_1!k_2!\cdots k_n!$,
	 		$|k|=\sum_{i=1}^{n}k_i$, 
	 		$Y^{k}=y_1^{k_1}\cdots y_n^{k_n}$, $|Y|=\max_{1\leq i\leq n}|y_i|$ 	
	 		and	$(\frac{\partial}{\partial Y})^k=\prod_{i=1}^{n}(\frac{\partial}{\partial y_i})^{k_i}$.	
	\begin{defn} Let $\kappa>0$, $I=(\alpha,\beta)$ with $\beta-\alpha>\pi/\kappa$ and 
		$\Omega=\{Y\in {\mathbb C}^n;	|Y|<R \}$. A function 
		 $f(x,Y)\in {\mathscr O}(S_{0}(I)\times \Omega)$ is said to be
		$\kappa$-Borel summable with respect to $x$, 
		if there exist constants $M$, $C$ and
		$\{a_n(y)\}_{n=0}^{\infty}\subset {\mathscr O}(\Omega)$ such that
		for any $N\geq 0$
	\begin{equation}
			|f(x,Y)-\sum_{n=0}^{N-1}a_n(Y)x^n|\leq 
			MC^N|x|^N{\Gamma(\frac{N}{\kappa}+1)}\quad 
			(x,Y)\in S_{0}(I)\times \Omega
	\end{equation}
			holds. 
			The totality of $\kappa$-Borel summable functions with respect to $x$ on 
			$S_{0}(I)\times \Omega$ is denoted by 
			$\mathscr O_{\{1/\kappa\}}(S_{0}(I)\times \Omega)$. \par					
			We say that $f(x,Y)$ is $\kappa$-Borel summable  
			in a direction $\theta$, if there exists $\delta>\pi/(2\kappa)$ such that
			$f(x,Y) \in \mathscr O_{\{1/\kappa\}}
			(S_{0}(\theta-\delta,\theta+\delta)\times \Omega)$.  		
	\end{defn} 
	  The notion of Borel summability is originally used for formal power series.  
		\begin{defn}
	  Let $\widetilde{f}(x)=\sum_{n=0}^{\infty}a_n x^n \in \mathbb{C}[[x]]$. $\widetilde{f}(x)$ is said to be
		$\kappa$-Borel summable in a direction $\theta$, if there exists 
		$f(x)\in {\mathscr O}_{\{1/\kappa\}}(S_{0}(I))$,
		 $I=(\theta-\delta,\theta+\delta)\;\; \delta>\pi/2\kappa$, such that
		 for any $N\geq 0$
	\begin{equation}
			|f(x)-\sum_{n=0}^{N-1}a_n x^n|\leq 
			MC^N|x|^N{\Gamma(\frac{N}{\kappa}+1)}\quad x \in S_{0}(I)
	\end{equation}
	holds.  If $f(x)$ exists, then it is unique. Hence $\widetilde{f}(x)$ and $f(x)$ are often identified.
	\end{defn}
	     
	      Let $I=(\alpha,\beta)$ with $\beta-\alpha>\pi/\kappa$, $\theta=(\alpha+\beta)/2$
	      and $\delta=(\beta-\alpha)/2$. Then $I=(\theta-\delta,\theta+\delta)$ and $\delta>\pi/(2\kappa)$.
	      Let  $\psi(x,Y)\in \mathscr{O}(S_0(I)\times \Omega)$ and $|\psi(x,Y)|\leq C|x|^c (c>0)$.  
				$\kappa$-Borel transform of $\psi(x,Y)$  is defined by
			\begin{equation}
				\begin{aligned}
					(\frak{B}_{\kappa}\psi)(\xi,Y)= \frac{1}{2\pi i} \int_{\mathcal C}\exp(\frac{\xi}{x})^{\kappa} 
					\psi(x,Y)dx^{-\kappa},
				\end{aligned}%
			\end{equation}
				where ${\mathcal C}$ is a contour in $S_0(I)$ that starts from $0e^{i(\theta+\delta')}$ to 
				$r_0e^{i(\theta+\delta')}$ on a segment and next on an arc $|t|=r_0$ to  $r_0e^{i(\theta-\delta'')}$ 
				and finally on a segment ends at 
				$0e^{i(\theta-\delta'')}$\; ($\delta>\delta',\delta''>\pi/(2\kappa)$). \par 		
				We denote $(\frak{B}_{\kappa}\psi)(\xi,Y)$ by $\widehat{\psi}(\xi,Y)$.				
				Let $\widehat{\alpha}(\kappa)=\alpha+\pi/2\kappa$,  
				$\widehat{\beta}(\kappa)=\beta-\pi/2\kappa$ and 
				$\widehat{I}({k})=(\widehat{\alpha}(k),\widehat{\beta}(k))$.
				Then $\widehat{\psi}(\xi,Y)$ is holomorphic in an infinite sector $S(\widehat{I}(k))$ 
				with respect $\xi$. 
						We can construct $\psi(x,Y)$ by the $\kappa$-Laplace transform 
				${\frak L}_{\kappa}\widehat{\psi} $, 
				that is, $\psi(x,Y)=({\frak L}_{\kappa}\widehat{\psi})(x,Y)$
		\begin{equation}
			\begin{aligned}\ &
				 ({\mathcal L}_{\kappa}\widehat{\psi})(x,Y)=
				 \int_{0}^{\infty e^{\sqrt{-1}\theta}}e^{-(\frac{\xi}{x})^{k}}
			     \widehat{\psi}(\xi,Y)d\xi^{\kappa} \quad \theta\in {\widehat{I}(\kappa)}. \label{L}
			\end{aligned}
		\end{equation}
				If $f(x,Y)\in {\mathscr O}_{\{1/k\}}(S_{0}(I)\times \Omega)$ with $a_0(Y)=0$, then it 
				is known that there exists $r>0$ such that 
		\begin{equation}
			\widehat{f}(\xi,Y)=\sum_{n=1}^{\infty}\frac{a_n(Y)}{\Gamma(n/\kappa)}\xi^{n-\kappa} \label{B-tr}
		\end{equation}
			holds in $\{0<|\xi|<r\}\times \Omega$. $\xi^{\kappa-1}\widehat{f}(\xi,Y)$ is holomorphic 
			in $\{|\xi|<r\}\times \Omega$. 
		 It holds that for any small $\epsilon>0$  		
		\begin{equation}
			\begin{aligned}
			\ &|\widehat{f}(\xi,Y)|\leq  \frac{K_{\epsilon}|\xi|^{1-\kappa}
			e^{c_{\epsilon}|\xi|^{\kappa}}}{\Gamma(1/\kappa)}
				\quad (\xi,Y)\in \big(\{0<|\xi|<r\}\cup 
				S_{\widehat{I}_{\epsilon}(\kappa)}\big)\times \Omega.  
			\end{aligned} \label{B-est}  
		\end{equation}
			As for the details of Borel summable functions, Borel transform and 
			Laplace transform we refer to Balser \cite{Bal}. 
			By expanding $\widehat{f}(\xi,Y)$ at $Y=0 \in \mathbb C^{n}$,
		\begin{equation}
			\begin{aligned}
			\widehat{f}(\xi,Y)=\sum_{k\in \mathbb N^{n}}\widehat{f}_k(\xi)Y^k,\quad 
			\widehat{f}_k(\xi)=\frac{1}{k!}(\frac{\partial}{\partial Y})^k\widehat{f}(\xi,0)
			\end{aligned}
		\end{equation}
			and Cauchy's inequality, the following holds.
		\begin{lem}	
			Let $f(x,Y)\in \mathscr O_{\{1/\kappa\}}(S_{0}(I)\times \Omega)$ with $f(0,Y)=0$.
			Then 			
				\begin{equation}
		|\widehat{f}_k(\xi)|\leq 
			 \frac{K_{\epsilon}|\xi|^{1-\kappa} e^{c_{\epsilon}|\xi|^{\kappa}}}{R^{|k|}\Gamma(1/\kappa)}\quad  
			 \xi \in \{0<|\xi|<r\}\cup S_{\widehat{I}_{\epsilon}(\kappa)}. 
		\end{equation}
				 \label{lem-Cau}
		\end{lem}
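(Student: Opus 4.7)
The plan is to apply the multivariable Cauchy inequality in the $Y$ variable to the Taylor expansion
\[
\widehat{f}(\xi,Y)=\sum_{k\in\mathbb{N}^n}\widehat{f}_k(\xi)Y^k
\]
displayed just above the lemma, and then to plug in the uniform bound \eqref{B-est} for $\widehat{f}(\xi,Y)$. Observe first that since $|Y|=\max_i|y_i|$, the set $\Omega=\{|Y|<R\}$ is a polydisc of polyradius $(R,\ldots,R)$, and for each fixed $\xi$ in $\{0<|\xi|<r\}\cup S_{\widehat{I}_\epsilon(\kappa)}$ the function $Y\mapsto\widehat{f}(\xi,Y)$ is holomorphic on $\Omega$ (this holomorphy is inherited from that of $f(x,Y)$ in $Y$ through the contour integral defining $\frak{B}_\kappa$). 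Hence, for any $R'<R$, the Cauchy integral formula on the distinguished boundary $\{|y_i|=R'\}$ gives
\[
\widehat{f}_k(\xi)=\frac{1}{(2\pi i)^n}\oint\cdots\oint
\frac{\widehat{f}(\xi,Y)}{y_1^{k_1+1}\cdots y_n^{k_n+1}}\,dy_1\cdots dy_n,
\]
and standard estimation yields
\[
|\widehat{f}_k(\xi)|\leq\frac{\sup_{|y_i|=R'}|\widehat{f}(\xi,Y)|}{(R')^{|k|}}.
\]

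Now insert the estimate \eqref{B-est}, which is uniform in $Y\in\Omega$ and depends only on $\xi$, to obtain
\[
|\widehat{f}_k(\xi)|\leq\frac{K_\epsilon|\xi|^{1-\kappa}e^{c_\epsilon|\xi|^\kappa}}{(R')^{|k|}\,\Gamma(1/\kappa)}
\]
for all $R'<R$. Letting $R'\nearrow R$ yields the claimed bound on the stated $\xi$-domain. There is essentially no obstacle: the only point requiring a moment's care is the passage $R'\to R$, which is justified because the bound \eqref{B-est} holds uniformly over the open polydisc $\Omega$, so the supremum on $\{|y_i|=R'\}$ stays bounded by the same right-hand side independently of $R'$. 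The $\xi$-region of validity is unchanged since it is exactly the region where \eqref{B-est} is assumed.
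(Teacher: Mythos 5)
Your argument is exactly the one the paper intends: it states the lemma immediately after displaying the Taylor expansion $\widehat{f}(\xi,Y)=\sum_k\widehat{f}_k(\xi)Y^k$ and invokes ``Cauchy's inequality'' together with \eqref{B-est}, which is precisely the polydisc Cauchy estimate you carry out. Your additional care about the holomorphy of $Y\mapsto\widehat{f}(\xi,Y)$ and the limit $R'\nearrow R$ just fills in the details the paper leaves implicit, so the proposal is correct and essentially identical in approach.
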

		\begin{prop}	
				Let $\{f_k(x)\}_{k\in {\mathbb N}^n} \subset \mathscr O_{\{1/\kappa\}}(S_{0}(I))$ 
				with $f_k(0)=0$. Suppose that their $\kappa$- 
				Borel transforms $\{\widehat{f}_k(\xi)\}_{k\in {\mathbb N}^n}$ have bounds for any small 
				$\epsilon>0$			
			\begin{equation}
				\begin{aligned}
				|\widehat{f}_k(\xi)|\leq 
			 \frac{K_{\epsilon}|\xi|^{1-\kappa} e^{c_{\epsilon}|\xi|^{\kappa}}}{R^{|k|}\Gamma(1/\kappa)}\quad  
			 \xi \in \{0<|\xi|<r\}\cup S_{\widehat{I}_{\epsilon}(\kappa)}.
				\end{aligned}
			\end{equation}
			 Then
				$f(x,Y)=\sum_{k\in \mathbb N^{n}}f_k(x)Y^k \in \mathscr O_{\{1/\kappa\}}(S_{0}(I_{\epsilon})\times 
				\Omega_0)$, $\Omega_0=\{Y\in {\mathbb C}^n;|Y|<R_0 \}$ $(R_0<R)$.  \label{prop-Borel}
		\end{prop}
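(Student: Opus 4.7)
The plan is to assemble the Borel side $\widehat f(\xi,Y):=\sum_{k\in\mathbb N^{n}}\widehat f_k(\xi)Y^{k}$ and recover $f(x,Y)$ as its $\kappa$-Laplace transform, so that the Gevrey estimate for $f$ comes in one stroke from the standard argument behind \eqref{B-est} rather than being pieced together term by term.

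First, I would verify that $\widehat f$ is well defined with a bound in the shape of \eqref{B-est}. Fix $R_{0}<R$ and $Y\in\Omega_{0}$. Since $|Y^{k}|\le|Y|^{|k|}\le R_{0}^{|k|}$, the hypothesis on $\widehat f_k$ gives
$$
\sum_{k\in\mathbb N^n}\bigl|\widehat f_k(\xi)\bigr|\,|Y^k|\le \frac{K_\epsilon|\xi|^{1-\kappa}e^{c_\epsilon|\xi|^{\kappa}}}{\Gamma(1/\kappa)}\sum_{k\in\mathbb N^n}(R_{0}/R)^{|k|}=\frac{K_\epsilon|\xi|^{1-\kappa}e^{c_\epsilon|\xi|^{\kappa}}}{(1-R_{0}/R)^{n}\,\Gamma(1/\kappa)},
$$
so $\widehat f(\xi,Y)$ is holomorphic on $(\{0<|\xi|<r\}\cup S_{\widehat I_{\epsilon}(\kappa)})\times\Omega_{0}$, $\xi^{\kappa-1}\widehat f$ extends holomorphically to $\xi=0$, and the Borel-side bound has exactly the form \eqref{B-est} with a $Y$-independent constant. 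Expanding at $\xi=0$ as in \eqref{B-tr} produces coefficients $a_n(Y)=\sum_k a_{k,n}Y^k$ that are holomorphic on $\Omega_{0}$ by Cauchy's inequality.

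Next, choose $|x|$ small enough that $\mathrm{Re}\,(\xi/x)^{\kappa}$ dominates $c_{\epsilon}|\xi|^{\kappa}$ along $\arg\xi=\theta\in\widehat I_{\epsilon}(\kappa)$, and set $f(x,Y):=(\mathcal L_{\kappa}\widehat f)(x,Y)$ through \eqref{L}. Absolute convergence of the integral together with uniform convergence of the series in $k$ on the contour justifies Fubini, giving $f(x,Y)=\sum_{k}f_k(x)Y^{k}$, so this definition agrees with the one in the statement. The Gevrey estimate is then the standard Laplace-transform argument applied to $\widehat f$: writing $e^{-(\xi/x)^{\kappa}}=\sum_{j=0}^{N-1}\frac{(-1)^{j}}{j!}(\xi/x)^{j\kappa}+r_N(\xi,x)$ and inserting into \eqref{L}, one handles the polynomial part using $\xi^{\kappa-1}\widehat f$ holomorphic at $\xi=0$ and bounds the $r_N$ contribution against the estimate from the first step, obtaining
$$
\Bigl|f(x,Y)-\sum_{n=0}^{N-1}a_n(Y)x^{n}\Bigr|\le MC^{N}|x|^{N}\Gamma\bigl(N/\kappa+1\bigr)
$$
uniformly in $Y\in\Omega_{0}$ and small $x\in S_{0}(I_{\epsilon})$, which is exactly membership in $\mathscr O_{\{1/\kappa\}}(S_{0}(I_{\epsilon})\times\Omega_{0})$.

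The main obstacle throughout is uniformity in $k$: the constants in the Borel-side hypothesis must absorb the $R^{-|k|}$ decay so that $\widehat f$ satisfies a single \eqref{B-est}-type bound independent of $Y$, and the contour/Fubini exchange must be justified by this same uniformity. Once this is secured, the proof reduces to the routine application of the Laplace--Borel framework recalled before the statement.
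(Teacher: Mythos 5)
Your proposal is correct and takes essentially the same approach as the paper: assemble $\widehat f(\xi,Y)=\sum_k\widehat f_k(\xi)Y^k$ on the Borel side, use the geometric factor $(R_0/R)^{|k|}$ to get uniform convergence and a bound of the form \eqref{B-est}, note that $\xi^{\kappa-1}\widehat f$ is holomorphic at $\xi=0$ so the expansion \eqref{B-tr} holds with $a_n(Y)\in\mathscr O(\Omega_0)$, and conclude via the Laplace transform. The paper is terser — it simply exhibits $g^*(\xi,Y)=\xi^{\kappa-1}\widehat f(\xi,Y)$ with the required growth and Taylor expansion and invokes the Borel--Laplace equivalence implicitly — whereas you spell out the Laplace reconstruction and the Fubini exchange that identifies $\mathcal L_\kappa\widehat f$ with $\sum_k f_k(x)Y^k$; those details are worth recording but do not change the route.
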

		\begin{proof}
				Since ${g}^*(\xi,Y):=\xi^{\kappa-1}\sum_{k\in {\mathbb N}^{n}}\widehat{f}_k(\xi)Y^k$
				 converges in $(\{|\xi|<r\}\cup S_{\widehat{I}_{\epsilon}(\kappa)})\times \Omega_0$, 
				$|{g}^*(\xi,Y)|\leq M_{\epsilon}e^{c_{\epsilon}|\xi|^{\kappa}}$ and there exist 
				$\{a_n(Y)\}_{n\geq 1}\subset {\mathscr O}(\Omega_0)$ such that 
				${g}^*(\xi,Y)=\sum_{n=1}^{\infty}a_n(Y)\xi^{n-1}$. We have 
			\begin{equation*}
				\begin{aligned}
				\xi^{1-\kappa}{g}^*(\xi,Y)   =\sum_{k\in {\mathbb N}^{n}}
			\widehat{f}_k(\xi)Y^k=\sum_{n=1}^{\infty}a_n(Y)\xi^{n-\kappa}. 
				\end{aligned}
			\end{equation*}
				in $\{0<|\xi|<r\}\times \Omega_0$, which means $\widehat{f}(\xi,Y)=\xi^{1-\kappa}{g}^*(\xi,Y)$
				and $f(x,Y) \in \mathscr O_{\{1/\kappa\}}(S_{0}(I_{\epsilon})\times 
				\Omega_0)$. 
		\end{proof}
			Let ${\phi}_{i}^*(\xi,Y)\in {\mathscr O}(S_0({I}^*)\times U)\; (i=1,2)$.		
			The $\kappa$-convolution is defined by
		\begin{equation}
			 ({\phi}_{1}^*\underset{\kappa}{*}{\phi}_{2}^*)(\xi,Y) 
			 =\int_{0}^{\xi} {\phi}_{1}^*( (\xi^{\kappa}-\eta^{\kappa})^{1/\kappa},Y)
			 {\phi}_{2}^*(\eta,Y)d\eta^{\kappa}.
		\end{equation}
			The following lemma is used for calculations and estimates of convolution equations later. 
		\begin{lem}
			Suppose that ${\phi}_{i}^*(\xi,Y)\in {\mathscr O}(S_0({I}^*)\times U)\; (i=1,2)$ satisfy
			\begin{equation}
				\begin{aligned}
					|{\phi}_{i}^*(\xi,Y)|\leq \frac{C_i |\xi|^{s_i-\kappa} e^{c|\xi|^{\kappa}}}
							{\Gamma(s_i/\kappa)} \quad (s_i>0) \quad for \; (\xi,Y)\in S_0({I}^*)\times U.
				\end{aligned} \label{prod-conv}
			\end{equation}
				Then $({\phi}_{1}^*\underset{\kappa}{*}{\phi}_{2}^*)(\xi,Y) \in 
				{\mathscr O}(S_0({I}^*),U)$ and 
			\begin{equation}
				\begin{aligned}
					|({\phi}_{1}^*\underset{\kappa}{*}{\phi}_{2}^*)(\xi,Y)|
					\leq \frac{C_1C_2 |\xi|^{s_1+s_2-\kappa} e^{c|\xi|^{\kappa}}}
							{\Gamma((s_1+s_2)/\kappa)} .
				\end{aligned} \label{prod}
			\end{equation}
		    \label{lem-prod}
		\end{lem}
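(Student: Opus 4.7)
The plan is to reduce the estimate \eqref{prod} to the classical Beta-function identity by a change of variables. First I pass to $u=\eta^{\kappa}$ and $v=\xi^{\kappa}$ and deform the $\eta$-contour to the radial segment from $0$ to $\xi$, which lies entirely in $S_0(I^*)$ whenever $\xi\in S_0(I^*)$. In the $u$-plane this corresponds to the straight segment $u=tv$, $t\in[0,1]$, so with $du=v\,dt$ the convolution takes the form
\begin{equation*}
(\phi_1^*\underset{\kappa}{*}\phi_2^*)(\xi,Y)=v\int_0^1\phi_1^*\bigl((v(1-t))^{1/\kappa},Y\bigr)\,\phi_2^*\bigl((vt)^{1/\kappa},Y\bigr)\,dt.
\end{equation*}

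Next I insert the assumed bounds \eqref{prod-conv}. Along this segment $|v-u|=|v|(1-t)$ and $|u|=|v|t$, so $e^{c|v-u|+c|u|}=e^{c|v|}=e^{c|\xi|^{\kappa}}$, and a direct computation gives
\begin{equation*}
\bigl|v\,\phi_1^*((v(1-t))^{1/\kappa},Y)\,\phi_2^*((vt)^{1/\kappa},Y)\bigr|\leq\frac{C_1C_2\,|\xi|^{s_1+s_2-\kappa}e^{c|\xi|^{\kappa}}}{\Gamma(s_1/\kappa)\Gamma(s_2/\kappa)}(1-t)^{s_1/\kappa-1}t^{s_2/\kappa-1}.
\end{equation*}
Integrating over $t\in[0,1]$ and applying the identity $\int_0^1 t^{s_2/\kappa-1}(1-t)^{s_1/\kappa-1}\,dt=\Gamma(s_1/\kappa)\Gamma(s_2/\kappa)/\Gamma((s_1+s_2)/\kappa)$ yields \eqref{prod} exactly.

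Finally, holomorphy of $(\phi_1^*\underset{\kappa}{*}\phi_2^*)(\xi,Y)$ on $S_0(I^*)\times U$ follows from the same $t$-representation: the integrand is jointly holomorphic in $(\xi,Y)$ for each $t\in[0,1]$ and is dominated locally uniformly by the estimate above, so differentiation under the integral sign is justified. The main (and essentially only) technical point is the contour deformation and the consistent choice of the $1/\kappa$-branches of $(v-u)^{1/\kappa}$ and $u^{1/\kappa}$; since $\arg\eta$ stays constant along the radial path, the integrand remains single-valued and holomorphic in $\eta$ on the segment, so the $t$-parametrization is legitimate. I expect no serious analytic obstacle beyond this bookkeeping.
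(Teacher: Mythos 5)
Your proof is correct and follows essentially the same route as the paper: both parametrize the radial contour in the $\eta^\kappa$-variable, insert the assumed bounds so the exponentials combine to $e^{c|\xi|^\kappa}$, and apply the Beta-function identity $\int_0^1 t^{s_2/\kappa-1}(1-t)^{s_1/\kappa-1}\,dt=\Gamma(s_1/\kappa)\Gamma(s_2/\kappa)/\Gamma((s_1+s_2)/\kappa)$. The paper keeps the parameter as $r\in[0,|\xi|]$ while you normalize to $t\in[0,1]$, and you additionally make explicit the holomorphy and branch-consistency remarks that the paper leaves implicit, but the substance of the estimate is identical.
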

		\begin{proof}
				Let $\arg \xi=\theta$. Then it holds that
			\begin{equation*}
				\begin{aligned}				
				 ({\phi}_{1}^*\underset{\kappa}{*}{\phi}_{2}^*)(\xi,Y) 
				 & =\int_{0}^{|\xi|e^{i\theta}} {\phi}_{1}^*( (\xi^{\kappa}-\eta^{\kappa})^{1/\kappa},Y) 
				{\phi}_{2}^*(\eta,Y)d\eta^{\kappa}\\.
				 & =\int_{0}^{|\xi|} {\phi}_{1}^*( (|\xi|^{\kappa}-r^{\kappa})^{1/\kappa}e^{i\theta},Y) 
			 	{\phi}_{2}^*(re^{i\theta},Y)e^{i\kappa\theta}dr^{\kappa}
			 \end{aligned}
		\end{equation*}
			and
		\begin{equation*}
			\begin{aligned}\ &
			 |{\phi}_{1}^*( (|\xi|^{\kappa}-r^{\kappa})^{1/\kappa}e^{i\theta},Y) 
			 	{\phi}_{2}^*(re^{i\theta},Y)e^{i\kappa\theta}|\\  &\leq \frac{C_1C_2}
			 	{\Gamma(s_1/\kappa)\Gamma(s_2/\kappa)}
			 	(|\xi|^{\kappa}-r^{\kappa})^{s_1/\kappa-1}e^{c(|\xi|^{\kappa}-r^{\kappa})} 
			 	|r|^{s_2-\kappa} e^{cr^{\kappa}}. 
			\end{aligned}
		\end{equation*}
		   We have \eqref{prod} from
		\begin{equation*}
		   	\begin{aligned}
		   		\int_{0}^{|\xi|}(|\xi|^{\kappa}-r^{\kappa})^{s_1/\kappa-1}|r|^{s_2-\kappa}dr^{\kappa} =
		   		\frac{\Gamma(s_1/\kappa)\Gamma(s_2/\kappa)}{\Gamma((s_1+s_2)/\kappa)}
		   		|\xi|^{s_1+s_2-\kappa}.
		   	\end{aligned}
		\end{equation*}
		\end{proof}
			We note that Lemma\ref{lem-prod} holds for an infinite $S(I^*)$.		
			Let $\widehat{\phi}_i(\xi,Y)$ be $\kappa$-Borel transform of $\phi_i(x,Y)\; (i=1,2)$. 
			Then    
		\begin{equation}
			\begin{aligned}
			\phi_1(x,Y)\phi_2(x,Y)={\mathcal L}_{\kappa}(\widehat{\phi}_1\underset{\kappa}{*}\widehat{\phi}_2)
			\end{aligned} \label{prod'}
		\end{equation}
			holds. This means $\phi_1(x,Y)\phi_2(x,Y)$ is $\kappa$-Laplace transform of 
			$(\widehat{\phi}_1\underset{\kappa}{*}\widehat{\phi}_2)$.
   \section{\normalsize Nonlinear equation with irregular singularity I } 
	  	First we study the case $F_0(x)\equiv 0$.
	  	The case $F_0(x)\not \equiv 0$ is considered in section 4. Let
		\begin{equation}
			\begin{aligned}\ &
				x^{1+\gamma}\frac{dY}{dx}= A(x)Y+F(x,Y) 
				\quad	Y={}^t(y_1.y_2,\cdots,y_n)  \\
				& A(x)=(a_{i,j}(x))_{1\leq i,j \leq n},\; F(x,y)={}^t(f_1(x,Y),\cdots,f_n(x,Y)),
				\end{aligned} \label{Eq-0}
		  \end{equation}
		  where $\gamma $ is a positive integer. $a_{i,j}(x)$ is holomorphic in $\{|x|<r\}$ and $f_i(x,Y)$ is 
		  holomorphic in $\{(x,Y)\in \mathbb{C}\times \mathbb{C}^n;|x|<r, |Y|<R\}$ with $f_i(x,Y)=O(|Y|^2)$.
		  If $F(0,Y)\not =0$, let $y_i=x z_i$. Then $x^{-1}F(x,xZ)=O(x)$ and 
		 \begin{equation*}
		 	\begin{aligned}
			 	x^{1+\gamma}\frac{dZ}{dx}=& (A(x)-x^{\gamma}I)Z+x^{-1}F(x,x Z). 
		 	\end{aligned} 
		 \end{equation*}
			 Hence we replace $A(x)-x^{\gamma}I$ by $A(x)$
			and $x^{-1}F(x,xZ)$ by $F(x,Z)$. We have 
			 $F(0,Z)=0$. Thus we study, by denoting $Z$ by $Y$ again,      
 		\begin{equation}\left \{
				\begin{aligned} \ &
				x^{1+\gamma}\frac{dy_i}{dx}= \sum_{j=1}^{n}a_{i,j}(x)y_j+f_i(x,Y)
				\quad  i=1,2,\cdots, n, \\
				& A(x)=(a_{i,j}(x)), \; F(x,Y)=(f_{1}(x,Y),\cdots, f_{n}(x.Y))
			\end{aligned}\right. \label{Eq-0''}
		\end{equation}
			 with $f_i(0,Y)=0$. 
		  Let $\{\lambda_{i}\}_{1\leq i \leq n}$ be eigenvalues of $A(0)$. We assume 
			\begin{itembox}[l]{\bf Condition 0. Eigenvalues are distinct} 
				${\mathbf \Lambda}=\{\lambda_{i}; i=1,\cdots,n \}$, \; $\lambda_{i}\not=\lambda_j$
				for $i\not=j$. 
			\end{itembox}
			Set $	\omega_{i,j}=\arg(\lambda_i-\lambda_j)\; (0\leq \omega_{i,j}<2\pi)$ for $i\not=j$ and 	
		\begin{equation}\left \{
			\begin{aligned}\ &
			     {\mathbf  \Lambda}^{\sharp} =\{\lambda_i-\lambda_j; i,j=1,2,\cdots n, i\not=j\}\\
				 &	\theta_{i,j,\ell}=(\omega_{i,j}+2\pi \ell)/\gamma,\; 
		 			\ell \in \mathbb{Z},\\ 
		 			& {\mathbf \Theta_1}=\{\theta_{i,j,\ell};\; i\not =j,\;\ell \in \mathbb{Z}\}
				\end{aligned} \right.\label{not-2}
		\end{equation}
		    Let $\theta_* \not \in {\mathbf \Theta_1}$. Then there exist $\epsilon_*>0$ such that 
		    $(\theta_*-\epsilon_*,\theta_*+ \epsilon_*)\cap  {\mathbf \Theta_1}=\emptyset$. Let 
	     $\delta_*=\pi/2\gamma+ \epsilon_*$ and $I=(\theta_*-\delta_*,\theta_*+\delta_*)$.
       Then the following Proposition holds under Condition 0.
		\begin{prop} Let $\theta_* \not \in {\mathbf \Theta_1}$ and 
		$a_{i,j}(x)\in {\mathscr O}_{\{1/\gamma\}}(S_0(I))$.
		 	  Then there exists a matrix $P(x)$ with elements in ${\mathscr O}_{\{1/\gamma\}}(S_0(I_{\epsilon}))$
		 	  for any small $\epsilon>0$ 
		 	  and $P(0)=Id$ such that $Y=P(x)Z$ transforms 	$x^{1+\gamma}\frac{dY}{dx}= A(x)Y$ to 
		\begin{equation}
			\begin{aligned}
			x^{1+\gamma}\frac{dZ}{dx}= \Lambda(x)Z, 		  
			\end{aligned} \label{eq-L}
		\end{equation}
		where 
			$\Lambda(x)=diag.(\lambda_1(x),\lambda_2(x),\cdots, \lambda_n(x))$ is a diagonal matrix 
			and 
			$\lambda_{i}(x)$ is a polynomial with degree $\leq \gamma$ and $\lambda_{i}(0)=\lambda_i$.
						\label{prop-1}
		\end{prop}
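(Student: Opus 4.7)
My plan is to follow the classical reduce–construct–sum strategy: produce $P(x)$ first as a formal power series, then verify its $\gamma$-Borel summability on $I$ by a fixed-point argument in the Borel plane. First, Condition~0 supplies a full basis of eigenvectors of $A(0)$, so a constant invertible matrix reduces the problem to the case $A(0)=\mathrm{diag}(\lambda_1,\dots,\lambda_n)$; I assume this throughout. I then seek $P(x)=I+\sum_{k\geq 1}P_k x^k$ together with $\Lambda(x)=\sum_{k=0}^{\gamma}\Lambda_k x^k$, each $\Lambda_k$ diagonal, satisfying the matrix identity
\[
x^{1+\gamma}P'(x) = A(x)P(x) - P(x)\Lambda(x),
\]
which is equivalent to $Y=P(x)Z$ carrying the linear system to \eqref{eq-L}.

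Comparing coefficients of $x^m$ gives $\Lambda_0=A(0)$ at order $0$, and for $m\geq 1$ a relation of the form
\[
[A(0),P_m] = \Lambda_m - R_m + (m-\gamma)P_{m-\gamma}\cdot\mathbf{1}_{\{m>\gamma\}},
\]
in which $R_m$ depends only on $P_0,\dots,P_{m-1}$ and on $A_i,\Lambda_i$ with $i<m$. Because $A(0)$ is diagonal with pairwise distinct eigenvalues, $[A(0),\cdot]$ annihilates diagonal matrices and multiplies the $(i,j)$-entry by $\lambda_i-\lambda_j\neq 0$. Reading the diagonal part of the equation determines $\Lambda_m$ (for $1\leq m\leq\gamma$) and, when $m>\gamma$, the previously free diagonal of $P_{m-\gamma}$; reading the off-diagonal part determines the off-diagonal entries of $P_m$. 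This algorithm produces a unique formal matrix solution with $P(0)=I$ and a polynomial $\Lambda(x)$ of degree $\leq\gamma$ with $\lambda_i(0)=\lambda_i$.

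To upgrade this formal $P$ to an element of $\mathscr{O}_{\{1/\gamma\}}(S_0(I))$, I apply the $\gamma$-Borel transform $\mathfrak{B}_\gamma$ entrywise to the matrix identity, using \eqref{prod'} to convert multiplications by $A(x)-A(0)$ and $\Lambda(x)-\Lambda_0$ into $\gamma$-convolutions. The principal terms become multiplication of the $(i,j)$-entry of $\mathfrak{B}_\gamma P$ by a factor whose zeros lie at the points with $\gamma\xi^{\gamma}=\lambda_j-\lambda_i$; the arguments of these zeros are exactly the singular directions $\theta_{i,j,\ell}$ from \eqref{not-2}. The hypothesis $\theta_*\notin\mathbf{\Theta_1}$ together with $\delta_*=\pi/(2\gamma)+\epsilon_*$ guarantees that the Borel sector $\widehat{I}(\gamma)=(\theta_*-\epsilon_*,\theta_*+\epsilon_*)$ avoids every such ray, so on a neighborhood of $\{|\xi|<r\}\cup S(\widehat{I}_\epsilon(\gamma))$ the multiplier is bounded below. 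On the Banach space of holomorphic functions there with weighted norm $\|\varphi\|=\sup|\varphi(\xi)|\,e^{-c|\xi|^\gamma}$, Lemma~\ref{lem-prod} bounds the convolution operators and makes them contracting once $c$ is taken large enough. Picard iteration then produces $\mathfrak{B}_\gamma P(\xi)$ satisfying the estimate \eqref{B-est}, and the $\gamma$-Laplace transform \eqref{L} recovers $P(x)\in\mathscr{O}_{\{1/\gamma\}}(S_0(I))$ with $P(0)=I$.

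The main obstacle is this Borel-plane step: ensuring the multiplier matrix is uniformly invertible across the whole closed sector $\overline{S(\widehat{I}(\gamma))}$ while the singular rays $\theta_{i,j,\ell}$ accumulate nearby, and calibrating the exponential weight $c$ against the $\gamma$-convolution bound of Lemma~\ref{lem-prod} so that the fixed-point iteration converges while preserving the growth bound \eqref{B-est}. Both aspects rely essentially on the distinctness of eigenvalues (Condition~0) and on the exclusion $\theta_*\notin\mathbf{\Theta_1}$.
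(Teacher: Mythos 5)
Your overall plan (diagonalize $A(0)$, compute the formal gauge $P$ by coefficient comparison, then establish Borel summability via a fixed point in the Borel plane) is the right circle of ideas, but there is a genuine gap in the analytic step, and your argument also deviates structurally from the paper's.

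The gap concerns the diagonal entries. After Borel transform, the $(i,j)$-entry of the gauge is governed by the multiplier $\gamma\xi^{\gamma}-(\lambda_j-\lambda_i)$. You correctly note that for $i\neq j$ the condition $\theta_*\notin\mathbf{\Theta}_1$ keeps the multiplier bounded away from zero on $S(\widehat I)$ and near $\xi=0$. But for $i=j$ the multiplier is $\gamma\xi^{\gamma}$, which \emph{does} vanish at $\xi=0$, so your assertion that ``the multiplier is bounded below'' on $\{|\xi|<r\}\cup S(\widehat I_\epsilon)$ fails exactly on the diagonal, and your Banach fixed-point on the weighted sup-norm $\sup|\varphi(\xi)|e^{-c|\xi|^{\gamma}}$ does not survive the division by $\gamma\xi^{\gamma}$. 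In your formal step you do determine the diagonal of $P_{m-\gamma}$ from the order-$m$ relation, which is equivalent to dividing by $m-\gamma\sim\gamma\xi^{\gamma}$; that is precisely where the trouble reappears in the Borel plane. A related soft spot: Lemma~\ref{lem-prod} keeps the \emph{same} exponential rate $c$ on both sides, so it does not by itself show that ``taking $c$ large'' makes the convolution operators contracting; you would need a sharper two-exponent estimate, or, as the paper does, an iteration whose gain comes from the growing factor $\Gamma(m/\gamma)$, not from the exponential weight.

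The paper avoids the diagonal degeneracy by splitting the construction. A first lemma performs a \emph{polynomial} gauge $Y=P(x)Z$ with $P(x)=\sum_{k=0}^{\gamma}P_k x^k$, $P_0=Id$, chosen so that after the change of variables the diagonal entries become polynomials $\lambda_i(x)$ of degree $\leq\gamma$ with $\lambda_i(0)=\lambda_i$, while the remaining perturbation satisfies $b_{i,k}(x)=O(x^{1+\gamma})$. Only then is the near-identity transform $Id+C(x)$ sought. After Borel transform the right-hand sides are $\widehat a_{i,k}(\xi)=O(|\xi|)$ because of the $O(x^{1+\gamma})$ gain, which exactly compensates the zero of $\gamma\xi^{\gamma}$ at the origin for the diagonal entries, and the off-diagonal multipliers are bounded below by the exclusion $\widehat I\cap\mathbf\Theta_1=\emptyset$. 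The paper then solves the convolution system \eqref{eq-conv} by an explicit iterative series $\widehat C_{i,k}=\sum_m\widehat C_{i,k}^m$, with the estimate $|\widehat C_{i,k}^m(\xi)|\leq A_\epsilon^m|\xi|^{m-\gamma}e^{c_\epsilon|\xi|^\gamma}/\Gamma(m/\gamma)$ proved by induction via Lemma~\ref{lem-prod}; the $\Gamma(m/\gamma)$ in the denominator gives Mittag--Leffler-type convergence to a function with the bound required for $\gamma$-Borel summability. To repair your proposal you should first insert the polynomial reduction to make the perturbation $O(x^{1+\gamma})$ (equivalently, make the Borel-transformed data vanish to order $1$ at $\xi=0$), and replace the ``increase $c$ to contract'' claim by the $\Gamma$-gain iteration actually supported by Lemma~\ref{lem-prod}.
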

			This proposition follows from multisummablity of the fundamental solution of a system of equations
			(\cite{BBRS}). We give a simpler proof in section 6 under Condition 0. 
			We take $\theta_*\not \in {\mathbf \Theta_1}$ later so that it satisfies other conditions. \par
			Hence we study the following system of nonlinear differential equations:
		 \begin{equation}\tag{Eq-Y} \left \{
		 	\begin{aligned}
		 		Y=& {}^t(y_1.y_2,\cdots,y_n) \\ 	
			 	x^{1+\gamma}\frac{dY}{dx}=& \Lambda(x)Y+F(x,Y) \\
			 	  \Lambda(x)=& diag. (\lambda_{1}(x),\cdots,\lambda_{n}(x)) \\  
			 	 F(x,Y)=& {}^t(f_1(x,Y),\cdots, f_n(x,Y)),
		 	\end{aligned}\right . \label{Eq-Y}
		 \end{equation}
		 where 
		 \begin{equation}
				\begin{aligned}\ &
						\{f_i(x,Y)\}_{1\leq i \leq n}\subset \mathscr O_{\{1/\gamma\}}(S_0(I)\times \Omega),
				\end{aligned}
			\end{equation}
			with $F(x,Y)=O(|Y|^2)$ and $F(0,Y)=0$, and $\{\lambda_{i}(x)\}_{1\leq i \leq n}$ are 
			polynomials with degree $\leq \gamma$ and $\lambda_{i}(0)=\lambda_i$,  \par
       We proceed to give other conditions on the eigenvalues. Let 
			    $\emptyset \not = {\mathbf \Lambda'}\subset {\mathbf \Lambda}$ and  
		  	  ${\mathbf \Lambda'}=\{\lambda_{i}; i=1,\cdots,n' \}$.
		  	We assume the following two conditions on ${\mathbf \Lambda'}$.
  	\begin{itembox}[l]{\bf Condition 1. Partial Poincar{\`e} cndition} 
  			There exist $0 \leq { \theta_{\mathbf \Lambda'}}<2\pi$ and 
				$0<\delta_{\mathbf \Lambda'}<\pi/2$ such that ${\mathbf \Lambda'}
				\subset \Sigma= \{\eta \not=0; |\arg \eta - \theta_{\mathbf \Lambda'}|
				<\delta_{\mathbf \Lambda'} \}$.
		\end{itembox}
		\begin{itembox}[l]{	\bf Condition 2. Partial non resonance}
		  \begin{equation}
		  	\sum_{j=1}^{n'} \lambda_{j}m_{j}-\lambda_i\not =0 \;\; \mbox{for $m \in {\mathbb N}^{n'}$ 
		  	with $ |m|\geq 2$},\; 1\leq i \leq n.
		 	\end{equation}
	  \end{itembox}
			 	Let  
		\begin{equation}
		  	\begin{aligned}
		  	  \frak{L} =& \bigcup_{i=1}^n \big\{\sum_{j=1}^{n'} \lambda_{j}m_{j}-\lambda_i;\; |m|\geq 2\big \}
		  	  \not \ni 0.
				\end{aligned}\label{fL}
		\end{equation}
		    Function $\gamma\eta+\sum_{j=1}^{n'} \lambda_{j}m_{j}-\lambda_i\; (|m|\geq 2)$ 
			  does not vanish for $\gamma\eta \not \in -{\frak{L}}$.
			  Let $L(\theta)=
			  \{r \geq  0; re^{i\theta}\}$ be a half line in a direction $\theta$.
		\begin{lem}
				There exist an interval $\widehat{J}=(\widehat{\theta}_0-\widehat{\epsilon}_0,
				\widehat{\theta}_0+\widehat{\epsilon}_0)$ $(\widehat{\epsilon}_0>0)$ and 
				constants $r_0,C_{\widehat{J}}>0$ 
			  such that $ S(\widehat{J})\cap (-\overline{\frak L})=\emptyset$ and
	 		\begin{equation}
	 			\begin{aligned}
			 		|\gamma \eta+	\sum_{j=1}^{n'} \lambda_{j}m_{j}-\lambda_i|
			 		\geq C_{\widehat{J}}(|\eta|+|m|), \;\; |m| \geq 2, \;\; 1\leq i \leq n
	 			\end{aligned} 	 		\end{equation}\label{lem-0}	
	 			for $\eta \in S(\widehat{J})\cup\{|\eta|<r_0 \}$.  
	 	\end{lem}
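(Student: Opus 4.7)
The plan is to pick $\widehat\theta_0$ near $\theta_{\mathbf\Lambda'}$, opposite to the direction in which $-\frak L$ accumulates, and then bound $|\gamma\eta+L_{m,i}|$ from below by projecting onto $e^{i\widehat\theta_0}$. The first step is to record the consequences of Condition 1. Setting $c_0=\min_{1\le j\le n'}|\lambda_j|\cos\delta_{\mathbf\Lambda'}>0$ and $M=\max_{1\le i\le n}|\lambda_i|$, Condition 1 gives $\Re(e^{-i\theta_{\mathbf\Lambda'}}\lambda_j)\ge c_0$ for every $\lambda_j\in\mathbf\Lambda'$, so that for each $L_{m,i}=\sum_{j=1}^{n'}\lambda_j m_j-\lambda_i\in\frak L$,
\begin{equation*}
\Re\bigl(e^{-i\theta_{\mathbf\Lambda'}}L_{m,i}\bigr)\ge c_0|m|-M.
\end{equation*}
This already implies $|L_{m,i}|\to\infty$ as $|m|\to\infty$, so Condition 2 makes $\frak L$ a closed discrete subset of $\mathbb C\setminus\{0\}$ and $\overline{\frak L}=\frak L$.

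Next I locate the angular accumulation of $\frak L$. There exists $M_0\ge 2$ such that whenever $|m|\ge M_0$, the leading term $\sum_j\lambda_j m_j$ dominates $\lambda_i$ and forces $\arg L_{m,i}\in(\theta_{\mathbf\Lambda'}-\delta_{\mathbf\Lambda'},\theta_{\mathbf\Lambda'}+\delta_{\mathbf\Lambda'})$, since that open sector contains the positive cone generated by $\mathbf\Lambda'\subset\Sigma$. Hence $\{\arg(-L):L\in\frak L\}$ is contained in the opposite sector $(\theta_{\mathbf\Lambda'}+\pi-\delta_{\mathbf\Lambda'},\theta_{\mathbf\Lambda'}+\pi+\delta_{\mathbf\Lambda'})$ together with a finite exceptional set coming from the values $2\le|m|<M_0$. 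Because $\delta_{\mathbf\Lambda'}<\pi/2$, the direction $\theta_{\mathbf\Lambda'}$ is strictly separated from this opposite sector; after removing small neighbourhoods of the finitely many exceptional directions I can pick $\widehat\theta_0$ close to $\theta_{\mathbf\Lambda'}$ and $\widehat\epsilon_0>0$ small enough that $\widehat J=(\widehat\theta_0-\widehat\epsilon_0,\widehat\theta_0+\widehat\epsilon_0)$ is disjoint from $\{\arg(-L):L\in\frak L\}$, which yields the first conclusion $S(\widehat J)\cap(-\overline{\frak L})=\emptyset$.

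For the quantitative estimate I split into two ranges. Enlarging $M_0$ if necessary so that $c_0|m|-M\ge c_0|m|/2$ for $|m|\ge M_0$, and shrinking $\widehat\epsilon_0$ so that $\cos\widehat\epsilon_0>0$ and $\widehat\theta_0$ stays close to $\theta_{\mathbf\Lambda'}$, the projection argument gives for $|m|\ge M_0$
\begin{equation*}
|\gamma\eta+L_{m,i}|\ge\Re\bigl(e^{-i\widehat\theta_0}(\gamma\eta+L_{m,i})\bigr)\ge\gamma|\eta|\cos\widehat\epsilon_0+\frac{c_0}{2}|m|,
\end{equation*}
which exceeds a positive constant times $|\eta|+|m|$. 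For $2\le|m|<M_0$ only finitely many nonzero $L$ occur, each with $\arg(-L)$ separated from $\widehat J$ by a uniform angle $\phi_0>0$; the elementary inequality $|z_1-z_2|\ge\sqrt{(1-\cos\phi_0)/2}\,(|z_1|+|z_2|)$ whenever $|\arg z_1-\arg z_2|\ge\phi_0$, derived from $|z_1-z_2|^2=|z_1|^2+|z_2|^2-2|z_1||z_2|\cos(\arg z_1-\arg z_2)$, applied to $z_1=\gamma\eta$ and $z_2=-L$ shows $|\gamma\eta+L|\ge c_1(|\eta|+|L|)$, and since $|L|$ is bounded below while $|m|\le M_0$, this dominates $c_2(|\eta|+|m|)$. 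Taking $C_{\widehat J}$ as the minimum of the constants from the two cases finishes the proof. The only delicate point is arranging the simultaneous choice of $\widehat\theta_0$ and $\widehat\epsilon_0$ so that the finite small-$|m|$ exceptional directions $\arg(-L)$ are avoided together with the asymptotic opposite sector.
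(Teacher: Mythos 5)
Your proposal follows the same route as the paper: observe that $\frak L$ accumulates angularly inside the slightly enlarged sector $\Sigma_\varepsilon$ (so only finitely many $L\in\frak L$ lie outside it), hence $-\frak L$ accumulates in $-\Sigma_\varepsilon$; pick $\widehat\theta_0$ outside $-\Sigma_\varepsilon$ and then shrink $\widehat\epsilon_0$ to avoid the finitely many exceptional directions $\arg(-L)$. Where you add value is in actually deriving the quantitative bound, which the paper merely asserts. That derivation is sound: the elementary estimate $|z_1-z_2|\ge\sqrt{(1-\cos\phi_0)/2}\,(|z_1|+|z_2|)$ when the angular separation is at least $\phi_0$ is correct, and combined with $|L_{m,i}|\ge c_0|m|-M$ it gives the desired lower bound, uniformly, without needing the large/small $|m|$ split at all — the same inequality works for every $|m|\ge 2$ once you know the separation angle is bounded below, so the projection step onto $e^{-i\widehat\theta_0}$ for $|m|\ge M_0$ is an unnecessary second mechanism. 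One small imprecision in that projection step: the inequality $\Re\bigl(e^{-i\widehat\theta_0}L_{m,i}\bigr)\ge\frac{c_0}{2}|m|$ does not follow directly from $\Re\bigl(e^{-i\theta_{\mathbf\Lambda'}}L_{m,i}\bigr)\ge\frac{c_0}{2}|m|$; you need to quantify the transfer using $|L_{m,i}|\le C'|m|$ so that $\bigl|\Re(e^{-i\widehat\theta_0}L_{m,i})-\Re(e^{-i\theta_{\mathbf\Lambda'}}L_{m,i})\bigr|\le|\widehat\theta_0-\theta_{\mathbf\Lambda'}|\,C'|m|$ and then choose $|\widehat\theta_0-\theta_{\mathbf\Lambda'}|$ small enough, which degrades the constant to, say, $c_0/4$. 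This is easily repaired and does not affect the conclusion.
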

	 	\begin{proof}
	     	Let $\varepsilon>0$ be a small constant with 
	     	$\delta_{\mathbf \Lambda'}+\varepsilon<\pi/2$ and
	 			$\Sigma_{\varepsilon}=\{\eta \not=0; |\arg \eta - \theta_{\mathbf \Lambda'}|
	 			<\delta_{\mathbf \Lambda'}+\varepsilon \}$. Then $ \frak{L}\cap \Sigma_{\varepsilon}^{c}$ and 
	 			$(-\frak{L})\cap (-\Sigma_{\varepsilon}^{c})$ are finite. Hence
	 			there exists $\widehat{\theta}_0$ with $L(\widehat{\theta}_0) \subset (-\Sigma_{\varepsilon}^{c})$ 
	 			and $\widehat{\epsilon}_0>0$ 
	 			such that 
	 			$L(\theta)\cap (-\overline{\frak L})=\emptyset$ for
	 			$\theta\in \widehat{J}=(\widehat{\theta}_0-\widehat{\epsilon}_0,\widehat{\theta}_0+
	 			\widehat{\epsilon_0})$ and 			
	 		\begin{equation*}
	 				|\gamma \eta
			 		+	\sum_{j=1}^{n'} \lambda_{j}m_{j}-\lambda_i| \geq C_{\widehat{J}}'
			 		(|\eta|+|m|)  \quad \eta \in S(\widehat{J}).
	 		\end{equation*}
	 		holds for some constant  $C_{\widehat{J}}'>0$.  Since there exist $r_0,c_0>0$ such that
	 		$|\gamma \eta+	\sum_{j=1}^{n'} \lambda_{j}m_{j}-\lambda_i| >c_0$	in $\{|\eta|<r_0\}$, 
	 		the assertion holds.	 			 
	 	\end{proof}
	  	In addition we assume a condition of an interval $\widehat{I}$ to define a sector $S_0(I)$ 
	  	in $x$-space.  
	  \begin{itembox}[l]{	\bf Condition 3.}
	 		{\it Let  $\widehat{I}=(\theta_*-{\epsilon}_*,\theta_*+ {\epsilon}_*)\;\; ({\epsilon}_*>0)$ with
				$L(\gamma\theta)\cap \mathbf{\Lambda}^{\sharp}=\emptyset$ for $\theta\in \widehat{I}$ and  
 					there exist $ C_{\widehat{I}},r>0$ such that
			\begin{equation}
					\begin{aligned}\ &
	 				|\gamma \xi^{\gamma}
			 		+	\sum_{j=1}^{n'} \lambda_{j}m_{j}-\lambda_i| \geq C_{\widehat{I}}
			 		(|\xi|^{\gamma}+|m|) \\ & \;|m|\geq 2 \quad    
			 		\xi \in S(\widehat{I})\cup\{|\xi|<r\}, \quad 1\leq i \leq n,
			 		\end{aligned}
			 		\label{Ine}
	 		\end{equation}
	 		holds.}
	 	\end{itembox}	
	 	    The interval $\widehat{I}$ appears in \eqref{L}. 
		    We show the existence of $\widehat{I}$ satisfying Condition 3. 
		 		Let $\widehat{J}$ be that in Lemma \ref{lem-0}. 
		 	 		Since $\mathbf{\Lambda}^{\sharp}$ is finite, we can take $\widehat{J}$ such that
	 		$S(\widehat{J})\cap \mathbf{\Lambda}^{\sharp}=\emptyset$. Hence 
	 		$S(\widehat{J})\cap \big((-\overline{\frak L})\cup \mathbf{\Lambda}^{\sharp} \big)=\emptyset$, 
		 	Let $\theta_*=\widehat{\theta}_0/\gamma$, ${\epsilon}_*=\widehat{\epsilon}_0/\gamma$ and 
		 	$\widehat{I}=(\theta_*-{\epsilon}_*,\theta_*+ {\epsilon}_*)$.    
      Then $\gamma \widehat{I}=\widehat{J}$ and if $\xi \in S(\widehat{I})$, 
       $\xi^{\gamma} \in S(\widehat{J})$. This $\widehat{I}$ satisfies the above condition. 
		    We note that
		 		$L(\gamma\theta)\cap \mathbf{\Lambda}^{\sharp}=\emptyset$ is equivalent to
		 		$\theta \not \in \mathbf{\Theta_1}$, hence 	$\widehat{I}\cap\mathbf{\Theta_1}=\emptyset$.
		\begin{rem}	
			We can choose $\theta_{\mathbf \Lambda'}=\widehat{\theta}_0$, 
			by changing ${\delta}_{\mathbf \Lambda'}$. if necessary. 
		\label{rem2.3}
	\end{rem}	  
			 Let us define an interval $I$ for $\widehat{I}$ 
			under Condition 3. Let $\delta_*=\pi/2\gamma+ {\epsilon}_*$ and 
			$I=(\theta_*-\delta_*,\theta_*+\delta_*)$. Let us consider a sector $S_0(I)$ with angle $>\pi/\gamma$
			in $x$-space. 
			Then we get one of the main theorems.
	 \begin{thm} 
	 		   There exists $\Phi(x,Z)=(\phi_1(x,Z),\cdots,\phi_{n}(x,Z))$\;
	   		 such that for any small $\epsilon>0$ there exists $r_{\epsilon}>0$,
	   		$\phi_i(x,Z)\in {\mathscr O}_{\{1/\gamma\}}
	   		(S_{\{0\}}(I_{\epsilon})\times \{Z=(z_1, \cdots,z_{n'})\in {\mathbb C}^{n'}; |Z|<r_{\epsilon} \})$
	   		and the followings hold.
	   		\begin{enumerate}
	   		\item[{\rm(1)}] $\phi_{i}(x,Z)=z_i+O(|Z|^2)$ for $1\leq i\leq n'$ and 
	   				$\phi_{i}(x,Z)=O(|Z|^2)$ for $i > n'$. 
   		 \item[{\rm(2)}] Let $\mathcal S$
   		  be an open set in $S_{\{0\}}(I_{\epsilon})$ and 
   			$Z(x)=(z_1(x),\cdots,z_{n'}(x))\; (x\in \mathcal S,|Z(x)|<r_{\epsilon})$ be a solution of
	 				\begin{equation}
	 					\begin{aligned}
	 					x^{1+\gamma}\frac{dz_{i}}{dx}=\lambda_{i}(x)z_i,\; i=1,2,\cdots,n'.
	 					\end{aligned}
	 				\end{equation}	
	 			Then $Y(x)=\Phi(x,Z(x))$, $y_i(x)=\phi_{i}(x,z_1(x),\cdots,z_{n'}(x))\; (1\leq i \leq n)$, 
	 			satisfies \eqref{Eq-Y} in  $\mathcal S$.
	 		\end{enumerate}
   		   \label{main-th}
 	 		\end{thm}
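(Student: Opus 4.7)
The plan is to construct $\Phi$ as a formal power series in $z$ whose coefficients are $\gamma$-Borel summable in $x$, then invoke Proposition~\ref{prop-Borel} to sum over $z$. Substituting $Y=\Phi(x,Z(x))$ into \eqref{Eq-Y} and using $x^{1+\gamma}\,dz_j/dx=\lambda_j(x)z_j$ reduces the problem to finding $\Phi(x,z)$ satisfying the functional equation
\begin{equation*}
x^{1+\gamma}\,\partial_x\Phi+\sum_{j=1}^{n'}\lambda_j(x)z_j\,\partial_{z_j}\Phi=\Lambda(x)\Phi+F(x,\Phi).
\end{equation*}
Expanding $\phi_i(x,z)=\sum_{m\in\mathbb{N}^{n'}}\phi_{i,m}(x)z^m$ with initial data $\phi_{i,e_j}(x)=\delta_{ij}$ for $j\le n'$ (which enforces condition~(1)) and comparing coefficients of $z^m$ for $|m|\ge 2$ yields the scalar ODE
\begin{equation*}
x^{1+\gamma}\phi'_{i,m}(x)+\sigma_{i,m}(x)\phi_{i,m}(x)=G_{i,m}(x),\qquad \sigma_{i,m}(x):=\sum_{j=1}^{n'}m_j\lambda_j(x)-\lambda_i(x),
\end{equation*}
where $G_{i,m}$ is the $z^m$-coefficient of $F(x,\Phi)$ and, because $F=O(|Y|^2)$, depends only on $\{\phi_{i',m'}\}_{|m'|<|m|}$.

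Next I would pass to the $\gamma$-Borel plane. Writing $\sigma_{i,m}(x)=\sigma_{i,m}(0)+x\tilde\sigma_{i,m}(x)$ with $\tilde\sigma_{i,m}$ a polynomial of degree $\le\gamma-1$ whose coefficients are linear in $m$, and using $\mathfrak{B}_\gamma(x^{\gamma+1}f')=\gamma\xi^\gamma\widehat{f}$ together with the convolution identity \eqref{prod'}, the recursion becomes
\begin{equation*}
\bigl(\gamma\xi^\gamma+\sigma_{i,m}(0)\bigr)\widehat{\phi}_{i,m}(\xi)=\widehat{G}_{i,m}(\xi)-\widehat{x\tilde\sigma_{i,m}}(\xi)\underset{\gamma}{*}\widehat{\phi}_{i,m}(\xi).
\end{equation*}
By the key inequality \eqref{Ine}, for $\xi\in S(\widehat{I})$ the prefactor satisfies $|\gamma\xi^\gamma+\sigma_{i,m}(0)|\ge C_{\widehat{J}}(|\xi|^\gamma+|m|)$ and in particular is nonvanishing, so division is legitimate and the resulting identity is a Volterra-type fixed-point equation in $\xi$: since $\widehat{x\tilde\sigma_{i,m}}$ vanishes at $\xi=0$, a short Neumann iteration produces $\widehat{\phi}_{i,m}(\xi)$ holomorphic on $\{0<|\xi|<r\}\cup S(\widehat{I}_\epsilon)$, from which $\phi_{i,m}(x)$ is recovered by $\gamma$-Laplace transform on $S_0(I_\epsilon)$.

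The crux of the proof is to establish, by induction on $|m|$, the uniform Gevrey-type bound
\begin{equation*}
|\widehat{\phi}_{i,m}(\xi)|\le \frac{K\,|\xi|^{1-\gamma}e^{c|\xi|^\gamma}}{R^{|m|}\Gamma(1/\gamma)},\qquad \xi\in\{0<|\xi|<r\}\cup S(\widehat{I}_\epsilon),
\end{equation*}
with constants $K,c,R>0$ independent of $(i,m)$. Estimates for $\widehat{G}_{i,m}$ follow from Lemma~\ref{lem-Cau} applied to $F$ together with repeated use of Lemma~\ref{lem-prod} to control the nested convolutions built from the lower-order $\widehat{\phi}_{i',m'}$. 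The main obstacle is that $\tilde\sigma_{i,m}$ grows linearly in $|m|$, so the convolution term $\widehat{x\tilde\sigma_{i,m}}\underset{\gamma}{*}\widehat{\phi}_{i,m}$ carries an unwanted factor proportional to $|m|$; this is precisely absorbed by the $+|m|$ in the lower bound \eqref{Ine} — indeed that inequality was proved in exactly this form for this reason — so the induction closes with constants independent of $m$. Once the uniform estimates are in hand, Proposition~\ref{prop-Borel} assembles $\phi_i(x,z)=\sum_m\phi_{i,m}(x)z^m\in\mathscr{O}_{\{1/\gamma\}}(S_0(I_\epsilon)\times\Omega_0)$ for $\Omega_0=\{|z|<R_0\}$ with $R_0<R$; part~(1) is built into the choice of initial data, and part~(2) is the functional PDE that was used to derive the recursion.
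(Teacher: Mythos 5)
Your construction follows the same skeleton as the paper: derive the auxiliary quasilinear PDE for $\Phi$, expand in powers of $z$, pass to the $\gamma$-Borel plane, divide by the non-vanishing symbol thanks to \eqref{Ine}, and reassemble via Proposition~\ref{prop-Borel}. But the crucial estimate is organized differently, and as written the induction you propose does not close.

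You claim, by induction on $|m|$, the uniform bound $|\widehat{\phi}_{i,m}(\xi)|\le K|\xi|^{1-\gamma}e^{c|\xi|^{\gamma}}/(R^{|m|}\Gamma(1/\gamma))$ with $K,c,R$ fixed once and for all. The obstacle you identify — the linear $|m|$-growth of $\tilde\sigma_{i,m}$ — is real but is the easier one; \eqref{Ine} handles it, as you say. The harder obstacle is that the right-hand side $\widehat{G}_{i,m}$ is built from nested $\gamma$-convolutions of the lower-order $\widehat{\phi}_{i',m'}$ (and, inside the Neumann iteration, of the Volterra kernel with itself), and by Lemma~\ref{lem-prod} a $k$-fold convolution of objects bounded by $|\xi|^{1-\gamma}e^{c|\xi|^{\gamma}}/\Gamma(1/\gamma)$ is bounded by $|\xi|^{k-\gamma}e^{c|\xi|^{\gamma}}/\Gamma(k/\gamma)$ — the \emph{power} of $|\xi|$ grows, not the exponential constant. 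Converting this back to the uniform exponent $|\xi|^{1-\gamma}$ so that the inductive hypothesis is reproduced forces you to enlarge the exponential constant: $|\xi|^{k-\gamma}/\Gamma(k/\gamma)\le C|\xi|^{1-\gamma}e^{\delta|\xi|^{\gamma}}$ only holds uniformly in $k$ after paying some $\delta>0$ in the exponential. Since the induction runs over infinitely many $m$, and the freshly produced $\widehat{\phi}_{i,m}$ must then be fed back as input for larger $|m|$, this degradation of the exponential constant recurs at every stage and the bound with a fixed $c$ is never actually established.

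This is exactly what the paper's auxiliary-parameter device is for. Writing $u_i(x,Z,\varepsilon)=\sum C_{i,p,q}(x)Z^{p}\varepsilon^{q}$, grading by $\varepsilon$ finitizes both the Neumann iteration in the $\lambda_i^{*}$-terms and the nonlinear composition, so that $\widehat{C}_{i,p,q}$ can be bounded by $M_{i,p,q}|\xi|^{q-\gamma}e^{c|\xi|^{\gamma}}/\Gamma(q/\gamma)$ with $c$ held \emph{fixed} throughout: the growing $|\xi|$-power is allowed to accumulate in the index $q$ instead of being forced back to $|\xi|^{1-\gamma}$. Convergence of the double series $\sum M_{i,p,q}Z^{p}s^{q}$ is then established once, by a majorant/implicit-function argument (Lemma~\ref{lem-2}), and the single exponential-constant enlargement happens only at the very end when one sums over $q$ and takes the Laplace transform. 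You should either adopt that bookkeeping, or replace your uniform ansatz by one that lets the $|\xi|$-power depend on an iteration index and then sum with a majorant; in the form you wrote it, the inductive step does not reproduce its own hypothesis.
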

	 		\begin{rem}	{\rm (1)}\; $\Phi(x,Z)$ is determined by solving a system of partial differential 
	 		equations \eqref{Eq-Phi} in section 3 and depends on choice of ${\theta}_*$. \label{rem-2}  \\
	 				{\rm (2)} Theorem \ref{main-th} means that there exist solutions of \eqref{Eq-Y} 
	 				with exponential series, called often transseries {\rm (see also Remarks 3.4)},  
				\begin{equation}	\left \{
					\begin{aligned}\ &
				 z_i(x)=A_i\exp(\int^x\frac{\Lambda_i(\tau)}{\tau^{\gamma+1}}d\tau )\;\; (1\leq i\leq n'),\\
				 & y_i= z_i(x)+ \sum_{|p| \geq 2}C_{i,p}(x)Z(x)^p   \;\; (1\leq i\leq n'), \\
				 & y_i= \sum_{|p| \geq 2}C_{i,p}(x)Z(x)^p   \;\; (i> n'),\quad 
				 C_{i,p}(x)\in {\mathscr O}_{\{1/\gamma\}}(S_{\{0\}}(I_{\epsilon})).
				\end{aligned}\right .
			\end{equation}
			{\rm (3)} If $\Lambda'=\{\lambda_1\}, \lambda_1\not=0$, then Condition 1 is obviously holds and
			Condition 2 is $m_1\lambda_1-\lambda_i\not=0$ for $m_1\geq 2$ and $i=2,\cdots,n$. 
				\end{rem}
\section{\normalsize Proof of Theorem \ref{main-th} }	
	 		Our assumptions are  
	 	\begin{equation}\left \{
	 		\begin{aligned}
	 			 		 \ &
		 		\{f_{i}(x,Y)\}_{1\leq i \leq n}\subset \mathscr O_{\{1/\gamma\}}
		 		(S_0(I)\times \{|Y|<R\}), \\
		 		 &I=({\theta}_*-\delta_*,{\theta}_*+\delta_*), 
		 		 \widehat{I}=({\theta}_*-{\epsilon}_*,{\theta}_*+{\epsilon}_*),\;
		  		 \delta_*=\pi/2\gamma+{\epsilon}_*, \\ &
		 		  L(\gamma \theta)\cap \mathbf \Lambda^{\sharp}=\emptyset, \; for \;\theta\in \widehat{I}
	 		\end{aligned} \right . \label{f_i}
	 	\end{equation}
		 	with	$f_{i}(0,Y)=0$, $f_{i}(x,Y)=O(|Y|^2)$ and \eqref{Ine} holds.
	\subsection{\normalsize Construction of $\Phi(x,Z)$-I }	 	
	  	In order to construct $\Phi(x,Z)$ in Theorem \ref{main-th} we introduce an auxiliary
  		system of nonlinear partial differential equations
		\begin{equation}\tag{Eq-$\Phi$} 
				\left \{
			\begin{aligned}\ &
				\Phi(x,Z)=(\phi_1(x,Z),\phi_2(x,Z),\cdots, \phi_n(x,Z))\\ &		
	  		x^{1+\gamma}\frac{\partial \phi_i}{\partial x}+
	  		\sum_{j=1}^{n'} \lambda_{j}(x)z_j	\frac{\partial \phi_i}{\partial z_j}	  	
	    	-\lambda_{i}(x){\phi}_i=f_i(x,\Phi) \;\; 1\leq i \leq n \\ &
	    	(x,Z)=(x,z_1,\cdots,z_{n'})\in {\mathbb C\times \mathbb C^{n'}},
	 		\end{aligned}\right . \label{Eq-Phi}
	 	\end{equation} 
				 A similar type equation appeared in \cite{Ouchi-0}.    
	    	Assume we find a nice solution $\Phi(x,Z)$ of \eqref{Eq-Phi}. Let  
	    	$Z(x)=(z_1(x), \cdots,z_{n'}(x))$ be a solution of
	  \begin{equation}\tag{Eq-Z}
	    		 	\begin{aligned}
		 		 	x^{1+\gamma}\frac{dz_i}{dx}= \lambda_i(x)z_i \quad 1\leq i \leq n'   \label{Eq-Z}
		 	\end{aligned}
  	\end{equation} 
	    	and $Y(x)=\Phi(x,Z(x))$ ($y_{i}(z)=\phi_i(x,z_1(x),\cdots,z_{n'}(x))$). Then we have 
	     \begin{equation*}
	    	\begin{aligned}
	    		x^{1+\gamma}\frac{dy_i}{dx}& =x^{1+\gamma}\frac{\partial\phi_i}{\partial x}+
	    		x^{1+\gamma}\sum_{j=1}^{n'}\frac{\partial\phi_i}{\partial z_j}\frac{dz_j}{dx}\\
	    		&=x^{1+\gamma}\frac{\partial\phi_i}{\partial x}+
	    			\sum_{j=1}^{n'}\lambda_{j}(x)z_{j}\frac{\partial\phi_i}{\partial z_j}
	    		=\lambda_{i}(x){\phi}_i+f_i(x,\Phi)
	    		\\&= \lambda_{i}(x){y}_i+f_i(x,Y(x))	
	      \end{aligned}
	  \end{equation*}
	  		and $Y(x)$ will be a solution of \eqref{Eq-Y}. \par
		    	We construct $\Phi(x,Z)$ as follows. Let $\Psi(Z)=( \psi_1(Z),\psi_2(Z),\cdots,\psi_n(Z))
	    	=(z_1,z_2,\cdots,z_{n'},0,\cdots,0)$ and $\Phi(x,Z)=U(x,Z)+\Psi(Z)$. 
				We change \eqref{Eq-Phi} to a system of equations of $U(x,Z)=(	u_1(x,Z),\cdots,	u_n(x,Z))$.
				By $\big (\lambda_{i}(x)-\sum_{j=1}^{n'} \lambda_{j}(x)z_j
	  			\frac{\partial }{\partial z_j}\big)\psi_i(Z)=0$ we have
		\begin{equation}\tag{Eq-U}
			\begin{aligned}\ & \big(
	  			x^{1+\gamma}\frac{\partial }{\partial x}+
	  			\sum_{j=1}^{n'}\lambda_{j}(x)z_j\frac{\partial}{\partial z_j}-\lambda_{i}(x)\big)u_i
	  			=f_i(x,U+\Psi(Z)).
	    \end{aligned} \label{Eq-U0}
	  \end{equation}
	  	Let $f_i(x,\Phi)=\sum_{\begin{subarray}\ m\in \mathbb N^{n}, |m|\geq 2 \end{subarray}}	
	  	f_{i,m}(x)\Phi^m$. Then there exist 
	  	$g_{i,k,\ell}(x)\in \mathscr{O}_{\{1/\gamma\}}(S_{0}(I))$, \; 
	  	$(k,\ell)\in \mathbb N^{n'}\times \mathbb N^{n}$,  such that 
	  \begin{equation*}
	  	\begin{aligned}\ f_{i}(x,U+\Psi(Z))= &
	  		\sum_{\begin{subarray}\ m\in \mathbb N^{n}, |m|\geq 2 \end{subarray}}	
	  		f_{i,m}(x)(U+\Psi(Z))^m \\
			= &  \sum_{\begin{subarray}\ (k,\ell)\in \mathbb N^{n'}\times \mathbb N^{n},		 
							\\ |k|+|\ell|\geq 2, \ell\not=0 
								\end{subarray}}  g_{i,k,\ell}(x)Z^k U^{\ell}+ f_{i}(x,\Psi(Z))
	  	\end{aligned}
	  \end{equation*}
	    with $g_{i,k,\ell}(0)=0$ ($\because f_i(0,Y)=0$) and 
	    $|f_{i}(x,\Psi(Z))|\leq M|x||Z|^2$. 
		Let 	 
	 	\begin{equation}
	 		\begin{aligned} \ &
	 		L=	x^{1+\gamma}\frac{\partial} {\partial x}+
	  	\sum_{j=1}^{n'} \lambda_{j}z_j	\frac{\partial}{\partial z_j}-\lambda_{i}
	  	\quad \lambda_i=\lambda_{i}(0), \\
	  	& 	\lambda_i^{*}(x)=	\lambda_i(x)-\lambda_i,\;\;
	  	h_i(x,Z)= f_{i}(x,\Psi(Z))=\sum_{|p|\geq 2}h_{i,p}(x)Z^p.  	
	  	\end{aligned}
		\end{equation}
			Then $	\lambda_i^{*}(0)=0$ and \eqref{Eq-U0} is 
	 \begin{equation}
	 	\begin{aligned}
	 	Lu_i= &-(\sum_{j=1}^{n'} \lambda_{j}^*(x)z_j	\frac{\partial }{\partial z_j}	  
	 		    -\lambda_{i}^*(x))u_i \\ +&
	 	   \sum_{\begin{subarray}\ (k,\ell) \in \mathbb N^{n'}\times \mathbb N^{n},
							\\ |k|+|\ell|\geq 2,\ell\not=0
								\end{subarray}} 
						g_{i,k,\ell}(x)Z^k U^{\ell}+ h_i(x,Z).
	 	\end{aligned} \label{Eq-U''}
	 \end{equation}
		We introduce an auxiliary  parameter $\varepsilon$ in order to show successive process 
		of construction of a solution clearly,
		 \begin{equation}\tag{Eq-U$_{\varepsilon}$}
	 	\begin{aligned}
	 	Lu_i
	 		= &-\varepsilon(\sum_{j=1}^{n'} \lambda_{j}^*(x)z_j	\frac{\partial}{\partial z_j}
	 	 		    -\lambda_{i}^*(x))u_i \\ 
	 	  &	+ \varepsilon\sum_{\begin{subarray}\ (k,\ell)\in \mathbb N^{n'}\times \mathbb N^{n},
							\\ |k|+|\ell|\geq 2, \ell\not=0
								\end{subarray}} 
						g_{i,k,\ell}(x)Z^k U^{\ell}+ \varepsilon h_i(x,Z). 
	 	\end{aligned} \label{Eq-e}
	 \end{equation}
	 	 If $\varepsilon=1$, \eqref{Eq-e} coincides with \eqref{Eq-U0} and \eqref{Eq-U''}.
	 	  Our purpose is to find 
		 a solution $U(x,Z,\varepsilon)=(	u_1(x,Z,\varepsilon),\cdots, 	u_n(x,Z,\varepsilon))$
		 of \eqref{Eq-e}. It is constructed as follows. 
	 		Let 
	 \begin{equation}
	 	\begin{aligned}
		 	u_i(x,Z,\varepsilon)=\sum_{\begin{subarray}\ (p,q) \in \mathbb N^{n'}\times \mathbb N
		 				\\ \;\; q \geq 1 \end{subarray}} 
							C_{i,p,q}(x)Z^p\varepsilon^{q}, \quad 1\leq i \leq n,
		 	\end{aligned}
	 \end{equation}
		 and note $U^{\ell}=\prod_{s=1}^{n}u_s^{\ell_s}\; \ell=(\ell_1,\cdots,\ell_s, \cdots,\ell_n)$ and
	 \begin{equation*}
	 	\begin{aligned}
		 	u_s^{\ell_s}=\prod_{j=1}^{\ell_s} \big(\sum_{\begin{subarray}\ (p^{s,j},q^{s,j}) 
		 	\in \mathbb N^{n'}\times \mathbb N
		 				\\ \;\; q^{s,j} \geq 1 \end{subarray}} 
							C_{s,p^{s,j},q^{s,j}}(x)Z^{p^{s,j}}\varepsilon^{q^{s,j}}\big).
	 	\end{aligned}
	 \end{equation*}
	   By substituting $u_{i}(x,Z,\varepsilon)$ into \eqref{Eq-e} and comparing the coefficient of
	   $Z^p{\varepsilon}^q$, we get 
	 \begin{equation}\tag{Eq-C}
	 		\begin{aligned}\	& 
		 	(x^{1+\gamma}\frac{d}{dx}+\sum_{j=1}^{n'} p_j\lambda_j-\lambda_i)C_{i,p,q}(x)=
		 	-\big(\sum_{j=1}^{n'} p_j\lambda_{j}^*(x)-\lambda_{i}^*(x)\big)C_{i,p,q-1}(x)
				\\ & +
			\sum_{\begin{subarray}\ (k,\ell)\in \mathbb N^{n'}\times \mathbb N^{n} 
							\\ |k|+|\ell|\geq 2, \ell \not=0 \end{subarray}}  
							g_{i,k,\ell}(x) \Big(\sum_{\begin{subarray}\
							\sum_{s=1}^n\big( \sum_{j=1}^{{\ell}_s} p^{s,j}\big)+k=p \\
							\sum_{s=1}^n\big( \sum_{j=1}^{{\ell}_s} q^{s,j}\big)+1=q	
								\end{subarray}}
		  		\prod_{j=1}^{\ell_1} C_{1,p^{1,j},q^{1,j}}(x)
			\\ &	\times		\prod_{j=1}^{\ell_2} C_{2,p^{2,j},q^{2,j}}(x)
								\cdots \cdots
						\prod_{j=1}^{\ell_n} C_{n,p^{n,j},q^{n,j}}(x) \Big)+
			 		\delta_{q,1}h_{i,p}(x),
	 	\end{aligned} \label{Eq-Cv}
	 \end{equation}
	 	 		where  $k, p^{s,j},p \in \mathbb N^{n'}$, $q,q^{s,j}\in \mathbb N$ and $\delta_{i,j}$ is Kronecker's 
	 	 		delta.
\subsection{\normalsize Construction of $\Phi(x,Z)$-II}		  			
 		We try to construct $C_{i,p,q}(x)$ by Laplace integral
	 \begin{equation}
	 		C_{i,p,q}(x)=
	 			\int_{0}^{\infty e^{\sqrt{-1}\theta} }e^{-(\frac{\xi}{x})^{\gamma}}
	 		\widehat{C}_{i,p,q}(\xi)d\xi^{\gamma}\quad \theta \in \widehat{I}	 	\label{L}	
	 \end{equation}
		 We use the following notation 
		 \begin{equation*}
		 	 	 W_{1}(\xi)\underset{\gamma}{*}W_2(\xi)\underset{\gamma}{*}\cdots
	  	\underset{\gamma}{*}W_{N}(\xi)
	  	=\underset{*\gamma}{\underbrace{\prod_{i=1}^{N}W_{i}(\xi)}}.
		 \end{equation*}
		By using \eqref{prod'},	we get the following convolution equations 
		from \eqref{Eq-Cv},  
	 \begin{equation}\tag{Eq-$\widehat{C}$}
	 \begin{aligned}\ &
	 		(\gamma \xi^{\gamma}+\sum_{j=1}^{n'} p_j\lambda_j-\lambda_i)	\widehat{C}_{i,p,q}(\xi)
	 		= -\big(\sum_{j=1}^{n'} p_j\widehat{\Lambda_{j}^*}(\xi)-
			\widehat{\Lambda_{i}^*}(\xi)\big)\underset{\gamma}{*}\widehat{C}_{i,p,q-1}(\xi)
			\\ &  
		  +\sum_{\begin{subarray}\ (k,\ell)\in \mathbb N^{n'}\times \mathbb N^{n} 
							\\ |k|+|\ell|\geq 2, \ell\not=0\end{subarray}}  
							\widehat{g}_{i,k,\ell}(\xi)\underset{\gamma}{*}\Big( \sum_{\begin{subarray}\
							\sum_{s=1}^n( \sum_{j=1}^{\ell_{s}} p^{s,j})+k=p \\
							\sum_{s=1}^n( \sum_{j=1}^{\ell_{s}} q^{s,j})+1=q	
								\end{subarray}}
			 \\ & \quad	
			 	\underset{*\gamma}{\underbrace{\prod_{j=1}^{\ell_1}\widehat{C}_{1,p^{1,j},q^{1,j}}(\xi)
						\prod_{j=1}^{\ell_2}\widehat{C}_{2,p^{2,j},q^{2,j}}(\xi)
								\cdots \cdot
						\prod_{j=1}^{\ell_n}\widehat{C}_{n,p^{n,j},q^{n,j}}(\xi) 
						}}\Big) +\delta_{q,1}\widehat{h}_{i,p}(\xi).
	 	\end{aligned} \label{Eq-c1}
	 \end{equation}
		 The main result of this subsection is Proposition \ref{prop-M} concerning existence and estimate 
		 of $\widehat{C}_{i,p,q}(\xi)$.
		 We proceed to solve \eqref{Eq-c1}. There are 2 steps, 
		 to determine $\widehat{C}_{i,p,q}(\xi)$ and to estimate them. \\
 (I)\;{\it Determination of $\widehat{C}_{i,p,q}(\xi)$}. 
 		 		We notice that there exists a constant $C>0$ such 
			  that for $ \xi \in S(\widehat{I})\cup\{|\xi|<r\}$ and $|p|\geq 2$	
	  	\begin{equation}
	  		\begin{aligned}
	  		|\gamma \xi^{\gamma}+\sum_{j=1}^{n'} p_j\lambda_j-\lambda_i|\geq
	  			C (|\xi|^{\gamma}+|p|). 
	   		\end{aligned}\label{est-inv}
	  	\end{equation}
			   Let $q=1$. Then $\widehat{C}_{i,p,1}=0$ for $|p|\leq 1$ and 
		  \begin{equation}
		  	\begin{aligned}\ &
				  \widehat{C}_{i,p,1}(\xi)=\frac{	\widehat{h}_{i,p}(\xi)}
				  		{(\gamma \xi^{\gamma}+\sum_{j=1}^{n'} p_j\lambda_j-\lambda_i)}
		  	\end{aligned}
		  \end{equation}
		    for $|p|\geq 2$. $ \xi^{\gamma-1}\widehat{C}_{i,p,1}(\xi)$ is holomorphic at $\xi=0$. 
		    Assume $ \{ \widehat{C}_{j,r,s}(\xi) \}_{j=1}^n$  $(s<q) $ are determined 
		    such that $ \widehat{C}_{j,r,s}(\xi)=0$ for $|r|\leq 1$. 
		    We denote the right hand side of \eqref{Eq-c1} by
		 \begin{equation}
		 		\begin{aligned}\ &
				 	{\mathcal F}_{i,p,q}\big(\xi, \{\widehat{C}_{j,r,s}\}_{j=1}^n, (r,s)\in \Sigma_{p,q}\big), \\
				 	 & \Sigma_{p,q}=\{(r,s);2 \leq |r|\leq |p|, 1\leq s \leq q-1 \}.
		 	 \end{aligned} 
		 \end{equation}	
	 	 	 Then  $\{\widehat{C}_{i,p,q}(\xi)\}_{i=1}^n$  are  determined by
		 \begin{equation}
		  	\widehat{C}_{i,p,q}(\xi)=\frac{	{\mathcal F}_{i,p,q}(\xi,
		  	 \{\widehat{C}_{j,r,s}\}_{j=1}^n, (r,s)\in \Sigma_{p,q})}
		  	{(\gamma \xi^{\gamma}+\sum_{j=1}^{n'} p_j\lambda_j-\lambda_i)}
		  \end{equation}
			  and $\widehat{C}_{i,p,q}(\xi)=0$ for $|p|\leq 1$.
			  Thus $\widehat{C}_{i,p,q}(\xi)\; (|p|\geq 2, q\geq 1)$ are successively determined and they are 
			  holomorphic in  $(\{0 <|\xi|<r\}\cup S(\widehat{I})$. Moreover 
			  {$\xi^{\gamma-1}\widehat{C}_{i,p,q}(\xi)$ is holomrphic at $\xi=0$.  \\
	 (II)\; {\it Estimate of $\widehat{C}_{i,p,q}(\xi)$.}\;
				Let ${\epsilon}>0$ be a small constant. We obtain estimates of $\widehat{C}_{i,p,q}(\xi)$ in 
				a  subsector $S(\widehat{I}_{\epsilon}) \subset S(\widehat{I})$. 
				We often apply Lemma \ref{lem-prod} to
				estimate. We have 
	  \begin{prop}Let $\epsilon>0$ be an arbitrary small constant. Then 
		  	there exist positive constants $r, M_{i,p,q}$ and $c$ depending on ${\epsilon}$ such that
	  	\begin{equation}
	  		\begin{aligned}
		  		|\widehat{C}_{i,p,q}(\xi)|\leq \frac{M_{i,p,q}|\xi|^{q-\gamma}e^{c|\xi|^{\gamma}}}
		  		{\Gamma(q/\gamma)} \quad  \xi \in \{0<|\xi|<r \}\cup S(\widehat{I}_{\epsilon})
	  		\end{aligned}
	  	\end{equation}
		  	and the series $\sum_{\begin{subarray}\ (p,q)\in {\mathbb N^{n'}}\times  {\mathbb N}\\  |p|\geq 2, 
	  							  q\geq 1	\end{subarray}}
	  						 M_{i,p,q}Z^p s^q $ converges in a neighborhood of $(Z,s)=(0,0)
	  						 \in {\mathbb C^{n'}}\times{\mathbb C} $. \label{prop-M}
	  \end{prop}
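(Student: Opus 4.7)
The plan is induction on $q$, using Lemma \ref{lem-prod} to propagate the bound through the convolution structure, \eqref{est-inv} to control the denominator, and then a majorant-equation argument combined with the analytic implicit function theorem to obtain convergence of $\sum M_{i,p,q}Z^p s^q$.

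Fix a small $\epsilon>0$ and work on $D=\{0<|\xi|<r\}\cup S(\widehat{I}_\epsilon)$. For the base case $q=1$, equation \eqref{Eq-c1} gives $\widehat{C}_{i,p,1}(\xi)=\widehat{h}_{i,p}(\xi)/(\gamma\xi^\gamma+\sum p_j\lambda_j-\lambda_i)$. Since $h_i(x,\Psi(Z))\in\mathscr{O}_{\{1/\gamma\}}(S_0(I)\times\Omega)$ vanishes at $x=0$, Lemma \ref{lem-Cau} yields $|\widehat{h}_{i,p}(\xi)|\leq K_\epsilon|\xi|^{1-\gamma}e^{c|\xi|^\gamma}/(R^{|p|}\Gamma(1/\gamma))$, while \eqref{est-inv} bounds the denominator below by a positive constant. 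This gives the base bound with $M_{i,p,1}$ comparable to $R^{-|p|}$.

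For the inductive step, suppose the bound holds for every $(p',q')$ with $q'<q$. The heart of the argument is applying Lemma \ref{lem-prod} to each convolution on the right-hand side of \eqref{Eq-c1}; the crucial bookkeeping is that the weights $s_j$ entering the lemma always sum to $q$. In the nonlinear term, $\widehat{g}_{i,k,\ell}$ contributes weight $s=1$ by Lemma \ref{lem-Cau} (applicable because $g_{i,k,\ell}(0)=0$), and each $\widehat{C}_{s,p^{s,j},q^{s,j}}$ contributes weight $q^{s,j}$ by the inductive hypothesis; since $\sum q^{s,j}+1=q$, the full convolution is bounded by a constant times $|\xi|^{q-\gamma}e^{c|\xi|^\gamma}/\Gamma(q/\gamma)$. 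The linear term involving $\widehat{\Lambda_j^*}$ is handled identically, since $\Lambda_j^*$ is a polynomial vanishing at $x=0$ and is convolved with $\widehat{C}_{i,p,q-1}$, giving weights $1+(q-1)=q$. Dividing by the denominator, bounded below by a positive constant, produces the desired shape and defines $M_{i,p,q}$ by a recursion having the same algebraic form as \eqref{Eq-Cv} with positive constants in place of the coefficient functions.

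Finally, multiplying the recursion for $M_{i,p,q}$ by $Z^p s^q$ and summing turns it into a system of analytic fixed-point equations $M_i(Z,s)=\Psi_i(Z,s,M_1(Z,s),\ldots,M_n(Z,s))$, where each $\Psi_i$ is analytic near the origin, vanishes there, and has vanishing first-order partial derivatives with respect to the $M$-variables at $0$ (since each nonlinear term carries a factor $s$ and the inhomogeneity is $O(|Z|^2)$). The analytic implicit function theorem then produces $M_i(Z,s)$ analytic in a neighborhood of $(0,0)$, and uniqueness of the Taylor coefficients identifies them with the constants $M_{i,p,q}$, yielding convergence of the stated series. The main obstacle is the combinatorial bookkeeping for the convolution weights in the nonlinear sum: one must verify uniformly in all admissible multi-indices $k,\ell,\{p^{s,j}\},\{q^{s,j}\}$ satisfying the constraints of \eqref{Eq-c1} that the weights entering Lemma \ref{lem-prod} always add to exactly $q$, so the precise Gevrey shape $|\xi|^{q-\gamma}/\Gamma(q/\gamma)$ is reproduced at every induction step. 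Once this accounting is settled, the convergence of the majorant series is a routine application of the implicit function theorem.
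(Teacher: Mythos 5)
Your proposal is correct and follows essentially the same route as the paper: the base case $q=1$ from Lemma \ref{lem-Cau} and \eqref{est-inv}, the inductive step via the weight bookkeeping in Lemma \ref{lem-prod} (weights summing to $q$ in each convolution), and then convergence of $\sum M_{i,p,q}Z^ps^q$ by turning the recursion into the majorant fixed-point system $u_i = sAu_i + sCG_i(Z,U)+sCH_i(Z)$ solved by the analytic implicit function theorem (the paper's Lemma \ref{lem-2}). The only cosmetic difference is that the paper writes out the recursion \eqref{M} and verifies $u_{i,p,q}=M_{i,p,q}$ coefficient-by-coefficient, while you invoke uniqueness of Taylor coefficients, which amounts to the same thing.
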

	  	  Before the proof we note inequality \eqref{est-inv} and 
	  	  that there exist constants $G_{i,k,\ell}, H_{i,p}$ and $c$ such that
	 \begin{equation}\left \{
	 		\begin{aligned}
	 		\ &	 |\widehat{g}_{i,k,\ell}(\xi)|\leq  
	 			 	\frac{G_{i,k,\ell}|\xi|^{1-\gamma}e^{c|\xi|^{\gamma}}}{\Gamma(1/\gamma)} \\ &
	 	    |\widehat{h}_{i,p}(\xi)|\leq 
	 	    	\frac{H_{i,p} |\xi|^{1-\gamma}e^{c|\xi|^{\gamma}}}{\Gamma(1/\gamma)} \qquad 
	 	    	\xi \in  \{0<|\xi|<R \}\cup S(\widehat{I}_{\epsilon}). 
	 	  \end{aligned} \right. 	
	 \end{equation}
			Here $\sum_{\begin{subarray}\ (k,\ell)\in\mathbb{N}^{n'}\times \mathbb{N}^n 
		 \\ |k|+|\ell| \geq 2, \ell \not =0 
			 \end{subarray}} G_{i,k,\ell}Z^k U^{\ell} $	
		 converges in a neighborhood of $(Z,U)=(0,0)\in \mathbb{C}^{n'}\times\mathbb{C}^n$ and 	
		 $ \sum_{|p|\geq 2}H_{i,p} Z^p$ converges in a neighborhood of $Z=0\in \mathbb{C}^{n'}$ 
		 (see Lemma \ref{lem-Cau}). \par  
			{\it Proof of Proposition \ref{prop-M}}.
			 The proof consists of 2 parts, (1) determination of $M_{i,p,q}$ and (2) convergence of
			the series. \\ \; 
		(1) {\it Determination of $M_{i,p,q}$.} \;  Let $0<r<R$. First we show how to determine  
	   		$M_{i,p,q}\; (p\in {\mathbb N}^{n'} \;|p|\geq 2,q\geq 1)$ and study their relations. 	  	 
	  		For $ q=1$ and $|p|\geq 2$ there exist a constant $C>0$ such that
	    \begin{equation*}
	  			\begin{aligned}
	  				|\widehat{C}_{i,p,1}(\xi)|=\frac{	|\widehat{h}_{i,p}(\xi)|}
	  					{|\gamma \xi^{\gamma}+\sum_{j=1}^{n'} p_j\lambda_j-\lambda_i|}
	  				\leq C	H_{i,p}\frac{|\xi|^{1-\gamma}e^{c|\xi|^{\gamma}}}{\Gamma(1/\gamma)}
	  			\end{aligned}
	  	\end{equation*}
    			and take $M_{i,p,1}=CH_{i,p}$. Assume $ \{ M_{i,p,q'} \}_{i=1}^n$  $(q'<q )$ are
	     		determined such that 
	  	\begin{equation}
	  		\begin{aligned}
	  		|\widehat{C}_{i,p,q'}(\xi)|\leq \frac{M_{i,p,q'}|\xi|^{q'-\gamma}e^{c|\xi|^{\gamma}}}
	  		{\Gamma(q'/\gamma)} \quad  \xi \in \{0<|\xi|<r \}\cup S(\widehat{I}_{\epsilon}).
	  		\end{aligned}
	  	\end{equation}
	     Let us notice relation \eqref{Eq-c1}. It follows from Lemma \ref{lem-prod} and \eqref{est-inv} that 
	      there exists a constant $A>0$ such that
	    \begin{equation}
	    	\begin{aligned}
	   	D_1= \frac{\big|\big(\sum_{j=1}^{n'} p_j|\widehat{\Lambda_{j}^*}(\xi)-
			\widehat{\Lambda_{i}^*}(\xi)\big)\underset{\gamma}{*}\widehat{C}_{i,p,q-1}(\xi)\big|}
			{|\gamma \xi^{\gamma}+\sum_{j=1}^{n'} p_j\lambda_j-\lambda_i|}
			 \leq  \frac{AM_{i,p,q-1}|\xi|^{q-\gamma}e^{c|\xi|^{\gamma}}}
	  		{\Gamma(q/\gamma)}.
	 	    	\end{aligned}
	    \end{equation}
	  Let 
	  \begin{equation*}
	  	\begin{aligned}
	  	D_2 & ={|\gamma \xi^{\gamma}+\sum_{j=1}^{n'} p_j\lambda_j-\lambda_i|^{-1}}
	  				\Big |\sum_{\begin{subarray}\ (k,\ell)\in \mathbb N^{n'}\times \mathbb N^{n} 
							\\ |k|+|\ell|\geq 2 ,\ell\not=0\end{subarray}}  
							\widehat{g}_{i,k,\ell}^{*}(\xi)\underset{\gamma}
							{*}\Big( \sum_{\begin{subarray}\
							\sum_{s=1}^n\big( \sum_{j=1}^{\ell_{s}} p^{s,j}\big)+k=p\\
							\sum_{s=1}^n\big( \sum_{j=1}^{\ell_{s}} q^{s,j}\big)+1=q	
								\end{subarray}}
			 \\ & \quad \underset{*\gamma}{\underbrace{
			  		\prod_{j=1}^{\ell_1}\widehat{C}_{1,p^{1,j},q^{1,j}}(\xi)
						\prod_{j=1}^{\ell_2}\widehat{C}_{2,p^{2,j},q^{2,j}}(\xi)
						\cdots \cdots
						\prod_{j=1}^{\ell_n}\widehat{C}_{n,p^{n,j},q^{n,j}}(\xi)}} 
						\Big) \Big|.
	  	\end{aligned}
	  \end{equation*}
	  Since
	  	  \begin{equation*}
	  	\begin{aligned} \ & \Big|
	  	\underset{*\gamma}{
			  	\underbrace{	{\prod_{j=1}^{\ell_1}}\widehat{C}_{1,p^{1,j},q^{1,j}}(\xi)
						\prod_{j=1}^{\ell_2}\widehat{C}_{2,p^{2,j},q^{2,j}}(\xi)
						\cdots \cdots
						\prod_{j=1}^{\ell_n}\widehat{C}_{n,p^{n,j},q^{n,j}}(\xi) 
						\Big |}} \\
					 \leq	& {\prod_{j=1}^{\ell_1}}M_{1,p^{1,j},q^{1,j}}
						\prod_{j=1}^{\ell_2}M_{2,p^{2,j},q^{2,j}}
						\cdots \cdots
						\prod_{j=1}^{\ell_n}M_{n,p^{n,j},q^{n,j}} \\
						& \times \big(\underset{*\gamma}
					{\underbrace{{\prod_{j=1}^{\ell_1}}\frac{|\xi|^{q^{1,j}-\gamma}e^{c|\xi|^{\gamma}}}
						{\Gamma(q_{1,j}/\gamma)}
						\prod_{j=1}^{\ell_2}\frac{|\xi|^{q^{2,j}-\gamma}e^{c|\xi|^{\gamma}}}
						{\Gamma(q_{2,j}/\gamma)}
						\cdots \cdots
						\prod_{j=1}^{\ell_n}\frac{|\xi|^{q^{n,j}-\gamma}e^{c|\xi|^{\gamma}}}
						{\Gamma(q^{n,j}/\gamma)}\big)}},
	  	\end{aligned}
	  \end{equation*}	
	  we have
	    \begin{equation}
	  	\begin{aligned}
	  	D_2 \leq &C \sum_{\begin{subarray}\ (k,\ell)\in \mathbb N^{n'}\times \mathbb N^{n} 
	  					\\ |k|+|\ell|\geq 2,\ell\not=0 \end{subarray}}  
							{G}_{i,k,\ell}\Big( \sum_{\begin{subarray}\
							\sum_{s=1}^n\big( \sum_{j=1}^{\ell_{s}} p^{s,j}\big)+k=p\\
							\sum_{s=1}^n\big( \sum_{j=1}^{\ell_{s}} q^{s,j}\big)+1=q
								\end{subarray}}
							\prod_{j=1}^{\ell_1}M_{1,p^{1,j},q^{1,j}}
						\prod_{j=1}^{\ell_2}M_{2,p^{2,j},q^{2,j}}	
			 \\ & 		\cdots \cdots
						\prod_{j=1}^{\ell_n}M_{n,p^{n,j},q^{n,j}} 
						\Big)\Big |\frac{|\xi|^{q-\gamma}}{\Gamma(q/\gamma)}e^{c|\xi|^{\gamma}}.
	  	\end{aligned}
		\end{equation}
			  Hence we define for $q\geq 2$
	  \begin{equation}
	  	\begin{aligned}
	  	M_{i,p,q}= & A M_{i,p,q-1} 
		\\ & +C
			\sum_{\begin{subarray}\ (k,\ell)\in \mathbb N^{n'}\times \mathbb N^{n} 
							\\ |k|+|\ell|\geq 2 ,\ell\not=0
							\end{subarray}}  
							{G}_{i,k,\ell}\big( \sum_{\begin{subarray}\
							\sum_{s=1}^n\big( \sum_{j=1}^{\ell_{s}} p^{s,j}\big)+k=p \\
							\sum_{s=1}^n\big( \sum_{j=1}^{\ell_{s}} q^{s,j}\big)+1=q	
								\end{subarray}}
							\prod_{j=1}^{\ell_1}M_{1,p^{1,j},q^{1,j}}
							\prod_{j=1}^{\ell_2}M_{2,p^{2,j},q^{2,j}}	
			 			\\ & \quad		\cdots \cdots 
							\prod_{j=1}^{\ell_n}M_{n,p^{n,j},q^{n,j}}\big ). 
					\end{aligned} \label{M}
	  \end{equation}	  
		 Thus we have 
	  \begin{equation}
	  	\begin{aligned}
	  	|C_{i,p,q}(\xi)|\leq \sum_{i=1}^2 D_i \leq 
	  	\frac{ M_{i,p,q}|\xi|^{q-\gamma}e^{c|\xi|^{\gamma}}}{\Gamma(q/\gamma)}.
	  	\end{aligned}
	  \end{equation} 
		 (2) {\it Convergence of $\sum_{|p|\geq 2,q\geq 1} M_{i,p,q}Z^p s^q$.}\;  
		    For this purpose we use the method of implicit functions used in \cite{GT} and others. 
			  We introduce holomorphic functions 
	  \begin{equation*}
	  	\begin{aligned}
	  		  	G_{i}(Z,U)&=\sum_{\begin{subarray}\ (k, \ell)\in {\mathbb N}^{n'}\times
	  		  	 {\mathbb N}^n, \\
	  		  								|k|+|\ell|\geq 2, \ell\not=0  \end{subarray}}G_{i,k,\ell}Z^k U^{\ell}, \quad	
	  		  	H_i(Z) =\sum_{|p|\geq 2} H_{i,p} Z^p.								
	  	\end{aligned}
	  \end{equation*}
	  at 
	  $(Z,U)=(0,0)\in {\mathbb C}^{n'}\times {\mathbb C}^n$, $Z=(z_1,\cdots,z_{n'})$.
	  $U=(u_1,\cdots,u_n)$.
			Let
	  \begin{equation}
	  	F_{i}(Z,s,U)=sAu_i+sCG_{i}(Z,U)+sCH_i(Z) \label{F_i}
	  \end{equation}
	   and consider a system of functional equations with unknown functions
		  $U=(u_1,u_2,\cdots,u_n)$
	  \begin{equation}
	  	\begin{aligned}
		  	u_i  =F_{i}(Z,s,U)  \quad i=1,\cdots,n.
	   	\end{aligned}\label{Eq-u}
	  \end{equation}
		  We have 
	  \begin{lem} There exists a unique solution   
			  $U(Z,s)=(u_1(Z,s), \cdots,u_n(Z,s))$ of \eqref{Eq-u} such that it is holomorphic at
			  $(Z,s)=(0,0)$ and $U(0,0)=0$ with expansion  
		  \begin{equation}
			   u_{i}(Z,s)=\sum_{|p|\geq 2,q\geq 1}u_{i,p,q} Z^p s^q. 
	 	 \end{equation}
	 		 Moreover $M_{i,p,q}=u_{i,p,q}$. \label{lem-2}
	 	\end{lem}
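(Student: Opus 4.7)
The plan is to obtain the solution $U(Z,s)$ by the holomorphic implicit function theorem and then identify the Taylor coefficients with the $M_{i,p,q}$ by comparing the recursion they both satisfy.

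First I would set $\Phi_i(Z,s,U):=u_i-F_i(Z,s,U)$. Since $F_i(Z,s,U)$ contains an overall factor $s$, we have $\Phi_i(0,0,0)=0$ and
\[
\frac{\partial \Phi_i}{\partial u_j}\Big|_{(0,0,0)}=\delta_{i,j}-s\,\partial_{u_j}(Au_i+CG_i)\Big|_{s=0}=\delta_{i,j}.
\]
The Jacobian is the identity at the origin, so the holomorphic implicit function theorem yields a unique holomorphic $U(Z,s)=(u_1(Z,s),\dots,u_n(Z,s))$ near $(0,0)$ with $U(0,0)=0$ solving \eqref{Eq-u}. The uniqueness assertion of the lemma follows immediately.

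Next I would verify that $U$ has the claimed expansion $u_i(Z,s)=\sum_{|p|\geq 2,\,q\geq 1}u_{i,p,q}Z^p s^q$. Because $F_i(Z,0,U)\equiv 0$, we have $U(Z,0)=0$, giving $u_{i,p,0}=0$, so only $q\geq 1$ terms appear. The vanishing of $u_{i,p,q}$ for $|p|\leq 1$ follows by induction on $q$: at $q=1$ only $sCH_i(Z)$ contributes, and $H_i(Z)=O(|Z|^2)$; for the inductive step, $sAu_i$ preserves the property, while in $sCG_i(Z,U)$ every monomial $Z^kU^\ell$ has $|k|+|\ell|\geq 2$ with $\ell\neq 0$, and by the inductive hypothesis each factor $u_{s,p^{s,j},q^{s,j}}$ carries $|p^{s,j}|\geq 2$, hence the total $Z$-degree is at least $2$.

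The core step is to match coefficients of $Z^p s^q$ in \eqref{Eq-u}. Using $U^\ell=\prod_{s=1}^n\prod_{j=1}^{\ell_s}u_s$ and expanding each factor $u_s=\sum u_{s,p^{s,j},q^{s,j}}Z^{p^{s,j}}s^{q^{s,j}}$, the coefficient of $Z^p s^q$ on the right hand side of \eqref{Eq-u} becomes
\[
\begin{aligned}
A\,u_{i,p,q-1}&+C\!\!\!\sum_{\substack{(k,\ell)\in\mathbb N^{n'}\times\mathbb N^n\\ |k|+|\ell|\geq 2,\ \ell\neq 0}}\!\!\!G_{i,k,\ell}\!\!\!\sum_{\substack{\sum_{s,j}p^{s,j}+k=p\\ \sum_{s,j}q^{s,j}+1=q}}\!\!\prod_{s=1}^n\prod_{j=1}^{\ell_s}u_{s,p^{s,j},q^{s,j}}\\
&+C\,H_{i,p}\,\delta_{q,1},
\end{aligned}
\]
which is identical, term by term, to the defining recursion \eqref{M} for $M_{i,p,q}$ (with base $M_{i,p,0}=0=u_{i,p,0}$ and $M_{i,p,1}=CH_{i,p}=u_{i,p,1}$). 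By strong induction on $q$, using only smaller values of $q$ on the right, we conclude $u_{i,p,q}=M_{i,p,q}$ for all $i,p,q$.

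The main obstacle, such as it is, is purely combinatorial: one must be careful that the way the product $\prod_s u_s^{\ell_s}$ expands into an iterated sum over indices $\{(p^{s,j},q^{s,j})\}$ produces exactly the constraint $\sum_{s,j}p^{s,j}+k=p$, $\sum_{s,j}q^{s,j}+1=q$ appearing in \eqref{M}. Everything else is an application of the implicit function theorem and formal coefficient comparison.
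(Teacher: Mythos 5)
Your proposal is correct and follows essentially the same route as the paper: apply the holomorphic implicit function theorem (noting the overall factor $s$ in $F_i$ forces $\partial F_i/\partial u_j=0$ at the origin), then read off the coefficient recursion from \eqref{Eq-u} and match it with \eqref{M} by induction on $q$. Your extra verification that $u_{i,p,q}=0$ for $|p|\leq 1$ is a small detail the paper leaves implicit, but the argument is the same.
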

	 		  Proposition \ref{prop-M} follows from Lemma \ref{lem-2}. \\
			{\it Proof of Lemma \ref{lem-2}.} 
		 		It follows from $F_{i}(0,0,0)=0$ and $(\frac{\partial F_i}{\partial u_j})=0$ at $(Z,s,U)=(0,0,0)$
		 		that there exists a unique solution
		 		 $U(Z,s)=(u_1(Z,s),\cdots,u_n(Z,s))$ of \eqref{Eq-u} with $u_i(0,0)=0$ for $1\leq i\leq n$.
		 	 Let  $	u_{i}(Z,s)=\sum_{|p|+q\geq 1}u_{i,p,q} Z^p s^q$. We have from \eqref{F_i} and
	 	 	\eqref{Eq-u}   
	 \begin{equation*}
		 \begin{aligned}
		 		{u}_{i,p,q}&=A{u}_{i,p,q-1}+\\ &  +C
				\sum_{\begin{subarray}\ (k,\ell)\in \mathbb N^{n'}\times \mathbb N^{n} 
							\\ |k|+|\ell|\geq 2, \ell\not=0 \end{subarray}}  
							{G}_{i,k,\ell}\big( \sum_{\begin{subarray}\
							\sum_{s=1}^n( \sum_{j=1}^{\ell_{s}} p_{s,j})+k=p \\
							\sum_{s=1}^n( \sum_{j=1}^{\ell_{s}} q_{s,j})+1=q	
								\end{subarray}}
			   		\prod_{j=1}^{\ell_1}{u}_{1,p_{1,j},q_{1,j}}
						\prod_{j=1}^{\ell_2}{u}_{2,p_{2,j},q_{2,j}}\cdots \cdots \\ \qquad &
						\cdots 
						\prod_{j=1}^{\ell_n}{u}_{n,p_{n,j},q_{n,j}} 
						\big) + \delta_{q,1}C{H}_{i,p}
		\end{aligned}
	\end{equation*}
	    Since $\delta_{q,1}{H}_{i,p}=0$ for $q\not =1$, $u_{i,p,0}=0$, We have $u_{i,p,1}=CH_p=M_{i,p,1}$ for 
	    $|p|\geq 2$ and $u_{i,p,1}=0$ for $|p|\leq 1$.  
		  Assume $u_{i,p,q'}=M_{i,p,q'}$ for $q'<q$. Then by \eqref{M}  $u_{i,p,q}=M_{i,p,q}$.
  	  \qed 
	\subsection{\normalsize Construction of $\Phi(x,z)$-III }
				Let us show $\sum_{|p|\geq 2, q\geq 1}	\widehat{C}_{i,p,q}(\xi)Z^{p}{\varepsilon}^q$ is convergent. 
	  	  It follows from Proposition \ref{prop-M} that there exist $A,B,C$ and $c'_{\varepsilon}>0$ such that 
	  	   	$M_{i,p,q}\leq A^{|p|}B^q$ and  
	   \begin{equation*}
	   	\begin{aligned}\ &
	   	\sum_{|p|\geq 2, q\geq 1}	|\widehat{C}_{i,p,q}(\xi)Z^{p}{\varepsilon}^q|\leq 
	   	 \sum_{|p|\geq 2, q\geq 1}\frac{M_{i,p,q}|\xi|^{q-\gamma}|Z^{p}{\varepsilon}^q|}
	   	 {\Gamma(q/\gamma)}e^{c|\xi|^{\gamma}}	\\  &\leq 
	   	  \sum_{|p|\geq 2, q\geq 1}\frac{A^{|p|}B^q|\xi|^{q-\gamma}|Z^{p}{\varepsilon}^q|}
	   	 {\Gamma(q/\gamma)}e^{c|\xi|^{\gamma}} 
	   	 \leq C(
	   	  \sum_{|p|\geq 2}{A^{|p|}|Z^{p}|})
	   	  \frac{|\xi|^{1-\gamma}e^{(c+c'_{\varepsilon})|\xi|^{\gamma}}}{\Gamma(1/\gamma)},
	   	  \end{aligned}
	   \end{equation*}	
		 		which converges in $\{Z \in {\mathbb C}^{n'}; |z|<A\}$, and
			  $\{\widehat{C}_{i,p,q}(\xi)\}\ (1\leq i \leq n)$ satisfy \eqref{Eq-c1}.
		   Take $\varepsilon=1$ and let 
	   \begin{equation}
		   	\widehat{C}_{i,p}(\xi)=\sum_{q\geq 1}\widehat{C}_{i,p,q}(\xi),		   	
	   \end{equation} 
	  	 and define  
		\begin{equation}
				C_{i,p}(x)=\int_{L(\theta)}e^{-(\frac{\xi}{x})^{\gamma}}
				\widehat{C}_{i,p}(\xi)d\xi^{\gamma} \quad \theta \in \widehat{I}_{\epsilon}.
				\end{equation}
	 Then
	  $ 	u_i(x,Z) =\sum_{|p|\geq 2}{C}_{i,p}(x)Z^{p}$. 
	     $\{u_i(x,Z)\}_{i=1}^n$ satisfy \eqref{Eq-U''}, hence $U(x,Z)=(u_1(x,Z), \cdots,u_n(x,Z))$ is
	   a solution of \eqref{Eq-U0}. 
	   Consequently $\Phi(x,Z)=U(x,Z)+\Psi(Z)=(\phi_1(x,Z),\cdots,\phi_{n}(x,Z))$ 
	   is a solution of 
	   equation \eqref{Eq-Phi} such that $\phi_i(x,Z)\in {\mathscr O}_{\{1/\gamma\}}
   		(S_{0},I_{\epsilon})\times \{|z|<r_{\epsilon} \})$
   		with $\phi_{i}(x,Z)=z_i+O(|Z|^2)$ for $1\leq i\leq n'$ and $\phi_{i}(x,z)=O(|Z|^2)$ for 
   		$i > n'$. Thus we get Theorem \ref{main-th}.
 	\subsection{\normalsize Remarks}  
			  We can take $\theta_{\mathbf \Lambda'}=\widehat{\theta}_0$  and 
			 	 $\theta_*=\theta_{\mathbf \Lambda'}/\gamma$ (see Remark \ref{rem2.3}). 
			 	 By transformation $x=t \exp(i\theta_{*})$, 
		 	 $\frac{d}{dx}=\exp({-i\theta_{*}})\frac{d}{dt}$ and 
		   $x^{1+\gamma}\frac{d}{dx}=e^{i\gamma\theta_{*}}t^{1+\gamma}\frac{d}{dt}$,
	  \begin{equation*}
	  	\begin{aligned}
	  		x^{1+\gamma}\frac{d}{dx}-\lambda_i
	  		=e^{i\theta_{\mathbf \Lambda'}}(t^{1+\gamma}\frac{d}{dt}
	  		-e^{-i\theta_{\mathbf \Lambda'}}\lambda_i).
	  	\end{aligned}
	  \end{equation*}	  
			 	 We have $\arg(e^{-i\theta_{\mathbf \Lambda'}}\lambda_i)=
			  \omega_i-\theta_{\mathbf \Lambda'}$. Hence we may assume $\theta_{\mathbf \Lambda'}=0$ and  	  
			  $$ \arg \lambda_i=\omega_i,\; |\omega_i|<\delta<\pi/2, 1\leq i \leq n'.$$    
	  		Then there exist $\delta_*>\pi/2\gamma$ and $r=r(\delta_*)>0$ such that   
	  \begin{equation}
	  	\begin{aligned}
	  		\phi_i(x,Z)\in {\mathscr O}_{\{1/\gamma\}}\big(S_{0}(I)\times \{|Z|<r \}\big), \;	 
	  		I=(-\delta_*,\delta_*).	 
	  	\end{aligned}
	  \end{equation}
				Let 
		\begin{equation}
			\begin{aligned}
				z_{i}(x)=e^{h_{i}(x)} \quad  h_{i}(x)=\int^{x} \frac{\lambda_{i}(t)}{t^{\gamma+1}}dt
				=-\frac{\lambda_i}{\gamma x^{\gamma}}+\cdots, \;\;1\leq i \leq n' 
			\end{aligned}
		\end{equation}
			and $\varepsilon>0$ be a small constant. If $|\gamma\arg x-\omega_{i}|<(\pi/2-\varepsilon)$, 
			$z_{i}(x)$ exponetially decreases with order $\gamma$. Let ${\mathcal I(\varepsilon)}
			=\cap_{i=1}^{n'}\{ \theta; |\gamma\theta-\omega_{i}|<(\pi/2-\varepsilon)\}$.
	 		Then $\{ \theta; |\gamma\theta|<(\pi/2-\delta-\varepsilon)\}\subset {\mathcal I(\varepsilon)}$.
	 		We have from Theorem \ref{main-th}
	\begin{cor}
				Let $x\in S_{0}({\mathcal I(\varepsilon)})$ and $y_i(x)=\phi_{i}(x,C_1z_1(x),
			 \cdots, C_{n'}z_{n'}(x))$, where $|C_kz_k(x)|<r$. 
			Then  $Y(x)={}^t(y_1(x),\cdots,y_n(x))$ is a solution of \eqref{Eq-Y}.
	\end{cor}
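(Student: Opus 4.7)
The plan is to verify that the hypotheses of Theorem \ref{main-th}(2) are met by the specific family $Z(x)=(C_1 z_1(x),\dots,C_{n'}z_{n'}(x))$ and then read off the conclusion. Essentially, the corollary is a concrete specialization: we must identify $\mathcal S$ with $S_0(\mathcal I(\varepsilon))$, exhibit $Z(x)$ as a solution of (Eq-Z), and confirm that $Z(x)$ stays inside the domain $\{|Z|<r\}$ on which $\Phi(x,Z)$ is defined.

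First I would check that each $z_i(x)=e^{h_i(x)}$, with $h_i(x)=\int^x\lambda_i(t)t^{-(\gamma+1)}\,dt$, satisfies
\begin{equation*}
x^{1+\gamma}\frac{dz_i}{dx}=x^{1+\gamma}h_i'(x)z_i(x)=\lambda_i(x)z_i(x),
\end{equation*}
so by linearity $C_i z_i(x)$ solves the same scalar equation for every constant $C_i$. Therefore $Z(x)=(C_1z_1(x),\dots,C_{n'}z_{n'}(x))$ is a solution of (Eq-Z) on any domain where each $z_i(x)$ is defined and single-valued; the sector $S_0(\mathcal I(\varepsilon))$ serves this purpose since it is a simply connected subdomain of the punctured neighborhood of the origin.

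Next I would control the size of $Z(x)$. In $\mathcal I(\varepsilon)=\bigcap_{i=1}^{n'}\{\theta:|\gamma\theta-\omega_i|<\pi/2-\varepsilon\}$, each component has $\operatorname{Re}(-\lambda_i/(\gamma x^\gamma))\to -\infty$ as $x\to 0$, so $|z_i(x)|\to 0$ exponentially with order $\gamma$. Consequently, after possibly shrinking the radius in $\rho(\arg x)$ defining $S_0(\mathcal I(\varepsilon))$, the hypothesis $|C_k z_k(x)|<r$ (hence $|Z(x)|<r$) is automatically enforced, and we land inside the domain of definition of $\Phi(x,Z)$ from the preceding subsection. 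At this stage Theorem \ref{main-th}(2) applies verbatim: setting $Y(x)=\Phi(x,Z(x))$ gives a solution of \eqref{Eq-Y} on $S_0(\mathcal I(\varepsilon))$, which is exactly the claim.

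There is essentially no analytic obstacle here; the lemma is a packaging statement. The only delicate point is bookkeeping: one must make sure that $\mathcal I(\varepsilon)$ is compatible with the sector $I=(-\delta_*,\delta_*)$ produced for $\Phi$ in subsection 3.4, and this is built into the construction since $\mathcal I(\varepsilon)\supset\{|\gamma\theta|<\pi/2-\delta-\varepsilon\}$ while $\delta_*>\pi/2\gamma$, so a common sector of validity exists. Everything else is a direct substitution.
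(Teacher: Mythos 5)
Your proof is correct and takes essentially the approach the paper implies (the paper leaves the corollary without an explicit proof precisely because it is a direct specialization of Theorem~\ref{main-th}(2)). You correctly identify the three things to check: that $(C_1 z_1(x),\dots,C_{n'}z_{n'}(x))$ solves (Eq-Z), that the size constraint $|Z(x)|<r_\epsilon$ holds so one stays in the domain of $\Phi$, and that $S_0(\mathcal I(\varepsilon))$ sits inside a sector $S_0(I_\epsilon)$ on which $\Phi$ is defined. One small observation: the bound $|C_k z_k(x)|<r$ is stated in the corollary as a hypothesis, so your remark about shrinking $\rho(\arg x)$ is an extra (harmless) comfort rather than something that needs to be argued; what \emph{is} gained by the exponential decay observation you make is that the hypothesis is nonvacuous for every choice of constants $C_k$ near the origin, which is the intended content of the corollary.
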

\section{\normalsize Nonlinear equation with irregular singularity II} 
		In this section we study
	\begin{equation}\left \{
		\begin{aligned}
			Y=&{}^t(y_1.y_2,\cdots,y_n) \\ 	
				x^{1+\gamma}\frac{dY}{dx}=& F_0(x)+A(x)Y+F(x,Y),
  	\end{aligned} \right . \label{Eq-01}
	\end{equation}
			where $A(x)=(a_{i,j}(x))_{1\leq i,j \leq n}$, $F_0(x)\;$ and $a_{i,j}(x)$ are holomorphic in 
			a neighborhood of $x=0$ and $ F_0(0)=0$. 
			$F(x,Y)$ is holomorphic in a neighborhood of $(x,Y)=(0,0)$ and $F(x,Y)=O(|Y|^2)$.	
			Let $\{\lambda_i\}_{1\leq i \leq n}$ be eigenvalues of $A(0)$ and 
			$\lambda_i\not =0$ for all $ i$. Let $\omega_i=\arg \lambda_i  \;(0\leq \omega_i<2\pi)$ and
	\begin{equation}
			{\mathbf	\Theta}_0=\{ (\omega_i+2\pi\ell)/\gamma,\; 1\leq i \leq n,
			 \ell \in \mathbb{Z}\}.
	\end{equation}
			There exists a unique formal solution 
			$\widetilde{K}(x)\in {\mathbb C}[[x]]^{n}$ with $\widetilde{K}(0)=0$ of \eqref{Eq-01}.		 
			Its Borel summability follows from \cite{Braak}. We have
	\begin{prop} Suppose ${\theta}_*\not \in {\mathbf	\Theta}_0$ 
		 	Then there exists a solution $K(x)$ of \eqref{Eq-01},
		 	 which is $\gamma$-Borel summable in the direction ${\theta}_*$
		 	with the asymptotic expansion $\widetilde{K}(x)$.
	\end{prop}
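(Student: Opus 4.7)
The plan is to reduce to the homogeneous framework of Sections~2--3 and then invoke the Borel--Laplace construction.

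First, I would establish the existence of the formal solution $\widetilde K(x)=\sum_{n\geq 1}k_n x^n$ directly by recursion. Substituting into \eqref{Eq-01} and comparing coefficients of $x^m$ yields an equation of the form $A(0)k_m=b_m(k_1,\dots,k_{m-1})$ for $m\leq \gamma$, and $A(0)k_m=(m-\gamma)k_{m-\gamma}+b_m(k_1,\dots,k_{m-1})$ for $m>\gamma$, where $b_m$ collects the contributions from $F_0$, from the off-diagonal part of $A(x)$ and from $F(x,\widetilde K)$ (well defined because $F(x,Y)=O(|Y|^2)$). Since every $\lambda_i\neq 0$, the matrix $A(0)$ is invertible and $k_m$ is uniquely determined at each step.

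Next, I would normalize. Since $\theta_*\notin{\mathbf \Theta}_0$, one may first arrange that $\theta_*$ is also non-singular for the linear part and then apply Proposition~\ref{prop-1} to diagonalize, bringing \eqref{Eq-01} to
\begin{equation*}
x^{1+\gamma}\tfrac{dY}{dx}=G_0(x)+\Lambda(x)Y+G(x,Y),
\end{equation*}
with $\Lambda(x)$ as in \eqref{eq-L}, $G_0(0)=0$, $G(x,Y)=O(|Y|^2)$, and $G_0$, $G$ in $\mathscr O_{\{1/\gamma\}}$ of an appropriate sector around $\theta_*$. The transformed formal solution, still denoted $\widetilde K$, is the unique one.

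Third comes the core Borel-plane construction. Taking $\gamma$-Borel transform componentwise and using \eqref{prod'}, the system becomes
\begin{equation*}
(\gamma\xi^\gamma-\lambda_i)\,\widehat k_i(\xi)=\widehat G_{0,i}(\xi)+\widehat\Lambda_i^{\,*}(\xi)\underset{\gamma}{*}\widehat k_i(\xi)+\sum_{|m|\geq 2}\widehat G_{i,m}(\xi)\underset{\gamma}{*}\underset{*\gamma}{\underbrace{\prod}}\widehat k_j(\xi)^{m_j},
\end{equation*}
where $\Lambda_i^{*}(x)=\lambda_i(x)-\lambda_i$. A variant of Lemma~\ref{lem-0} with $n'=0$ (the ``transseries'' indices $m_j$ absent) gives
\begin{equation*}
|\gamma\xi^\gamma-\lambda_i|\geq C(|\xi|^\gamma+1)\qquad \xi\in S(\widehat I_\epsilon)\cup\{|\xi|<r\},
\end{equation*}
precisely because $\theta_*\notin{\mathbf\Theta}_0$. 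Dividing by $\gamma\xi^\gamma-\lambda_i$ converts the system into a fixed-point equation for $\widehat K$, which I would solve by successive approximation in a class of functions satisfying
$|\widehat k_i(\xi)|\leq M_i|\xi|^{s_i-\gamma}e^{c|\xi|^\gamma}/\Gamma(s_i/\gamma)$, repeatedly applying Lemma~\ref{lem-prod} for the convolutions, just as in the proof of Proposition~\ref{prop-M}. Finally, applying the $\gamma$-Laplace transform \eqref{L} in direction $\theta_*$ and undoing the diagonalization yields $K(x)$, a genuine solution of \eqref{Eq-01} with $K(x)\in\mathscr O_{\{1/\gamma\}}$ whose asymptotic expansion is $\widetilde K(x)$.

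The main obstacle is the fixed-point step in the Borel plane: one must produce simultaneous estimates that close up under iteration in the presence of both the new forcing term $\widehat G_{0,i}(\xi)$ and the linear convolution $\widehat\Lambda_i^{\,*}\underset{\gamma}{*}\widehat k_i$. Unlike Section~3, here there is no indexing parameter $(p,q)$ to force smallness; instead, contraction must come from the smallness of $\widehat G_0$ near the origin (reflecting $G_0(0)=0$, so $\widehat G_{0,i}$ vanishes at $\xi=0$ in the appropriate sense) together with the quadratic vanishing of $G(x,Y)$ in $Y$, which ensures that the nonlinear convolution terms are small when $\widehat K$ is sought in a sufficiently tight ball. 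Once those bounds are secured, convergence and summability follow exactly as in Sections~3.2--3.3.
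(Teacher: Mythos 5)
The paper's own proof of this proposition is essentially a citation: after noting that a unique formal power series solution $\widetilde K(x)\in\mathbb C[[x]]^n$ with $\widetilde K(0)=0$ exists (just as you argue from the invertibility of $A(0)$), the author simply appeals to Braaksma \cite{Braak} for its Borel summability. Your proposal therefore takes a genuinely different and more self-contained route, redoing the argument in the Borel plane with the machinery of Sections~3 and~6. That is a reasonable plan, but two points need repair.

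First, routing through Proposition~\ref{prop-1} is both unnecessary and smuggles in a hypothesis you do not have. Proposition~\ref{prop-1} requires $\theta_*\notin\mathbf{\Theta}_1$, while the present statement assumes only $\theta_*\notin\mathbf{\Theta}_0$; these are the arguments of the $\lambda_i$ versus the arguments of the differences $\lambda_i-\lambda_j$ (each divided by $\gamma$), and in general $\mathbf{\Theta}_1\setminus\mathbf{\Theta}_0$ is nonempty, so $\theta_*\in\mathbf{\Theta}_1$ is allowed. You do not need the sectorial $P(x)$ at all: conjugate only by the constant matrix that diagonalizes $A(0)$ and write $a_{i,j}(x)=\delta_{i,j}\lambda_i+a^*_{i,j}(x)$ with $a^*_{i,j}(0)=0$. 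The Borel-plane system is then $(\gamma\xi^\gamma-\lambda_i)\widehat k_i=\widehat F_{0,i}+\sum_j\widehat a^*_{i,j}\underset{\gamma}{*}\widehat k_j+(\text{nonlinear terms})$, and the only singular directions arise from $\gamma\xi^\gamma=\lambda_i$, i.e.\ from $\mathbf{\Theta}_0$. Using $P(x)$ from Proposition~\ref{prop-1} would artificially bring $\mathbf{\Theta}_1$ back into the picture.

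Second, the ``main obstacle'' you flag cannot be closed by contraction in a single sup-norm ball: the linear convolution operator $\psi\mapsto\widehat a^*_{i,j}\underset{\gamma}{*}\psi$ is $O(1)$, not small, so neither the smallness of $\widehat F_0$ near $\xi=0$ nor the quadratic vanishing of $F$ makes a naive fixed-point map contractive. The right mechanism is the one already written out for the linear case in Section~6: expand $\widehat k_i=\sum_{m\geq1}\widehat k_i^{(m)}$, observe via Lemma~\ref{lem-prod} that each convolution against a factor vanishing at $\xi=0$ raises the $\xi$-exponent by at least one, and let the accumulated $\Gamma(m/\gamma)^{-1}$ factors force convergence of the series --- exactly as in Proposition~\ref{prop-M}, where the auxiliary index $q$ plays this role. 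With the diagonalization step dropped and the fixed-point step replaced by that graded Neumann-type iteration, your argument is sound and provides a constructive alternative to the citation.
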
	
			Put $Y=xW+K(x)$. Then
		\begin{equation*}
			\begin{aligned}
				x^{1+\gamma}\frac{dW}{dx}=&(A(x)-x^{\gamma}I)W+x^{-1}\big
				(F(x,xW+K(x))-F(x,K(x))\big)\\ 
				x^{1+\gamma}\frac{dw_i}{dx}=& \sum_{j=1}^n (a_{i,j}(x)-\delta_{i,j}x^{\gamma})w_j+x^{-1}
  			\big(f_i(x,xW+K(x))-f_i(x,K(x))\big)\\ 
  			=& \sum_{j=1}^n (a_{i,j}(x)-\delta_{i,j}x^{\gamma})w_j 
				+\sum_{j=1}^{n}\frac{\partial }{\partial y_{j}}f_i(x,K(x))w_j+g_i(x,W).
			\end{aligned} 
		\end{equation*}	
			Put $A'(x)=\big(a_{i,j}(x)-\delta_{i,j}x^{\gamma}
			+\frac{\partial }{\partial y_{j}}f_i(x,K(x))\big)$. Then
		\begin{equation}\left \{
 			\begin{aligned}
			W=&{}^t(w_1.w_2,\cdots,w_n) \\ 	
				x^{1+\gamma}\frac{dW}{dx}=& A'(x)W+G(x,W),
				\end{aligned} \right . \label{Eq-02}
		\end{equation}
			where $A'(0)=A(0)$ and
			$G(x,W)={}^t(g_1(x,W), \cdots,g_n(x,W))$ with $G(0,W)=0$ and $G(x,W)=O(|W|^2)$.
			 Assume $\{\lambda_i\}_{1\leq i \leq n}$  are distinct (Condition 0) and 
			${\theta}_*\not \in {\mathbf	\Theta}_0\cup {\mathbf	\Theta}_1$. 
			Then by an invertible linear transformation $W=P(x)U$ whose
			elements in ${\mathscr O}_{\{1/\gamma\}}(S_0(I))$ 
			($I=({\theta}_*-\delta_*,,{\theta}_*+\delta_*)$, $\delta_*>\pi/2\gamma $), we have 
		\begin{equation}\left \{	
			\begin{aligned}
				U=&{}^t(u_1.u_2,\cdots,u_n) \\ 	
				x^{1+\gamma}\frac{dU}{dx}=& B(x)U+H(x,U)\\ 
				B(x)=& diag.\;(b_1(x),b_2(x),\cdots,b_n(x)) \\
				H(x,U)=&{}^t(h_1(x,U),h_2(x,U),\cdots,h_n(x,U)),
			\end{aligned} \right . \label{Eq-03}
		\end{equation}
				where $b_i(x)$ is a polynomial with degree $\leq \gamma$ and $b_i(0)=\lambda_i$
				and $h_i(x,U)\in {\mathscr O}_{\{1/\gamma\}}(S_0(I)\times \Omega)$, $\Omega=\{U \in \mathbb{C}^n;
				|U|<R\}$, with $h_i(0,U)=0$ and
				$h_i(x,U)=O(|U|^2)$. Consequently we get \eqref{Eq-03} 
				from \eqref{Eq-01} by a transform $Y=K(x)+xW=K(x)+xP(x)U$. 
				We remark that $B(x)$ depends on $K(x)$. \par
				Set $\Lambda'=\{\lambda_i;1\leq i \leq n'\}$ and assume Conditions 1 and 2. 
				Take $\theta_*$ so that it satisfies the assumption of Theorem \ref{main-th}. Consider 
				an $n'\times n'$  system of linear equations
		\begin{equation}\left \{	
			\begin{aligned}
						Z=&{}^t(z_1.z_2,\cdots,z_{n'}) \\ 	
						x^{1+\gamma}\frac{dz_i}{dx}=& b_i(x)z_i, \; 1\leq i \leq n'.
			\end{aligned} \right . \label{Eq-04}
		\end{equation}
				By applying Theorem \ref{main-th}, we have
		\begin{thm} 
	 		   There exists $\Phi(x,Z)=(\phi_1(x,Z),\cdots,\phi_{n}(x,Z))$
	   		 such that for any small $\epsilon>0$ there exists $r_{\epsilon}>0$,
	   		$\phi_i(x,Z)\in {\mathscr O}_{\{1/\gamma\}}
	   		(S_{\{0\}}(I_{\epsilon})\times \{Z\in {\mathbb C}^{n'}; |Z|<r_{\epsilon} \})$
	   		and the followings hold.
	   	\begin{enumerate}
			   	 \item[{\rm (1)}] $\phi_{i}(x,Z)=z_i+O(|Z|^2)$ for $1\leq i\leq n'$ and 
	   				$\phi_{i}(x,Z)=O(|Z|^2)$ for $i > n'$. 
		   		 \item[{\rm (2)}] Let $\mathcal S$  be an open set in $S_{\{0\}}(I_{\epsilon})$ and 
   					$Z(x)=(z_1(x),\cdots,z_{n'}(x))\; (x\in \mathcal S,|Z(x)|<r_{\epsilon})$ be a solution of
   					\eqref{Eq-04}.
							Then $Y(x)=K(x)+xP(x)\Phi(x,Z(x))$ satisfies \eqref{Eq-01} in $\mathcal S$.
   		 \end{enumerate} \label{main-1}
  	\end{thm}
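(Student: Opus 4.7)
The strategy is to reduce \eqref{Eq-01} to the $F_0 \equiv 0$ situation already handled by Theorem \ref{main-th}, by performing the two substitutions $Y = K(x) + xW$ and $W = P(x)U$ set up in the paragraphs preceding the statement, and then verifying that all the hypotheses of Theorem \ref{main-th} transfer to the reduced system \eqref{Eq-03}.

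First I would fix the direction $\theta_* \notin \mathbf{\Theta}_0 \cup \mathbf{\Theta}_1$ so that it is admissible both for the Borel summable genuine solution $K(x)$ furnished by the preceding proposition and for the diagonalizing transformation $P(x)$ supplied by Proposition \ref{prop-1}. After the substitution $Y = K(x) + xW$, the forcing $F_0(x)$ is absorbed because $K(x)$ is a genuine solution, and one checks $A'(0) = A(0)$ using $K(0) = 0$ together with $F(x,Y) = O(|Y|^2)$; hence the eigenvalues $\{\lambda_i\}$ and the sets $\mathbf{\Lambda}'$, $\mathbf{\Lambda}^\sharp$, $\mathbf{\Theta}_1$ are unchanged, and Conditions 0, 1, 2 continue to hold. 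The new nonlinearity $G(x,W)$, obtained by Taylor-expanding $F$ around $K(x)$, inherits Borel summability with respect to $x$ because $K(x) \in \mathscr{O}_{\{1/\gamma\}}(S_0(I))$ and because the class $\mathscr{O}_{\{1/\gamma\}}$ is closed under substitution of a Borel summable function into a function holomorphic in $Y$.

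Next I would apply $W = P(x)U$ from Proposition \ref{prop-1} and arrive at the fully diagonal system \eqref{Eq-03}. The new linear part $B(x)$ is the polynomial diagonal matrix with $b_i(0) = \lambda_i$, and the new nonlinearity $H(x,U)$ still lies in $\mathscr{O}_{\{1/\gamma\}}(S_0(I) \times \Omega)$ with $H(0,U) = 0$ and $H(x,U) = O(|U|^2)$, by the same closure property. At this point \eqref{Eq-03} is precisely in the form \eqref{Eq-Y} and every hypothesis of Theorem \ref{main-th} is in place, so Theorem \ref{main-th} produces a transformation $\Phi(x,z)$ with properties (1) and (2). Unwinding the substitutions, if $Z(x)$ solves \eqref{Eq-04}, then $U(x) = \Phi(x, Z(x))$ solves \eqref{Eq-03}, hence $W(x) = P(x)U(x)$ solves \eqref{Eq-02}, and finally $Y(x) = K(x) + xP(x)\Phi(x, Z(x))$ solves \eqref{Eq-01}. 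The asymptotic shape in (1) is preserved because $P(0) = Id$ and the leading $z_i$-term survives the linear composition by $xP(x)$ up to the overall factor $x$ already absorbed in the definition of $H$.

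The main technical obstacle, already implicit in the reduction steps, is justifying that substituting the Borel summable function $K(x)$ into $F(x,Y)$, and likewise composing with $P(x) \in \mathscr{O}_{\{1/\gamma\}}$, keeps one inside $\mathscr{O}_{\{1/\gamma\}}(S_0(I)\times \Omega)$. This is a stability property of the Gevrey-type class used throughout the paper and can be verified directly from the Borel transform bounds via convolution estimates of the type exploited in Lemma \ref{lem-prod} and Proposition \ref{prop-Borel}, once $F$ is expanded as a convergent power series in $Y$ and one controls each coefficient in $\xi$. Granted this stability, the remainder of the argument is bookkeeping plus a direct appeal to the already-proven Theorem \ref{main-th}.
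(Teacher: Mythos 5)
Your proposal matches the paper's own (very terse) proof: fix $\theta_* \notin \mathbf{\Theta}_0 \cup \mathbf{\Theta}_1$, absorb $F_0$ by $Y = K(x) + xW$ using the Borel summable particular solution $K$, diagonalize via $W = P(x)U$, verify the reduced system \eqref{Eq-03} meets the hypotheses of Theorem \ref{main-th}, and unwind. One small inaccuracy worth noting: part (1) of the statement concerns $\Phi$ itself, which is exactly the map produced by Theorem \ref{main-th} for the reduced system, so it holds verbatim by that theorem; no appeal to $P(0)=Id$ is needed there (that observation only enters when describing the leading behavior of the composed solution $Y(x)$).
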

 	 \section{\normalsize Applications }
 	 		\subsection{\normalsize A special $2\times 2$ system}  
 	 		Let us consider a special $2\times 2$ system as an example, 
 	 			\begin{equation}
 	 				\begin{aligned}
					x^{1+\gamma}\frac{dY}{dx}= F_0(x)+A(x)Y+F(x,Y) \quad Y= 
						\begin{bmatrix}	
							y_1\\
							y_2				
						\end{bmatrix}.  
					\end{aligned} \label{2,2system}
				\end{equation}
			The assumptions for \eqref{2,2system} are the same as that for \eqref{Eq-01}.  
 	 		Let $\lambda_1, \lambda_2$ be eigenvalues of $A(0)$. Further we assume 
 	 		\begin{itembox}[l]{\bf Condition 5-1} 
  				 $\lambda_1\lambda_2 \not =0$ and	$\arg\lambda_1=\omega$, $\arg\lambda_2=\omega+\pi$. 
			\end{itembox}
  	 		Then 
 	 		$\Theta_0\cup\Theta_1=\{(\omega+\ell\pi)/\gamma; \ell \in \mathbb{Z} \}$. If $\Lambda'=\{\lambda_1\}$
 	 		 ($\Lambda'=\{\lambda_2\}$), then $\frak{L}\subset L(\omega)$ (resp. $\frak{L}\subset L(\omega+\pi))$
 	 		(see $\eqref{fL}$).
 	 		Let $\theta_* \not \in \Theta_0\cup\Theta_1$ and $K(x)={}^t(k_1(x),k_2(x))$ be a solution of 
 	 		\eqref{2,2system} with $\gamma$-Borel summable in the direction $\theta_*$.  
 	 		By eliminating 
 	 		$F_0(x)$ and a linear transformation of the unknowns $Y=K(x)+xP(x)U$, 
 	 		$U={}^t(u_1,u_2)$, we can reduce \eqref{2,2system} to             
 	 		\begin{equation}
 			\begin{aligned}
 			   	x^{1+\gamma}\frac{d}{dx}
 			   	\begin{bmatrix}
 			   	  u_1 \\
 			   	  u_2 
 			   	\end{bmatrix}=
 			   	\begin{bmatrix}
 			   	 \lambda_1(x) & 0 \\
 			   	  0 & \lambda_2(x) 
 			   	\end{bmatrix}
 			   		\begin{bmatrix}
 			   	  u_1 \\
 			   	  u_2 
 			   	\end{bmatrix}+ G(x,U), \; G(x,U)=
 			   	\begin{bmatrix}
 			   	  g_1(x,U) \\
 			   	  g_2(x,U)  
 			   	\end{bmatrix} 
 			   	\end{aligned}.  \label{Eq-U}
		\end{equation}
		$\lambda_i(x)\; (i=1,2)$ is a polynomial with degree $\leq \gamma$ and
		$\lambda_i(0)=\lambda_i$.  
		$G(x,U)$ is $\gamma$-Borel summable in the direction $\theta_*$ with respect to $x$ 
		such that $G(x,U)=O(|U|^2)$ and $G(0,U)=0$. \par
		 Let $A_i(x)=\int^{x}\frac{\lambda_i(\tau)}{\tau^{\gamma+1}}d\tau$.  
		  Then $Z={}^t(C_1e^{A_1(x)} ,C_2e^{A_2(x)} )$ is a solution of
		 \begin{equation}
 			\begin{aligned}
 			   	x^{1+\gamma}\frac{d}{dx}
 			   	\begin{bmatrix}
 			   	  z_1 \\
 			   	  z_2 
 			   	\end{bmatrix}=
 			   	\begin{bmatrix}
 			   	 \lambda_1(x) & 0 \\
 			   	  0 & \lambda_2(x) 
 			   	\end{bmatrix}
 			   		\begin{bmatrix}
 			   	  z_1 \\
 			   	  z_2 
 			   	\end{bmatrix}.
 			   \end{aligned}
 		\end{equation}
 				Take $\theta_*\in (\frac{\omega}{\gamma},\frac{\omega+\pi}{\gamma})$ and let  
			$I=(\frac{\omega}{\gamma}-\frac{\pi}{2\gamma},\frac{\omega}{\gamma}+\frac{3\pi}{2\gamma})$.	 
			Then the assumptions of Theorem \ref{main-1} hold for both 
			$\Lambda'=\{\lambda_1\}$ and $\Lambda'=\{\lambda_2\}$. Hence 	   	
			 there exist $\Psi_1(x,z_1)={}^t(\psi_{1,1}(x,z_1),\psi_{1,2}(x,z_1))$
				and  $\Psi_2(x,z_2)={}^t(\psi_{2,1}(x,z_2),\psi_{2,2}(x,z_2))$ with  
			$\psi_{i,j}(x,z_i)=\sum_{n=1}^{\infty}\psi_{i,j}^n(x)z_i^n  \in \mathscr{O}_{\{1/\gamma\}}
			( S_{\{0\}}(I_{\epsilon})\times \{|z_i|<r_{\epsilon}\})$ for any small $\epsilon>0$ and 
			$|\partial_{z_i}\psi_{i,1}(x,0)|+|\partial_{z_i}\psi_{i,2}(x,0)|\not \equiv 0$ such that they 
		  have the following properties: 
	 \begin{thm} \label{2*2}
	     Let $\epsilon>0$ be an arbitrary small constant. \\
			{\rm (1)} Let ${\mathcal I}^{\frac{\omega}{\gamma}}=(\frac{\omega}{\gamma}-\frac{\pi}{2\gamma}, 
				\frac{\omega}{\gamma}+\frac{\pi}{2\gamma})$.  
			 Then there exists $C({\epsilon})>0$ such that $Y(x)={}^t(y_1(x),y_2(x))$,
		\begin{equation} 
				\begin{aligned}
					y_j(x)=k_j(x)+\sum_{n=1}^{\infty}\psi_{1,j}^n(x)C_1^ne^{nA_1(x)}\quad  j=1,2,
				\end{aligned}
		\end{equation}
				with $|C_1|<C({\epsilon})$, is a solution of \eqref{2,2system} in 
				$S_{\{0\}}({\mathcal I}^{\frac{\omega}{\gamma}}_{\epsilon})$. 
		\\
		{\rm (2)}	Let ${\mathcal I}^{\frac{\omega+\pi}{\gamma}}=
		(\frac{\omega}{\gamma}+\frac{\pi}{2\gamma}, \frac{\omega}{\gamma}+\frac{3\pi}{2\gamma})$.
		Then there exists $C({\epsilon})>0$ such that $Y(x)={}^t(y_1(x),y_2(x))$,  
		\begin{equation}
			\begin{aligned}
			y_j(x)=k_j(x)+\sum_{n=1}^{\infty}\psi_{2,j}^n(x)C_2^ne^{nA_2(x)}\quad  j=1,2,
			\end{aligned}
		\end{equation}
		with $|C_2|<C({\epsilon})$, is a solution of \eqref{2,2system} in 
		$S_{\{0\}}({\mathcal I}^{\frac{\omega+\pi}{\gamma}}_{\epsilon})$.
		 \end{thm}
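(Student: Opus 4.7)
The plan is to reduce Theorem \ref{2*2} to two applications of Theorem \ref{main-1}, one with $\mathbf{\Lambda}' = \{\lambda_1\}$ yielding Part (1) and one with $\mathbf{\Lambda}' = \{\lambda_2\}$ yielding Part (2). Since \eqref{Eq-S} is a special case of \eqref{Eq-01} with $n = 2$, the first step is to verify Conditions 0, 1, 2 for each choice. Condition 0 follows from Condition 5-1: $\arg\lambda_1 = \omega$ and $\arg\lambda_2 = \omega + \pi$ force $\lambda_1 \neq \lambda_2$. Condition 1 is trivial for a one-element $\mathbf{\Lambda}' = \{\lambda_i\}$: take $\theta_{\mathbf{\Lambda}'} = \omega$ (resp.\ $\omega + \pi$) and any small $\delta_{\mathbf{\Lambda}'} > 0$.

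The non-resonance Condition 2 is where the hypothesis $\arg\lambda_2 = \omega + \pi$ is essential. For Part (1) it reads $m_1\lambda_1 - \lambda_i \neq 0$ for all $m_1 \geq 2$ and $i = 1, 2$: when $i = 1$ this is $(m_1 - 1)\lambda_1 \neq 0$; when $i = 2$, the number $m_1\lambda_1$ has argument $\omega$ while $\lambda_2$ has argument $\omega + \pi$, so the two lie on opposite rays through the origin and their difference cannot vanish. Part (2) is symmetric. For the direction, Remark \ref{rem2.4} lets me take $\widehat{\theta}_0 = \theta_{\mathbf{\Lambda}'}$, so that $\theta_* = \omega/\gamma$ in Part (1) and $\theta_* = (\omega + \pi)/\gamma$ in Part (2), each perturbed infinitesimally into the open interval $(\omega/\gamma, (\omega + \pi)/\gamma)$ to avoid the finite exceptional set $\Theta_0 \cup \Theta_1 = \{(\omega + \ell\pi)/\gamma\}_{\ell \in \mathbb{Z}}$. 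Theorem \ref{main-1} then delivers $\Phi(x, z_i) = (\phi_1(x, z_i), \phi_2(x, z_i))$ with $\phi_j \in \mathscr{O}_{\{1/\gamma\}}(S_0(I_\epsilon) \times \{|z_i| < r_\epsilon\})$; as $\epsilon_*$ and the perturbation of $\theta_*$ shrink, the resulting $x$-sector $I_\epsilon$ of opening $\pi/\gamma + 2\epsilon_* - 2\epsilon$ collapses exactly onto $\mathcal{I}^{\omega/\gamma}_\epsilon$ in Part (1) and onto $\mathcal{I}^{(\omega+\pi)/\gamma}_\epsilon$ in Part (2).

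Once these transformations are in hand, the explicit expansion in the statement follows by substitution. The reduced scalar equation $x^{1+\gamma}\,dz_1/dx = \lambda_1(x) z_1$ admits $z_1(x) = C_1 e^{A_1(x)}$, so writing $Y(x) = K(x) + xP(x)\Phi(x, z_1(x))$ and expanding $\phi_k(x, z) = \sum_{n \geq 1} \phi_k^{(n)}(x) z^n$ yields $y_j(x) = k_j(x) + \sum_{n \geq 1} \psi_{1,j}^n(x) C_1^n e^{n A_1(x)}$, where $\psi_{1,j}^n(x) := x \sum_k P_{jk}(x)\, \phi_k^{(n)}(x)$ absorbs the linear change of variables and inherits $\gamma$-Borel summability from its factors. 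The bound $|C_1| < C(\epsilon)$ comes from requiring $|z_1(x)| < r_\epsilon$ throughout $S_0(\mathcal{I}^{\omega/\gamma}_\epsilon)$, which is feasible because $\mathrm{Re}(-\lambda_1/(\gamma x^\gamma)) \leq 0$ on that sector, hence $|e^{A_1(x)}|$ is uniformly bounded there and it suffices to take $C(\epsilon)$ slightly smaller than $r_\epsilon$ divided by this uniform bound. Part (2) is verbatim with indices $1$ and $2$ interchanged.

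The only delicate point, in my view, is the matching of sectors: one must confirm that the perturbed $\theta_*$ values lie in $(\omega/\gamma, (\omega+\pi)/\gamma)$ and produce $x$-sectors that in the limit coincide with the target intervals $\mathcal{I}^{\omega/\gamma}_\epsilon$ and $\mathcal{I}^{(\omega+\pi)/\gamma}_\epsilon$. This reduces to Remark \ref{rem2.4} together with the observation that the full opening $\pi/\gamma + 2\epsilon_*$ supplied by Theorem \ref{main-th} centered at $\theta_*$ tends to exactly $\pi/\gamma$, which is the opening of each target interval, so no room is wasted.
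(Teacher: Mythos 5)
Your proposal is correct and follows essentially the same route as the paper: reduce to two applications of Theorem~\ref{main-1} with $\mathbf{\Lambda}'=\{\lambda_1\}$ and $\mathbf{\Lambda}'=\{\lambda_2\}$, use Condition~5-1 to see that Conditions 0--2 hold automatically (your observation that $m_1\lambda_1-\lambda_2$ cannot vanish because $m_1\lambda_1$ and $\lambda_2$ lie on opposite rays is exactly the point), and then note that $e^{A_1(x)}$ (resp.\ $e^{A_2(x)}$) decays on $S_0(\mathcal{I}^{\omega/\gamma}_\epsilon)$ (resp.\ $S_0(\mathcal{I}^{(\omega+\pi)/\gamma}_\epsilon)$), so a sufficiently small $|C_i|$ keeps $z_i(x)$ inside the disk $\{|z_i|<r_\epsilon\}$ and Theorem~\ref{main-1}(2) applies. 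The one minor deviation is your choice of $\theta_*$: the paper fixes a single $\theta_*$ in the interior of $(\omega/\gamma,(\omega+\pi)/\gamma)$, which lets $\epsilon_*$ approach $\pi/(2\gamma)$ and yields both $\Psi_1$ and $\Psi_2$ at once on the wide sector $S_0(I_\epsilon)$ with $I=(\omega/\gamma-\pi/(2\gamma),\,\omega/\gamma+3\pi/(2\gamma))$, whereas you push $\theta_*$ toward an endpoint separately for each part and obtain narrower transformation sectors that just cover the target $\mathcal{I}^{\cdot}_\epsilon$; both choices are valid, the paper's being marginally cleaner.
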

		 \begin{proof}
		 	Since $e^{A_1(x)}\; (e^{A_2x}) $ decays exponentially in 
		$S_{\{0\}}({\mathcal I}^{\omega/\gamma}_{\epsilon})$ 
		 $(resp. S_{\{0\}}({\mathcal I}^{(\omega+\pi)/\gamma}_{\epsilon})$). 
		$Y(x)={}^t(y_1(x),y_2(x))$ is a solution of \eqref{Eq-01} in $S_{\{0\}}({\mathcal I}^{\omega/\gamma}_{\epsilon})$ 
		 $(resp. S_{\{0\}}({\mathcal I}^{(\omega+\pi)/\gamma}_{\epsilon})$).\\
		 \end{proof}
	\subsection{\normalsize Painlev\'{e} 2 and Painlev\'{e} 4}  
			We try to apply Theorem \ref{2*2} to Painlev\'{e} 2 and 4 equations as examples. 
			Other Painlev\'{e} equations at irregular singular points are also studied
			in the same way.  
			Painlev\'{e} 5, where $\gamma=1$, is studied in \cite{Cos-3} in a different method. 
			The present cases have $\gamma>1$. First let us study Painlev\'{e} 2, 	 
		\begin{equation}
			\begin{aligned}
				(P)_2:\quad y''=2y^3+ty+a.
			\end{aligned} 
		\end{equation}
			By $t=1/s$
		\begin{equation}
			\begin{aligned}
				\quad s(s^2\frac{d}{ds})^2y=2sy^3+y+as.
			\end{aligned} \label{P2}
		\end{equation}
			($P_2.1$)	 There exists a unique formal power series solution 
			$\widetilde{k}(s)=-as+2(a^3-a)s^4+\cdots$ to \eqref{P2}. Let $y=-as+z$. Then we have
		\begin{equation*}
			\begin{aligned}\ &
			 s(s^2\frac{d}{ds})^2z=(1+6a^2s^3)z+2(a-a^3)s^4-6as^2z^2+2sz^3,
			\end{aligned}
		\end{equation*}	and
		\begin{equation*}
			\begin{aligned}& 
				(s^{5/2}\frac{d}{ds})^2z-\frac{1}{2}s^4\frac{d}{ds}z 
				=(1+6a^2s^3)z+a_0(s)+a_2(s)z^2+a_3(s)z^3.
			\end{aligned}
		\end{equation*}
		   Set $u=z$, $v=s^{5/2}\frac{d}{ds}z$. Then
		\begin{equation}
    	\begin{aligned}\
    	s^{5/2} \frac{d}{ds}
    	 \begin{bmatrix}
  	   u \\
  	   v 
  		\end{bmatrix} = &
 		\left (
  		 	 \begin{bmatrix}
  	  0 & 1 \\
  	  1 & 0 
  		\end{bmatrix} 				 
				+s^{3/2}
  				 	 \begin{bmatrix}
  	    0  & 0  \\ 
	 	    0  & \frac{1}{2}    
  		\end{bmatrix} 
  		+O(s^3)		 
 			\right )
      \begin{bmatrix}
  	   u \\
  	   v 
  		\end{bmatrix} \\ &+
  	 	 \begin{bmatrix}
  		0 \\
  	a_0(s)+a_2(s)u^2+a_3(s)u^3 
  		\end{bmatrix} 	\end{aligned}.		
  \end{equation}
		By changing $x=s^{1/2}$
		\begin{equation}
    	\begin{aligned}\
    	x^{4} \frac{d}{dx}
    	 \begin{bmatrix}
  	   u \\
  	   v 
  		\end{bmatrix} = &
 		\left (
  		 	 \begin{bmatrix}
  	  0 & 2 \\
  	  2 & 0 
  		\end{bmatrix} 				 
				+x^{3}
  				 	 \begin{bmatrix}
  	    0  & 0  \\ 
  	    0  & 1  
  		\end{bmatrix} 
  		+O(x^6)		 
 			\right )
      \begin{bmatrix}
  	   u \\
  	   v 
  		\end{bmatrix} \\ &+2
  	 	 \begin{bmatrix}
  		0 \\
  	a_0(x^2)+a_2(x^2)u^2+a_3(x^2)u^3 
  		\end{bmatrix} 	\end{aligned}.\label{P2-1}.		
  \end{equation}
			Then $\gamma=3$,$\lambda_1=2, \lambda_2=-2$ and ${\mathbf \Theta_0}\cup {\mathbf \Theta_1}=
			\{\frac{\pi\ell}{3}; \ell \in \mathbb{Z} \}$. Hence  equation \eqref{P2-1} 
			satisfies Condition 5.1 and we can apply Theorem \ref{2*2} to it. We also 
			conclude that  $\tilde{k}(s)\in {\mathbb C}[[s]]$ is $3/2$-Borel summable. \\
			($P_2.2$) Let 
			$2c^2+1=0$ and $y= s^{-1/2}(c+z)$. Then 
		\begin{equation*}
			\begin{aligned}\ &
		    (s^{5/2}\frac{d}{ds})^2 z-\frac{3}{2}{s^{4}}\frac{d}{ds}z
     =(6c^2+1+\frac{s^{3}}{4})z+6cz^2+2z^3+as^{3/2}+\frac{cs^{3}}{4}.
			\end{aligned}
		\end{equation*}
				   Set $u=z$, $v=s^{5/2}\frac{d}{ds}z$. Then
		\begin{equation}
    	\begin{aligned}\
    	s^{5/2} \frac{d}{ds}
    	 \begin{bmatrix}
  	   u \\
  	   v 
  		\end{bmatrix} = &
 		\left (
  		 	 \begin{bmatrix}
  	  0 & 1 \\
  	 -2 & 0 
  		\end{bmatrix} 				 
				+s^{3/2}
  				 	 \begin{bmatrix}
  	    0  & 0  \\ 
  	    0  & \frac{3}{2}    
  		\end{bmatrix} 
  		+O(s^3)		 
 			\right )
      \begin{bmatrix}
  	   u \\
  	   v 
  		\end{bmatrix} \\ &+
  	 	 \begin{bmatrix}
  		0 \\
  	a_0(s)+6cu^2+2u^3 
  		\end{bmatrix} 	\end{aligned}.		
  \end{equation}
   By changing $x=s^{1/2}$, we can show that Theorem \ref{2*2} is available to this case 
   in the same way as the case (1).
   \par 
   Next let us study Painlev\'{e}-4. 
		\begin{equation}
  	\begin{aligned}  (P)_4: \quad
  	y''=& 	\frac{(y')^2}{2y}+\frac{3}{2}y^3+4ty^2+2({t^2}-\alpha)y+\frac{\beta}{y}.
  		  \end{aligned}
  \end{equation}
  Let $t=1/x$. Then
   \begin{equation}
	   	\begin{aligned}
	  		x^2(x^2\frac{d}{dx})^2y=\frac{1}{2y}(x^3\frac{dy}{dx})^2+\frac{3}{2}x^2y^3+4x y^2
  			+2(1-\alpha x^2)y+\frac{\beta x^2}{y}..
    	\end{aligned} \label{5.12}
  \end{equation}
 	 	$(P_4.1)$\; Futher by multiplying \eqref{5.12} by $x$, we have    
     \begin{equation*}
	   	\begin{aligned}
	  		x^3(x^2\frac{d}{dx})^2y=\frac{x}{2y}(x^3\frac{dy}{dx})^2+\frac{3}{2}x^3y^3+4x^2y^2
  			+2(1-\alpha x^2)xy+\frac{\beta }{y} x^3.
    	\end{aligned}
  \end{equation*}
 		Let $3c^2+8c+4=0$\; $(c=-\frac{2}{3},-2)$.
    By transformation $y=\frac{1}{x}(c+z)$, we obtain  
    \begin{equation*}
  	\begin{aligned}\ &
  	x^3(x^2\frac{d}{dx})^2y=(x^3\frac{d}{dx})^2z-3x^5\frac{d}{dx}z
  	 \\ =&\frac{x^2}{2c}(1+\frac{z}{c})^{-1}(x^3\frac{d}{dx}\frac{c+z}{x})^2
  	+\frac{3}{2}(c+z)^3 +4(c+z)^2\\
  	&+2(c+z)	-2\alpha x^2(c+z) 
  	 +\frac{\beta x^4 }{c}(1+\frac{z}{c})^{-1},
  	\end{aligned}
  \end{equation*}
  By $(c+z)^{-1}=c^{-1} (\sum_{n=0}^{\infty}(-\frac{z}{c})^n)$ we get
  \begin{equation}
  (x^3\frac{d}{dx})^2z=2x^5z'+a(x)z+b(x,z,x^3\frac{d}{dx}z)+h(x),
  \end{equation}
  where 
    \begin{equation*}
  	\begin{aligned}
  	a(x)=&  \frac{9}{2}c^2+8c+2-2\alpha x^2 +O(x^4),\; 
   b(x,z,p)=\sum_{i+j\geq 2}b_{i,j}(x)z^i p^j \\
   h(x)=&\frac{3}{2}c^3+4c^2 +2c-2\alpha c x^2+O(x^4)=-2\alpha c x^2+O(x^4).
  	\end{aligned}
  \end{equation*}
   It follows from $3c^2+8c +4=0$ that  $h(x)=O(x^2)$ and 
   	$a(x)=-4(c+1)+O(x^2)$ \; ($-4(c+1)= -4/3$ or $4$). 
		Let $u=z, v=x^3z'$. Then we have 
\begin{equation}
    	\begin{aligned}\
    	x^3 \frac{d}{dx}
    	 \begin{bmatrix}
  	   u \\
  	   v 
  		\end{bmatrix} = &
 		\left (
  		 	 \begin{bmatrix}
  	  0 & 1 \\
  	 -4(c+1) & 0 
  		\end{bmatrix} 				 
				+{x^2}
  				 	 \begin{bmatrix}
  	  0  & 0    \\ 
  	  -2\alpha   & 2  	      
  		\end{bmatrix} 
 			+O(x^4)\right )
      \begin{bmatrix}
  	   u \\
  	   v 
  		\end{bmatrix} \\ &+
  	 	 \begin{bmatrix}
  		0 \\
  	b(x,u,v)+h(x) 
  		\end{bmatrix}. 	\end{aligned}.		
  \end{equation}
  and $\gamma=2$. Hence we can apply Theorem \ref{2*2} to this case. \\
		$(P_4.2)$\; 
		 Let $2c^2+\beta=0 \; (\beta \not=0)$ and $y=x(c+z)$. Then from \eqref{5.12}
  \begin{equation*}\left \{
  	\begin{aligned}\  &
  	x^2(x^2\frac{d}{dx})^2x(c+z)=x(x^3\frac{d}{dx})^2z+x^6\frac{dz}{dx}+2x^5z+2cx^5
  	\\ &		\frac{3}{2}x^2y^3+4xy^2	-2\alpha x^2y
			=	x^3(b_0+b_1z+b_2(x)z^2+b_3(x)z^3)  
			\\ & b_1=\frac{9c^2}{2}x^2+8c-2\alpha 		\\ &
			2y+\frac{\beta x^2}{y} =4xz+ \frac{\beta x}{c}\sum_{n=2}^{\infty}(-z/c)^{n}\; 
			(\because \beta+2c^2=0)			
  			  	\end{aligned}\right.
  \end{equation*}
  and 
  \begin{equation*}
  	\begin{aligned}\ &
  	 	\frac{1}{2y}(x^3\frac{dy}{dx})^2= x^5(\frac{d(xz)}{dx}+c)^2
  (\frac{1}{2c}\sum_{n=0}^{\infty}(-z/c)^{n}) \\ 
  = & \frac{cx^5}{2}+(O(x^5))z+
  x^6\frac{dz}{dx}+ x \sum_{i+j\geq 2}b_{i,j}(x)z^i(x^3\frac{dz}{dx})^j 
  	\end{aligned}
  \end{equation*}
  Thus we get
  \begin{equation}
  	\begin{aligned}\  &
  		(x^3\frac{d}{dx})^2z   
  =	 x^2h(x)+(4+(8c-2\alpha)x^2+O(x^4))z 
    + \big(\sum_{i+j\geq 2}b_{i,j}(x)z^i(x^3\frac{dz}{dx})^j \big ).
  			  	\end{aligned}
  \end{equation}
  		Let $u=z, v=x^3z'$. Then we have 
\begin{equation}
    	\begin{aligned}\
    	x^3 \frac{d}{dx}
    	 \begin{bmatrix}
  	   u \\
  	   v 
  		\end{bmatrix} = &
 		\left (
  		 	 \begin{bmatrix}
  	  0 & 1 \\
  	  4 & 0 
  		\end{bmatrix} 				 
				+{x^2}
  				 	 \begin{bmatrix}
  	  0  & 0    \\ 
  	  8c-2\alpha   & 0    
  		\end{bmatrix} 			 
 			+O(x^4)\right )
      \begin{bmatrix}
  	   u \\
  	   v 
  		\end{bmatrix} \\ &+
  	 	 \begin{bmatrix}
  		0 \\
  	c(x,u,v)+x^2h(x) 
  		\end{bmatrix}. 	\end{aligned}		
  \end{equation}
  Therefore we can apply Theorem \ref{2*2} to this case.
 \section {\normalsize 
				Transformation of systems of linear ordinary equations with irregular singularity}
				Let us return to an $n\times n$ system of linear ordinary equations 
		\begin{equation}
			x^{1+\gamma}\dfrac{dY}{dx}=A(x)Y. \; A(x)=(a_{i,j}(x))\label{Eq-0'}
		\end{equation}
				and prove Proposition \ref{prop-1}. Let $\lambda_{1},\lambda_2,\cdots, \lambda_n$ be the eigenvalues 
				of $A(0)$, $\lambda_{i}\not=\lambda_j \;(i\not= j)$ and $A(0)$ be diagonal.
				Let us recall the definitions $\omega_{i,k}$, $\theta_{i,k,\ell}$ 
				and $\mathbf \Theta_1$ (see \eqref{not-2}).
				 Let $\theta_* \not \in {\mathbf \Theta}_1$. Then 
				there exist $\widehat{I}=(\theta_*-\epsilon_* , \theta_* +\epsilon_*)$ $(\epsilon_*>0)$ 
				such that $\widehat{I} \cap {\mathbf \Theta_1} =\emptyset$. We assume 
				$\{a_{i,j}(x)\}_{1\leq i,j \leq n}$ are $\gamma$-Borel summable in the direction $\theta_*$. 
				Hence there exists $I=(\theta_*-\delta_*, \theta_* +\delta_*)$ $(\delta_*>\pi/2\gamma)$  
				such that 	$a_{i,j}(x) \in {\mathscr O}_{\{1/\gamma\}}(S_{0}(I))$. 
				Firstly we have, by a transformation with polynomial elements,
	\begin{lem}
				There is a matrix $P(x)$ $(P(0)=Id)$ with polynomial elements such that  
				linear transformation $Y=P(x)Z$ transforms \eqref{Eq-0'} to  
		\begin{equation}
		 		 x^{1+\gamma}\frac{dz_i}{dx}={\lambda}_i(x)z_i+\sum_{k=1}^{n}b_{i,k}(x)z_{k} \quad
			 (1\leq i \leq n),  \label{eq-0'}
		\end{equation}
				where  ${\lambda}_i(x)$ is a polynomial with degree 
				$\leq \gamma$, $\lambda_i(0)=\lambda_i$ and
				$b_{i,k}(x)(1\leq i,k\leq n)  \in {\mathscr O}_{\{1/\gamma\}}(S_{0}(I))$ 	with 
				$b_{i,k}(x)=O(x^{1+\gamma})$.
	\end{lem}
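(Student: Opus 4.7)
My plan is to seek $P(x)=I+\sum_{k=1}^{\gamma} x^k P_k$ (a polynomial of degree $\leq\gamma$ with $P(0)=I$) and a diagonal polynomial $\Lambda(x)=\sum_{k=0}^{\gamma}x^k\Lambda_k$ with $\Lambda_0=A(0)=\mathrm{diag}(\lambda_1,\ldots,\lambda_n)$ satisfying
\[
A(x)P(x)-P(x)\Lambda(x)=O(x^{1+\gamma})
\]
in the $\mathscr{O}_{\{1/\gamma\}}$ sense on $S_0(I)$. Writing the asymptotic expansion $A(x)\sim\sum_{k\geq 0}A_k x^k$ and matching coefficients of $x^k$ for $0\leq k\leq\gamma$ reduces this to the cohomological equations
\[
[A_0,P_k]=\Phi_k-\Lambda_k,\qquad \Phi_k:=A_k+\sum_{j=1}^{k-1}\bigl(A_{k-j}P_j-P_j\Lambda_{k-j}\bigr),
\]
where $[A_0,P_k]:=A_0P_k-P_k A_0$ and the right-hand side depends only on data of strictly lower order.

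Because Condition 0 gives distinct $\lambda_i$, the operator $M\mapsto[A_0,M]$ has kernel equal to the diagonal matrices and is a bijection on the off-diagonal matrices (explicitly $[A_0,M]_{ij}=(\lambda_i-\lambda_j)M_{ij}$). Hence at each step I would set $\Lambda_k:=\mathrm{diag}\,\Phi_k$ to cancel the diagonal part and then define $(P_k)_{ij}:=(\Phi_k)_{ij}/(\lambda_i-\lambda_j)$ for $i\neq j$ while taking $(P_k)_{ii}=0$. This determines $P_1,\ldots,P_\gamma$ and $\Lambda_1,\ldots,\Lambda_\gamma$ uniquely, yielding polynomials $P(x)$ and $\Lambda(x)$ of degree $\leq\gamma$ with $P(0)=I$ and diagonal entries $\lambda_i(x)$ satisfying $\lambda_i(0)=\lambda_i$.

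Next I would define $R(x):=x^{-(1+\gamma)}\bigl(A(x)P(x)-P(x)\Lambda(x)\bigr)$. Because $A(x)$ is $\gamma$-Borel summable and $P,\Lambda$ are polynomial, the bracket lies in $\mathscr{O}_{\{1/\gamma\}}(S_0(I))$; by construction its asymptotic expansion vanishes to order $x^{\gamma+1}$, so the standard shift property for Gevrey-$1/\gamma$ asymptotics gives $R(x)\in\mathscr{O}_{\{1/\gamma\}}(S_0(I))$. Now the substitution $Y=P(x)Z$ yields
\[
x^{1+\gamma}\frac{dZ}{dx}=P(x)^{-1}\bigl(A(x)P(x)-x^{1+\gamma}P'(x)\bigr)Z=\Lambda(x)Z+x^{1+\gamma}P(x)^{-1}\bigl(R(x)-P'(x)\bigr)Z,
\]
and since $P(0)=I$ makes $P(x)^{-1}$ holomorphic at $0$, the entries of the correction matrix $B(x):=x^{1+\gamma}P(x)^{-1}(R(x)-P'(x))$ lie in $\mathscr{O}_{\{1/\gamma\}}(S_0(I))$ (after shrinking $\rho$ so that $P$ is invertible on $S_0(I)$) and carry the factor $x^{1+\gamma}$, giving the required $b_{i,k}(x)=O(x^{1+\gamma})$.

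The cohomological step is immediate once Condition 0 is assumed, so the only point requiring care is checking that division by $x^{1+\gamma}$ preserves membership in $\mathscr{O}_{\{1/\gamma\}}$ once the first $\gamma+1$ coefficients of the asymptotic expansion vanish. Concretely, one rewrites the $N$-th remainder of $R$ as $x^{-(1+\gamma)}$ times the $(N+\gamma+1)$-th remainder of the numerator and verifies that the resulting bound retains the $\Gamma(N/\gamma+1)$ growth after absorbing the bounded ratio $\Gamma((N+\gamma+1)/\gamma+1)/\Gamma(N/\gamma+1)\leq K\,C^{N}$ into the Borel constants. This is the only genuine estimation in the argument; the rest is linear algebra.
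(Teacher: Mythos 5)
Your argument is correct and is essentially the same as the paper's: take $P(x)=I+\sum_{k=1}^{\gamma}P_kx^k$, solve the ad-commutator equations with $A_0$ order by order (diagonal part absorbed into $\Lambda_k$, off-diagonal part determining $P_k$ via distinctness of the $\lambda_i$), set the higher coefficients of $P$ to zero, and read off the $O(x^{1+\gamma})$ remainder as $\gamma$-Borel summable. The only issue is a harmless sign slip: matching $A(x)P(x)-P(x)\Lambda(x)=O(x^{1+\gamma})$ actually gives $[A_0,P_k]=\Lambda_k-\Phi_k$, so $(P_k)_{ij}=-(\Phi_k)_{ij}/(\lambda_i-\lambda_j)$; the choice $\Lambda_k=\mathrm{diag}\,\Phi_k$, the solvability, and the remainder of your argument are unaffected.
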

		 It is known that we can take $P(x)$ with formal power series elements so that
			$B(x)=(b_{i,j}(x))$ is also diagonal (See \cite{Was}). We only have to stop at finite steps. 
	\begin{proof}	
		Let $Y=P(x)Z$. Then 
	\begin{equation*}
		\begin{aligned}
	 		x^{1+\gamma}\frac{dY}{dx}=x^{1+\gamma}(\frac{dP}{dx}Z+P(x)\frac{dZ}{dx})
	 		=A(x)P(x)Z.
		\end{aligned}
	\end{equation*}
		Suppose $x^{1+\gamma}\dfrac{dZ}{dx}=B(x)Z$. Then
	\begin{equation*}
		  x^{1+\gamma}P'(x)+P(x)B(x)=A(x)P(x).
	\end{equation*}
		Let $A(x)=\sum_{k=0}^{\infty}A_k x^k$, $B(x)=\sum_{k=0}^{\infty}B_k x^k$
		and $P(x)=\sum_{k=0}^{\gamma}P_k x^k$. We have 
	\begin{equation*}
		\begin{aligned}
			\sum_{k+l=m}A_lP_k-\sum_{k+l=m}P_kB_l&=0 \quad m\leq \gamma,  \\
			\sum_{k+l=m}A_lP_k-\sum_{k+l=m}P_kB_l&=(m-\gamma)P_{m-\gamma}
			 \quad m\geq \gamma+1.
		\end{aligned}
	\end{equation*}
		Let $P_0=Id$ and $B_0=A_0$. For $1\leq m \leq \gamma$
	\begin{equation*}
		\begin{aligned}
		P_mB_0-A_0P_m
		=\sum_{k=1}^{m-1}(A_{m-k}P_k-P_kB_{m-k})+A_m-B_m. 
			\end{aligned}
	\end{equation*}
			Let $P_m=(p_{m,i,j})_{1\leq i,j\leq n}$ and $B_{m}=(b_{m,i,j})_{1\leq i,j\leq n}$.
			It follows from $A_0=B_0=diag.(\lambda_1,\cdots, \lambda_n)$ with $\lambda_i\not=
			\lambda_j (i\not=j)$ that we can take $p_{m,i,j}\ (i\not =j)$ so that 
			$b_{m,i,j}=0 \ (i\not =j,\ 0\leq m\leq \gamma)$. $P_m=0$ for $m\geq \gamma+1$.
	\end{proof}
		 	Therefore we may study 
		\begin{equation}
		\begin{aligned}
		x^{1+\gamma}\frac{dy_i}{dx}={\lambda}_i(x)y_i+\sum_{j=1
		}^{n}a_{i,j}(x)y_{j} \quad
		 (1\leq i \leq n)  \label{Eq-start}
		\end{aligned}
	\end{equation}
		with $a_{i,j}(x)=O(x^{1+\gamma})$. Set $n\times n$ matrices 
		$\Lambda(x)=diag.(\lambda_1(x), \cdots.\lambda_n(x))$	and $A(x)=(a_{i,j}(x))$.
		Our aim is to transform \eqref{Eq-start} to a simpler form, that is,
		to the following linear system of equations	
	\begin{equation}
		  x^{1+\gamma}\frac{dz_i}{dx}={\lambda}_i(x)z_i \quad
			 (1\leq i \leq n),
			  \label{Eq-simple}
	\end{equation}
   	 by using Borel summable functions. \par
	  {\it Proof of Proposition \ref{prop-1}}.\; 
		Let $Y=(Id+C(x))Z$, where $C(x)=(C_{i,k}(x))$ is an $n\times n$ matrix with $C(0)=0$. Then
	\begin{equation*}
			\begin{aligned}\ &
				 x^{1+\gamma}\frac{dY}{dx}=x^{1+\gamma}(Id+C(x))\frac{dZ}{dx}+x^{1+\gamma}\frac{dC}{dx}Z \\ &
				 =(Id+C(x))\Lambda(x)Z+x^{1+\gamma}\frac{dC}{dx}Z
				=({\Lambda}(x)+A(x))(Id+C(x))Z.
			\end{aligned}
	\end{equation*}	
			The equation to solve is  		
	\begin{equation}
			\begin{aligned}
				(Id+C(x))\Lambda(x)+x^{1+\gamma}\frac{dC}{dx}=({\Lambda}(x)+A(x))(Id+C(x)),
			\end{aligned}
	\end{equation}
		 more precisely 	
	\begin{equation*} 
		\begin{aligned} \ &
		 x^{1+\gamma}C_{i,k}'(x)+
		\big(\delta_{i,k}+C_{i,k}(x)\big){\lambda_k}(x)\\ & =
		\lambda_{i}(x)(\delta_{i,k}+C_{i,k}(x))+ 
		\sum_{j=1}^{n}a_{i,j}(x)(\delta_{j,k}+C_{j,k}(x)).
		\end{aligned}\label{tr-eq2}
	\end{equation*}
	Thus we get a system of differential equations with $n^2$ unknown functions 
	$\{C_{i,k}(x);1\leq i,k\leq n\}$	
		\begin{equation} 
		\begin{aligned} 
		 x^{1+\gamma}C_{i,k}'(x) = &
		(\lambda_{i}(x)-\lambda_{k}(x))C_{i,k}(x)\\  & 
		+	\sum_{j=1}^{n}a_{i,j}(x)C_{j,k}(x)+a_{i,k}(x).
		\end{aligned}  \label{Eq-C}
	\end{equation} 
	   The aim is to show that 
		 $\{C_{i,k}(x)\}_{1\leq i\leq n}$ exist in some sectorial region and are
		$\gamma$-Borel summable functions.	
		We construct $C_{i,k}(x)$ by $\gamma$-Laplace integral
	\begin{equation}
		C_{i,k}(x)=\int_{0}^{\infty e^{i\theta}}
		 e^{-(\frac{\xi}{x})^{\gamma}}\widehat{C}_{i,k}(\xi)d\xi^{\gamma}.
	\end{equation}
		Set $\lambda_{i,k}=\lambda_i-\lambda_k$ and 
		$\lambda^*_{i,k}(x)=\lambda_i(x)-\lambda_k(x)-\lambda_{i,k}$. We have from \eqref{Eq-C}
	\begin{equation} 
		\begin{aligned} \ &
		 x^{1+\gamma}C_{i,k}'(x)- \lambda_{i,k}C_{i,k}(x)  = 
		\lambda^*_{i,k}(x)C_{i,k}(x)+ 
		\sum_{j=1}^{n}a_{i,j}(x)C_{j,k}(x)+a_{i,k}(x)
		\end{aligned}  \label{eq-1'}
	\end{equation}	
		and the following system of convolution equations		
	\begin{equation} 
		\begin{aligned} 
		 (\gamma \xi^{\gamma}- \lambda_{i,k})\widehat{C}_{i,k}(\xi) =
		\widehat{\lambda}_{i,k}^{*}(\xi)\underset{\gamma}{*}\widehat{C}_{i,k}(\xi)+ 
		\sum_{j=1}^{n}\widehat{a}_{i,j}(\xi)\underset{\gamma}{*}
		\widehat{C}_{j,k}(\xi)+\widehat{a}_{i,k}(\xi).
		\end{aligned}  \label{eq-conv}
		\end{equation}
	\begin{lem} 
			\begin{enumerate}
				\item[{\rm (1)}] There exists $R>0$ such that 
					$\xi^{\gamma-1}\widehat{\lambda}_{i,k}^{*}(\xi)$ and 
					$\xi^{\gamma-1}\widehat{a}_{i,j}(\xi)$ are holomorphic in 
					$\Xi=\{|\xi|<R\}\cup S(\widehat {I})$.
				\item[\rm{(2)}]
				For arbitrary small $\epsilon>0$
				there exist constants $M_{\epsilon}$ and $c_{\epsilon}$ such that
				\begin{equation}
					\begin{aligned}
						|\widehat{\lambda}_{i,k}^{*}(\xi)|, |\widehat{a}_{i,j}(\xi)| \leq \frac{
						M_{\epsilon}|\xi|^{1-\gamma}e^{c_{\epsilon}|\xi|^{\gamma}}}{\Gamma(1/\gamma)} ,\;
						|\widehat{a}_{i,j}(\xi)|\leq \frac{ M_{\epsilon}|\xi|e^{c_{\epsilon}|\xi|^{\gamma}}}
						{\Gamma((1+\gamma)/\gamma)}   
					\end{aligned}	\label{est-0'}
		  \end{equation}
		  in $\{0<|\xi|<R\}\cup S(\widehat {I}_{\epsilon})$.
			\end{enumerate}	\label{lem-conv}
		\end{lem}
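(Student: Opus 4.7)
The plan is to treat the two families of coefficients separately, since they arise from structurally different data. For $\lambda^*_{i,k}(x) = \lambda_i(x)-\lambda_k(x)-(\lambda_i-\lambda_k)$ the situation is elementary: by construction $\lambda^*_{i,k}$ is a polynomial in $x$ of degree $\leq \gamma$ vanishing at the origin, so its $\gamma$-Borel transform is the explicit finite sum $\sum_{n=1}^{\gamma} c_n \xi^{n-\gamma}/\Gamma(n/\gamma)$. I would verify directly that $\xi^{\gamma-1}\widehat{\lambda}^*_{i,k}(\xi)$ is a polynomial in $\xi$, hence entire, so part (1) for $\lambda^*_{i,k}$ holds on all of $\mathbb{C}$; the estimate in (2) is then trivial since on $\{0<|\xi|<R\}$ the finitely many terms are controlled by $|\xi|^{1-\gamma}$ times a constant, and on $S(\widehat{I}_\epsilon)$ the exponential factor $e^{c_\epsilon|\xi|^\gamma}$ absorbs any polynomial growth.

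For $a_{i,j}(x)$ I would rely on the general theory of $\gamma$-Borel summable functions recalled at the start of Section~1. Since $a_{i,j}\in \mathscr{O}_{\{1/\gamma\}}(S_0(I))$ with $a_{i,j}(0)=0$, formula \eqref{B-tr} supplies a convergent expansion $\widehat{a}_{i,j}(\xi)=\sum_{n\geq 1}\frac{a_{i,j,n}}{\Gamma(n/\gamma)}\xi^{n-\gamma}$ on some disk $\{|\xi|<R\}$, which shows that $\xi^{\gamma-1}\widehat{a}_{i,j}(\xi)$ extends holomorphically to $\xi=0$. Meanwhile \eqref{B-est} delivers holomorphy together with the exponential bound on $S(\widehat{I}_\epsilon)$. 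Combining these facts proves part (1) for $a_{i,j}$ and the weaker half of (2), namely $|\widehat{a}_{i,j}(\xi)|\leq M_\epsilon|\xi|^{1-\gamma}e^{c_\epsilon|\xi|^\gamma}/\Gamma(1/\gamma)$ throughout $\{0<|\xi|<R\}\cup S(\widehat{I}_\epsilon)$.

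The sharpened bound for $\widehat{a}_{i,j}$ is where the extra hypothesis $a_{i,j}(x)=O(x^{1+\gamma})$ is used. This forces $a_{i,j,n}=0$ for all $n\leq \gamma$, so the expansion actually begins at $n=1+\gamma$ and the lowest power of $\xi$ appearing in $\widehat{a}_{i,j}$ is $\xi^{(1+\gamma)-\gamma}=\xi$. Consequently $|\widehat{a}_{i,j}(\xi)|\leq C|\xi|$ as $\xi\to 0$. To promote this local estimate to the uniform bound required in (2), the plan is to split the region: on $\{|\xi|\leq R\}$ the Taylor expansion already gives $C|\xi|\leq C|\xi|\,e^{c_\epsilon|\xi|^\gamma}$, and on $\{|\xi|\geq R\}\cap S(\widehat{I}_\epsilon)$ the generic bound from \eqref{B-est} is rewritten as $|\xi|^{1-\gamma}e^{c_\epsilon|\xi|^\gamma}=|\xi|\cdot|\xi|^{-\gamma}e^{c_\epsilon|\xi|^\gamma}\leq R^{-\gamma}|\xi|\,e^{c_\epsilon|\xi|^\gamma}$. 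Taking the maximum of the two constants yields the single estimate with prefactor $|\xi|/\Gamma((1+\gamma)/\gamma)$. The only real subtlety is this matching of the local ($\xi\to 0$) behaviour to the sectorial ($|\xi|\to\infty$) one, but once the vanishing order of $\widehat{a}_{i,j}$ at the origin is pinned down by the $O(x^{1+\gamma})$ hypothesis, the rest is routine bookkeeping.
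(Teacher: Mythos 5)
Your proof is correct and follows the same route the paper takes: the paper's entire argument is the one-line remark that (1) is obvious and (2) follows from $\lambda^*_{i,k}(x)=O(x)$ and $a_{i,j}(x)=O(x^{1+\gamma})$, and your write-up simply fills in those details (explicit Borel transform of the polynomial $\lambda^*_{i,k}$, vanishing of the first $\gamma$ Taylor coefficients of $a_{i,j}$ forcing $\widehat{a}_{i,j}=O(\xi)$ near $0$, and the split between the bounded disk and the sector at infinity to match the two estimates).
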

		\begin{proof} The statement (1) is obvious and the estimate \eqref{est-0'} follows from
				${\lambda}_{i,k}^*(x)=O(x)$ and ${a}_{i,j}(x)=O(x^{1+\gamma})$.
		\end{proof}	
			Let us construct $\widehat{C}_{i,k}(\xi)=\sum_{m=1}^{\infty}\widehat{C}_{i,k}^m(\xi)$
			as follows:
			 	\begin{equation} 
			\begin{aligned} 
				(\gamma \xi^{\gamma}- \lambda_{i,k})\widehat{C}_{i,k}^1(\xi)=&
				\widehat{a}_{i,k}(\xi),  \\
				 (\gamma \xi^{\gamma}- \lambda_{i,k})\widehat{C}_{i,k}^m(\xi) = &
				\widehat{\lambda}_{i,k}^*(\xi)\underset{\gamma}{*}\widehat{C}_{i,k}^{m-1}(\xi)+ 
				\sum_{j=1}^{n}\widehat{a}_{i,j}(\xi)\underset{\gamma}{*}
				\widehat{C}_{j,k}^{m-1}(\xi)\;\; m\geq 2.
		\end{aligned}  \label{eq-2'}
	\end{equation}
		Let $\Xi_{\widehat {I}}=\{0< |\xi|<R\}\cup S(\widehat {I})$\; 
		$(|\gamma R^{\gamma}|<\min_{\{i\not = k\}}|\lambda_{i,k}|)$. 
		If $i\not=k$ and $\xi \in \Xi_{\widehat {I}}$, then  
		$\gamma \xi^{\gamma}-\lambda_{i,k}\not =0$ and 
  	$\widehat{C}_{i,k}^1(\xi)=\widehat{a}_{i,k}(\xi)/(\gamma\xi^{\gamma}-\lambda_{i,k})$.
		If $i=k$, then $\widehat{\lambda}_{i,k}^*(\xi)=0$ and
		$\widehat{C}_{k,k}^1(\xi)=\widehat{a}_{k,k}(\xi)/\gamma\xi^{\gamma}$.
		The following lemma holds.	
	\begin{lem} 
		There exist $\widehat{C}_{i,k}^m(\xi)\in {\mathscr O}(S(\widehat{I}))$ such that
	\begin{enumerate} 
		 \item[{\rm (1)\rm}] Let $\epsilon>0$ be an arbitrary small constant. Then there exist constants 
			$A_{\epsilon}$ and $c_{\epsilon}$  such that
		\begin{equation}
			\begin{aligned}
			|\widehat{C}_{i,k}^m(\xi)|
			\leq \frac{A_{\epsilon}^m
			|\xi|^{m-\gamma}}{\Gamma(m/\gamma)}e^{c_{\epsilon}|\xi|^{\gamma}}.\;\; 
			\xi \in \Xi_{\widehat {I}_{\epsilon}}=\{0< |\xi|<R\}\cup S(\widehat {I}_{\epsilon}).
			\end{aligned} \label{est-1}
		\end{equation} 
			\item[{\rm (2)\rm}] 
			$\xi^{\gamma-1}\widehat{C}_{i,k}^m(\xi)\in 
			{\mathscr O}(\{|\xi|<R \})$.
	\end{enumerate} 
	\end{lem}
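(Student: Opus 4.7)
The plan is to prove both (1) holomorphy and (2) the Gevrey-type estimate \eqref{est-1} simultaneously by induction on $m$, following the recursive definition \eqref{eq-2'}. The holomorphy statement (1) follows directly once we know that the denominator $\gamma\xi^{\gamma}-\lambda_{i,k}$ does not vanish on $\Xi_{\widehat{I}}$: for $i\neq k$ this is ensured by $\widehat{I}\cap \mathbf{\Theta}_1=\emptyset$ (so $\arg \xi\in\widehat{I}$ forces $\gamma\xi^{\gamma}\neq \lambda_{i,k}$) together with the choice $\gamma R^{\gamma}<\min|\lambda_{i,k}|$, while for $i=k$ we have $\gamma\xi^{\gamma}-\lambda_{i,i}=\gamma\xi^{\gamma}$, and the factor $\xi^{\gamma-1}$ in (1) absorbs the zero at the origin (using that the numerator vanishes to order at least $\gamma+1$ in $x$, hence its Borel transform is $O(|\xi|)$ via Lemma \ref{lem-conv}). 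The convolutions on the right-hand side of \eqref{eq-2'} preserve the sector $S(\widehat{I})$ and its restrictions $S(\widehat{I}_{\epsilon})$.

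For the estimate \eqref{est-1}, I would first establish a uniform lower bound: there exists $c_0>0$ with $|\gamma\xi^{\gamma}-\lambda_{i,k}|\geq c_0(1+|\xi|^{\gamma})$ for $\xi\in\Xi_{\widehat{I}_{\epsilon}}$ whenever $i\neq k$ (this uses the angular separation from $\mathbf{\Theta}_1$ at infinity and the fact that $|\lambda_{i,k}|>\gamma R^{\gamma}$ on the disk). The base case $m=1$ then splits: for $i\neq k$, dividing Lemma \ref{lem-conv}'s bound $|\widehat{a}_{i,k}(\xi)|\leq M_{\epsilon}|\xi|^{1-\gamma}e^{c_{\epsilon}|\xi|^{\gamma}}/\Gamma(1/\gamma)$ by $c_0$ gives \eqref{est-1} with $m=1$; for $i=k$, using instead the stronger bound $|\widehat{a}_{i,i}(\xi)|\leq M_{\epsilon}|\xi|e^{c_{\epsilon}|\xi|^{\gamma}}/\Gamma((1+\gamma)/\gamma)$ and dividing by $\gamma|\xi|^{\gamma}$ yields the same bound, thanks to the identity $\gamma\Gamma((1+\gamma)/\gamma)=\Gamma(1/\gamma)$.

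For the inductive step, assume \eqref{est-1} holds with constant $A_{\epsilon}$ at level $m-1$. I apply Lemma \ref{lem-prod} to each convolution on the right-hand side of \eqref{eq-2'}. When $i\neq k$, take $s_{1}=1$ (from the first bound of Lemma \ref{lem-conv}) and $s_{2}=m-1$; the lemma gives a product bound $\leq \mathrm{const}\cdot A_{\epsilon}^{m-1}|\xi|^{m-\gamma}e^{c_{\epsilon}|\xi|^{\gamma}}/\Gamma(m/\gamma)$, and dividing by $c_{0}(1+|\xi|^{\gamma})\geq c_{0}$ absorbs a factor; summing the $n+1$ terms and requiring $A_{\epsilon}$ to dominate the resulting constant closes the induction. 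When $i=k$, note that $\widehat{\lambda}_{i,i}^{*}=0$ so only the $\widehat{a}_{i,j}\underset{\gamma}{*}\widehat{C}_{j,i}^{m-1}$ terms remain; here I use the stronger bound $s_{1}=1+\gamma$ for $\widehat{a}_{i,j}$ combined with $s_{2}=m-1$ for $\widehat{C}_{j,i}^{m-1}$, yielding a convolution bound with $|\xi|^{m}/\Gamma((m+\gamma)/\gamma)$, which after division by $\gamma|\xi|^{\gamma}$ becomes $|\xi|^{m-\gamma}/(m\Gamma(m/\gamma))$ via $\gamma\Gamma(m/\gamma+1)=m\Gamma(m/\gamma)$; the extra factor $1/m$ is harmless for choosing $A_{\epsilon}$.

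The main obstacle is the case $i=k$, where the denominator $\gamma\xi^{\gamma}$ has no positive lower bound near the origin. Overcoming it requires systematically invoking the stronger of the two bounds on $\widehat{a}_{i,j}$ in Lemma \ref{lem-conv}, which is available precisely because $a_{i,j}(x)=O(x^{1+\gamma})$ after the preliminary polynomial transformation. One also must check at each step that the constants $c_{\epsilon}$ in the exponentials can be kept uniform in $m$, which is automatic since Lemma \ref{lem-prod} transmits the same $c$ from factors to convolution. All other steps are routine once the lower bound on $|\gamma\xi^{\gamma}-\lambda_{i,k}|$ and the careful bookkeeping of $\Gamma$-factors are in place.
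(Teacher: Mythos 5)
Your proposal is correct and follows essentially the same route as the paper's proof: induction on $m$, a case split on $i\neq k$ versus $i=k$, and for $i=k$ invoking the sharpened bound $|\widehat{a}_{i,j}(\xi)|\leq M_{\epsilon}|\xi|e^{c_{\epsilon}|\xi|^{\gamma}}/\Gamma((1+\gamma)/\gamma)$ (available because $a_{i,j}(x)=O(x^{1+\gamma})$) to cancel the vanishing denominator $\gamma\xi^{\gamma}$, with Lemma \ref{lem-prod} supplying the convolution estimates. You actually spell out two points the paper leaves implicit — the uniform lower bound on $|\gamma\xi^{\gamma}-\lambda_{i,k}|$ for $i\neq k$, and the $\Gamma$-function identity $\gamma\Gamma(m/\gamma+1)=m\Gamma(m/\gamma)$ that keeps the paper's constant $B_{\epsilon}$ independent of $m$ — but the argument is the same.
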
	
	\begin{proof}  
		We show \eqref{est-1} by induction. Let $m=1$.
		For $i\not=k$ inequality \eqref{est-1} holds. 
		Let $i=k$. \eqref{est-1} holds from the second estimate of \eqref{est-0'},
					 Assume \eqref{est-1} holds for $m-1$. 
			 Then there exists a constants $B_{\epsilon}$ such that 
	\begin{equation*} 
		\begin{aligned} \ &
		|\widehat{\lambda}_{i,k}^*(\xi)\underset{\gamma}{*}\widehat{C}_{i,k}^{m-1}(\xi)|, 
		|\widehat{a}_{i,j}(\xi)\underset{\gamma}{*}
		\widehat{C}_{j,k}^{m-1}(\xi)|\leq \frac{B_{\epsilon}A^{m-1}_{\epsilon}
		|\xi|^{m-\gamma}e^{c_{\epsilon}|\xi|^{\gamma}}}{\Gamma(m/\gamma)}. \\
		\end{aligned} 
	\end{equation*}	
		If $i\not=k$, $\gamma \xi^{\gamma}-{\lambda}_{i,k}\not =0$. Then estimate \eqref{est-1} for $m$ holds.
		If $i=k$, it follows from $\widehat{\lambda}_{i,k}^*(\xi)=0$ and 
		the second estimate of \eqref{est-0'} that
	\begin{equation*}
		\begin{aligned}
		|\widehat{a}_{k,j}(\xi)\underset{\gamma}{*}\widehat{C}_{j,k}^{m-1}(\xi)|
		\leq \frac{B_{\epsilon}A^{m-1}_{\epsilon}|\xi|^{m}e^{c_{\epsilon}|\xi|^{\gamma}}}{\Gamma(m/\gamma)}. 
		\end{aligned} 
	\end{equation*}
	ans the estimate \eqref{est-1} also holds for $m$. We have
	the statement (2) in the same way as above.  
	\end{proof}
	Thus we get
		\begin{prop}There exist $\{\widehat{C}_{i,k}(\xi)\}_{1\leq i,k \leq n}$ such that
		\begin{enumerate} 
			\item[{\rm (1)\rm}] $\widehat{C}_{i,k}(\xi)\in 
			{\mathscr O}(\Xi_{\widehat {I}})$, 
			$\xi^{\gamma-1}\widehat{C}_{i,k}(\xi)\in 
			{\mathscr O}(\{|\xi|<R\})$
			and $\{\widehat{C}_{i,k}(\xi)\}_{1\leq i ,k \leq n}$ satisfy convolution equations 
			\eqref{eq-conv}. 
			\item[{\rm (2)\rm}] For any small $\epsilon>0$ there are positive constants 
			$M_{\epsilon}$ and $c_{\epsilon}'$ such that
		\begin{equation}
			\begin{aligned}
			|\widehat{C}_{i,k}(\xi)|
			\leq \frac{M_{\epsilon}
			|\xi|^{1-\gamma}}{\Gamma(1/\gamma)}e^{c_{\epsilon}'|\xi|^{\gamma}}, \;
				\xi \in \Xi_{\widehat{I}_{\epsilon}}=\{\xi;0< |\xi|<R\}\cup S(\widehat {I}_{\epsilon}).
			\end{aligned}
		\end{equation} 	
			\end{enumerate}  
	\end{prop}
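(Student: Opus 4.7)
The plan is to define $\widehat{C}_{i,k}(\xi) := \sum_{m=1}^{\infty} \widehat{C}^m_{i,k}(\xi)$ with the iterates constructed in the preceding lemma, and then to verify in order: holomorphy and the exponential estimate (part (2)), and finally that the sum satisfies the convolution equation \eqref{eq-conv} (part (1)).

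First I would dominate the series using the per-term bound from the preceding lemma: for $\xi \in \Xi_{\widehat{I}_{\epsilon}}$,
$$\sum_{m=1}^{\infty}|\widehat{C}^m_{i,k}(\xi)| \leq |\xi|^{1-\gamma}e^{c_\epsilon |\xi|^\gamma}\sum_{m=1}^{\infty}\frac{A_\epsilon^m|\xi|^{m-1}}{\Gamma(m/\gamma)}.$$
The remaining series is essentially a Mittag--Leffler function of order $1/\gamma$, and its standard growth estimate gives a bound of the form $K(1+|\xi|^{\gamma-1})\exp(A_\epsilon^\gamma |\xi|^\gamma)$. Absorbing the polynomial factor into a slightly larger exponential constant $c'_\epsilon$ produces the estimate asserted in part (2). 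Because this domination is uniform on compact subsets, the sum defines a holomorphic function on $\Xi_{\widehat{I}_{\epsilon}}$, and since every $\xi^{\gamma-1}\widehat{C}^m_{i,k}(\xi)$ was shown holomorphic near $\xi=0$, so is the corresponding sum. Taking the union over $\epsilon>0$ establishes the holomorphy statement of part (1).

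For the convolution equation, I would sum the recursion \eqref{eq-2'} over all $m\geq 1$. The equation for $m=1$ supplies the forcing term $\widehat{a}_{i,k}$; for $m\geq 2$, relabeling the dummy index on the right as $m':=m-1$ and exchanging the convolution with the infinite sum (justified by Lemma \ref{lem-prod} together with the uniform Mittag--Leffler domination just obtained) yields
$$(\gamma\xi^\gamma - \lambda_{i,k})\widehat{C}_{i,k}(\xi) = \widehat{a}_{i,k}(\xi) + \widehat{\lambda}^*_{i,k}(\xi)\underset{\gamma}{*}\widehat{C}_{i,k}(\xi) + \sum_{j=1}^{n}\widehat{a}_{i,j}(\xi)\underset{\gamma}{*}\widehat{C}_{j,k}(\xi),$$
which is exactly \eqref{eq-conv}.

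The only point requiring care is the Mittag--Leffler bound itself, which must hold uniformly in $\arg\xi$ throughout $S(\widehat{I}_\epsilon)$ rather than merely along the positive real axis. I would secure it by an elementary Stirling argument: Stirling gives $\Gamma(m/\gamma) \geq c_0 (m/(\gamma e))^{m/\gamma}$, so each term of the series is bounded by $\bigl(A_\epsilon|\xi|(\gamma e/m)^{1/\gamma}\bigr)^m$ up to a polynomial prefactor; this is maximized near $m\sim \gamma(A_\epsilon|\xi|)^\gamma$ where it equals $e^{(A_\epsilon|\xi|)^\gamma}$, and a Gaussian-type width comparison around the maximum leaves only polynomial overhead, harmlessly absorbed into $c'_\epsilon$. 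With that estimate in hand, the rest of the argument is routine bookkeeping.
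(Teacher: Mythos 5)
Your proposal is correct and takes essentially the same route as the paper: define $\widehat{C}_{i,k}=\sum_{m\geq 1}\widehat{C}^m_{i,k}$, use the per-term bound $|\widehat{C}^m_{i,k}(\xi)|\leq A_\epsilon^m|\xi|^{m-\gamma}e^{c_\epsilon|\xi|^\gamma}/\Gamma(m/\gamma)$ from the preceding lemma, recognize the dominating series as Mittag--Leffler of order $1/\gamma$ to absorb it into $M_\epsilon|\xi|^{1-\gamma}e^{c'_\epsilon|\xi|^\gamma}$, and pass to the limit in the recursion \eqref{eq-2'}. The paper simply asserts the final inequality; you supply the Stirling argument behind it (and your worry about uniformity in $\arg\xi$ is moot, since the series depends only on $|\xi|$ and $A_\epsilon|\xi|\geq 0$ lies on the positive real axis where the Mittag--Leffler exponential estimate holds).
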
	
		\begin{proof} We have 
	$\{\widehat{C}_{i,k}^m(\xi)\}_{m=1,2,	\cdots} $ with \eqref{est-1}. Then 
	there exist constants $M_{\epsilon},c_{\epsilon}$ and $c_{\epsilon}'$ such that
	\begin{equation*}
		\begin{aligned}
		\sum_{m=1}^{\infty}
		|\widehat{C}_{i,k}^m(\xi)|
		\leq \sum_{m=1}^{\infty}\frac{A^{m}_{\epsilon}|\xi|^{m-\gamma}}{\Gamma(m/\gamma)}
		e^{c_{\epsilon}|\xi|^{\gamma}}\leq M_{\epsilon}|\xi|^{1-\gamma} e^{c_{\epsilon}'\xi|^{\gamma}}.
		\end{aligned} 
	\end{equation*}
	Hence $\widehat{C}_{i,k}(\xi)=\sum_{m=1}^{\infty}\widehat{C}_{i,k}^m(\xi)\;
	(1\leq i,k \leq n)$ converge and satisfy \eqref{eq-conv}. 
	\end{proof}	
	  Let us define for $\theta \in I_{\epsilon}=(\theta_*-\epsilon, \theta_*+\epsilon)$ 
	\begin{equation}
		\begin{aligned}
		C_{i,k}(x)=\int_{0}^{e^{\i \theta}\infty} e^{-(\frac{\xi}{x})^{\gamma}}
		\widehat{C}_{i,k}(\xi)d\xi^{\gamma}.
		\end{aligned}
	\end{equation}
		Thus we obtain a linear transformation $Y=(I+C(x))Z$  and Proposition \ref{prop-1} is shown. 
	

\begin{thebibliography}{99}
	  \bibitem{Bal} Balser. W., Formal power series and linear systems of
	   			meromorphic ordinary differential equations,
				 Universitext, Springer, 2000.
	  \bibitem{Braak} Braaksma, B.L.J. Multisummability of formal power series solutions
	  		of nonlinear meromorphic differentia equations, Ann.Inst.Fourier Grenoble,
	  		42 (1992), 517-540
	  \bibitem{Bra-Kuik} Braaksma, B.L.J, Kuik, R. Resurgence relations for classes of differential and 
	  		difference equations. Ann. Fac. Sci. Toulouse Math. (6) 13 (2004), no. 4, 479–492.	 .		
	  \bibitem{BBRS} Baler, W, Braaksam, B.L.J. Ramis, J.P. Sibuya, Y. 
	  		Multisummability of formal power series solutions of linear ordinary differential 
	  		equations, Asymptotic Anal. 5 (1991), 27-45.	  		
	  \bibitem{Cos-1} Costin,O. On Borel summation and Storkes phenomena for rank-1 nonlinear systems
	  		 of ordinary differential equations Duke Math.,J 93-2 (1998), 289-344.
		\bibitem{Cos-2}	Costin,O, Costin,R.D. 
			On the formulation of singularities of solutions of 
			nonlinear differential systems in antistokes direction, Invent. math 145 (2001) 425-485
		\bibitem{Cos-3} Costin,R.D, Truncated solutions of Panilev\'{e} equations $P_V$. Summetry. Integrability
				and Geometry: Methods and Applications, SIGM 14 (2018) 117, 14 pages	
		\bibitem{GT}	G\'{e}rard, R, Tahara, H, Singular Nonliear Partial Differential Equations,
				Aspect of Mathematics, Vieweg 1996
	  \bibitem{Huk-1} Hukuhara,M. Int\'{e}gration formelle d'un syst\'{e}me d'\'{e}quations 
	  		diff\'{e}rentielles non lin\'{e}aires dans le voisinage d'un point singulier,
	  		 Annali di Mathematica pura ed Applieaia,
	  		19 (1940) 35-44. 
	  \bibitem{Iwano-1} Iwano, M., Int\'{e}gration analytique d'un syst\`{e}me d'\'{e}quations 
				diff\'{e}rntielles non lin\'{e}aires dans le voisnage d'un point singulier, I. Ann. Mat. 
				Pura Appl., 44 (1957) 261-292
		\bibitem{Iwano-2} Iwano, M., Int\'{e}gration analytique d'un syst\`{e}me d'\'{e}quations 
				diff\'{e}rntielles non lin\'{e}aires dans le voisnage d'un point singulier, II. Ann. Mat. 
				Pura Appl., 47 (1959) 91-150		
		\bibitem{Malm-1} Malmquist, J., Sur l'\'{e}tude analytique des solutions d'un syst\`{e}me 
				d'\'{e}quations dans le voisinage d'un point sigulier d'ind\'{e}termination, 
				I, Acta Math., 73 (1941), 87-129.
		\bibitem{Ouchi-0}	\={O}uchi, S., Borel summability of formal solutions of some first order singular
		  partial differential equations and normal formal forms of vector fields, J. Math. Soc. Japan, 57 
		  (2005) 415-460. 
		\bibitem{Ouchi} \={O}uchi, S., Multisummability of formal solutions of nonlinear 
			partial differential equations in complex domains. Asymptotic Analysis, 
			47 (2006) 187-225. 		
		\bibitem{Trj} Trjitzinsky, W.J., Theory of non-linear singular differential systems, Trans.
				Amer, Math. 42 (1937), 225-321.  		
		\bibitem{Was} Wasow, W., Asymptotic expansions for ordinary differential equations, Reprint of
		  the 1976 edition. Dover Publications, 1987. 					
	\end{thebibliography}
\end{document}